\documentclass[11pt]{amsart}
\setlength{\oddsidemargin}{-0in}
\setlength{\evensidemargin}{-0in}
\setlength{\textwidth}{6.5in}

\pagestyle{headings}

\allowdisplaybreaks[4]

 
\usepackage{amsmath, amsthm, amscd, amssymb,amsfonts}
\usepackage{color}
\usepackage{mathrsfs}
\usepackage[all]{xy}
\usepackage{hyperref}
\usepackage{enumitem}

\usepackage{varwidth}

\numberwithin{equation}{section}

\newtheorem{thm}{Theorem}[section]

\newtheorem{cor}[thm]{Corollary}
\newtheorem*{cor*}{Corollary}
\newtheorem{lem}[thm]{Lemma}

\newtheorem*{con*}{Conjecture}
\newtheorem*{prob*}{Problem}

\theoremstyle{definition}
\newtheorem{defn}[thm]{Definition}

\theoremstyle{remark}
\newtheorem{rem}[thm]{Remark}

\newcommand{\ds}{\displaystyle}

\usepackage{cite}

\begin{document}
\title{Topological K-Homology for Non-proper Actions and Assembly Maps}

\author[Kun Wang]{Kun Wang}
\address{Beijing Institute of Technology, Zhuhai Campus, 
Zhuhai, Guangdong, China}
\email{20381@bitzh.edu.cn}

\begin{abstract}
For a countable discrete group $G$, we construct a new and concrete model for the equivariant topological $K$-homology theory of $G$, which is defined for \textit{all} $G$-actions, not just for proper $G$-actions. 
The construction of our model combines some of the ideas from the localization algebra approach to the coarse Baum-Connes conjecture and the controlled algebra approach to the Farrell-Jones conjecture. It brings new ideas into the study of the Baum-Connes conjecture. First, as the model is defined for all $G$-actions, we are able to define relative assembly maps.  We prove, by using an induction structure of our theory, a transitivity principle that can be used to verify when a relative assembly map is an isomorphism. This promotes the study of the original assembly map relative to the family of finite subgroups to assembly maps relative to any family of subgroups of a group.  Second, our new model is suitable for the use of the method of controlled topology and controlled algebra, which plays an important role in proving the coarse Baum-Connes conjecture and the Farrell-Jones conjecture for a large class of groups.

\smallskip
\noindent \textbf{Keywords.} Topological $K$-homology, Assembly map, Baum-Connes conjecture
\end{abstract}

\maketitle
\section{Introduction}

For a discrete group $G$, the corresponding topological $K$-groups $K_*(C_r^*(G))$, algebraic $K$- and $L$-groups $K_*(\mathbb Z[G]), L_*(\mathbb Z[G])$ are important objects of study in topology, geometry and analysis, where $C_r^*(G)$ and $\mathbb Z[G]$ are the reduced group $C^*$-algebra and the integral group ring of $G$ respectively. The structures of these groups are predicted by the Baum-Connes conjecture and the Farrell-Jones conjecture respectively. The assembly maps which formulate these two conjectures have deep meanings in topology, geometry and analysis, so that they have many important implications in these areas. For example, they imply the Borel conjecture on topological rigidity of closed aspherical manifolds and the Novikov conjecture on homotopy invariance of higher signatures,  see \cite{LR} for a detailed account about these two conjectures and their applications.

The formulations of these conjectures rely on certain equivariant homology theories. For Baum-Connes, it is equivariant topological $K$-homology theory and for Farrell-Jones, they are equivariant algebraic $K$- and $L$-homology theories. In \cite{DL}, Davis and L{\"u}ck give a unified, but abstract, approach to the constructions of these equivariant homology theories using homotopy theoretic methods.  The theories so constructed are defined for \textit{all} $G$-actions, not just for proper $G$-actions. However, this abstract homotopy theoretic approach is not convenient for the actual proof of the conjectures. In \cite{BFJR}\cite{BR}, the authors construct concrete models for equivariant algebraic $K$- and $L$-homology theories using controlled algebra method. These concrete models are defined for \textit{all} $G$-actions as well and have been used for proving the Farrell-Jones conjecture for a large class of groups \cite{BLR2}\cite{BL2}\cite{BLRR}\cite{BFL}\cite{BB}\cite{FW1}\cite{WC1}\cite{WC2}. The fact that these models are defined for \textit{all} $G$-actions plays a crucial role for the proofs, as it enables one to define \textit{relative assembly maps} and therefore promote the study of the original assembly maps to assembly maps relative to any family of subgroups of $G$. Now for an arbitrary family of subgroups of $G$, geometric conditions have been formulated in \cite{BLR2}\cite{BL2} so that the corresponding assembly maps can be verified to be isomorphisms through their concrete models and therefore prove the Farrell-Jones conjecture for $G$ via relative assembly maps.


However, the concrete models \cite{KAS1}\cite{BCH}\cite{BHS}\cite{WY} constructed so far for equivariant topological $K$-homology theory are only defined for proper group actions, not for all group actions, and this  obstructs the construction of relative assembly maps. Therefore, in order to carry the general strategy used to prove the Farrell-Jones conjecture over to the study of the Baum-Connes conjecture, it is desirable to have a concrete model for equivariant topological $K$-homology theory which is defined for \textit{all} group actions, and the construction of such a model is the main purpose of the paper (the construction of such a concrete model is certainly of its own interest as well). Our construction of such a model is very concrete and is  suitable for the use of the method of controlled topology and controlled algebra, which plays an important role in proving the coarse Baum-Connes conjecture and the Farrell-Jones conjecture for some important classes of groups\cite{YGL3}\cite{BLR2}\cite{BL2}.

The paper is organized as follows. In section \ref{basic_constructions_and_main_results}, we outline the basic constructions, define the groups $LK^G_*(X)$ for any $G$-space $X$ and state the main result (Thereom \ref{eqkhomology}), that $LK^G_*$ is a model for the equivariant topological $K$-homology theory of $G$. We also define \textit{relative assembly maps} (Definition \ref{relative_assembly_map}) in the context of our $LK^G_*$-theory and state the \textit{transitivity principle} (Theorem \ref{transitivity_principle}). In section \ref{lemmas}, we prove various lemmas that will be needed for the proof of the main result, and then in section \ref{proof_of_the_main_theorem}, we prove Theorem \ref{eqkhomology}. In section \ref{induction}, we show, for any subgroup $H<G$, there is an \textit{induction structure} (Theorem \ref{induction_str}) from $LK^H_*$ to $LK^G_*$. Finally in section \ref{transitivity}, by using the induction structure, we prove the transitivity principle. We remark that our construction will also work for equivariant topological $K$-homology theory with coefficients, but in oder to keep the treatment a bit cleaner, we do not consider it here.

\section{Basic Constructions and Main Results}\label{basic_constructions_and_main_results}

In this section, we outline the basic constructions and state the main results. All claims in this section are stated without proofs and will be justified in later sections.

\subsection{The algebra $C^*(G\times X)$}
    Let $\mathcal H$ be a separable infinite dimensional complex Hilbert space, and $X$ be a set. Denote by $\mathcal H_X$  the direct sum $\bigoplus_X\mathcal H$, i.e.
    $$\mathcal H_X: =\{v: X\rightarrow\mathcal H\ |\ \sum_{x\in X} \parallel v(x)\parallel^2<\infty\}$$
where $\sum_{x\in X} \parallel v(x)\parallel^2<\infty$ means the net $\{a_F=\sum_{x\in F}\parallel v(x)\parallel^2\ :\ F\subseteq X, F\ \text{is finite}\}$ converges to a finite number. $\mathcal H_X$ is a Hilbert space in the obvious way.  If $A\subseteq X$ is a subset, then $\mathcal H_A$ is naturally identified with a subspace of $\mathcal H_X$. If $x\in X$, then we denote $\mathcal H_{\{x\}}$ by $\mathcal H_x$.

    Let $X, Y$ be two sets. For any bounded linear operator  $T: \mathcal H_X\rightarrow\mathcal H_Y$, and any $x\in X, y\in Y$, its\textit{ $(y, x)$-component}, denoted by $T_{y, x}$,  is defined to be the composition $\pi_{y}\circ T\circ\iota_x: \mathcal H\rightarrow\mathcal H$, where $\iota_x:\mathcal H\rightarrow \mathcal H_X$ is the inclusion of $\mathcal H$ into the $x$-th summand of $\mathcal H_X$, and $\pi_{y}:  \mathcal H_Y\rightarrow \mathcal H$ is the orthogonal projection of $\mathcal H_Y$ onto its $y$-th summand. In this way, we can think of $T$ as a matrix $(T_{y, x})$ indexed by $Y\times X$, with each entry a bounded operator on $\mathcal H$.

    The \textit{support} of $T$,  denoted by supp$(T)$, is defined to be the following subset of $Y\times X$:
    $$\text{supp}(T):=\{(y, x)\in Y\times X\ |\ T_{y, x}\neq 0\}.$$
$T$ is called \textit{locally compact} if $T_{y, x}$ is a compact operator on $\mathcal H$ for any $y\in Y, x\in X$. $T$ is called \textit{row finite} if for every $y\in Y$, there are only finitely many $x\in X$ so that $T_{y, x}\neq 0$. Similarly, we can define \textit{column finite} operators. If $\phi: X\longrightarrow Y$ is a map, then we call $T$ \textit{covers} $\phi$ if $\text{supp}(T)\subseteq\{(\phi(x), x)\in Y\times X\ |x\in X\}$.
    
     Let $G$ be a discrete group.  If $X$ is a left $G$-set, then there is a left $G$-action $U^X$ on $\mathcal H_X$ via
     $$(U^X_gv)(x):=v({g^{-1}x}),\ \ g\in G, v\in\mathcal H_X, x\in X.$$
     If $X$ and $Y$ are two left $G$-sets, then there is a left $G$-action 
     $L$ on $B(\mathcal H_X, \mathcal H_Y)$, the space of all bounded linear operators from $\mathcal H_X$ to $\mathcal H_Y$, via
     $$L_gT=U^Y_g\circ T\circ U^X_{g^{-1}}.$$
     In terms of components, we have 
     $$(L_gT)_{y, x}=T_{g^{-1}y,\ g^{-1}x}.$$
     $T$ is called \textit{$G$-invariant} if $L_gT=T$ for all $g\in G$. In terms of components, this means 
     $$T_{gy,\ gx}=T_{y, x}$$ for all $g\in G, x\in X, y\in Y$.

\begin{defn}\label{Roealg}
	 Let $G$ be a discrete group and $X$ be a left $G$-set. Endow $G\times X$ with the diagonal  $G$-action ($G$ acts on $G$ by left translations).  Define $C[G\times X]$ to be the set of all bounded linear  operators $T:\mathcal H_{G\times X}\rightarrow\mathcal H_{G\times X}$ with the following properties: 
\begin{enumerate}
	\item $T$ is $G$-invariant;
	\item $T$ is locally compact;
    \item $T$ is $G$-finite: there exists a finite subset $F\subseteq X$ so that supp$(T)\subseteq G_F\times G_F$, where $G_F:=\{(g, gx)\in G\times X\ |\ g\in G, x\in F\}$;
    \item $T$ has finite propagation in the $G$-direction: there exists a finite subset $S\subseteq G$,  so that if $T_{(g', x'), (g, x)}\neq 0$, then $g^{-1}g'\in S$.
\end{enumerate}
When $X$ is the one-point space, we denote $C[G\times X]$ by $C[G]$.
\end{defn}

    It is easy to see that elements of $C[G\times X]$ are row and column finite.  We shall see in Lemma \ref{lemstr1} that $C[G\times X]$ is a $*$-subalgebra of $B(\mathcal H_{G\times X})$, the $C^*$-algebra of all bounded linear operators on $\mathcal H_{G\times X}$. 
    
\begin{defn}\label{Completed_Roealg} Define $C^*(G\times X)$ to be the norm closure of $C[G\times X]$ in $B(\mathcal H_{G\times X})$. When $X$ is the one-point space, we denote $C^*(G\times X)$ by $C^*(G)$.
\end{defn}   

    We will see in Lemma \ref{lemstr2} that $C^*(G\times X)$ is stably equivalent to $C^*_r(G)$. In particular, they have the same $K$-theory. Therefore the $K$-theory of  $C^*(G\times X)$ is independent of the $G$-space $X$.

\subsection{The localization algebra $C^*_L(G\times X)$}
    Let $X$ be a topological space with a left $G$-action by homeomorphisms. We now use Yu's localization algebra technique \cite{YGL1}\cite{YGL3} to construct $C^*$-algebras that depend on the topology of $X$. However, in order to make the definition work for all topological spaces, not just for metric spaces, we replace the metric control condition in Yu's definition by the \textit{equivariant continuous control condition} which is used in the controlled algebra approach to the Farrell-Jones conjecture \cite{BFJR}. Here, we have to slightly modify it so that it adapts to the localization algebra technique.

\begin{defn}\label{ecc}
	Let $G$ be a discrete group and $X$ be a topological space with a left $G$-action by homeomorphisms. A subset $E\subseteq X\times X\times [0, \infty)$ is called \textit{equivariantly continuously controlled at infinity} if the following holds:
	\begin{enumerate}
		\item for every $x\in X$ and every $G_x$-invariant open neighborhood $U$ of $x$ in $X$, there exist $N>0$ and a $G_x$-invariant open neighborhood $V\subseteq U$ of $x$, so that if $t>N$ and $(x_1, x_2, t)\in E$, then one of $x_1,x_2$ lies in $V$ implies the other lies in $U$;
		\item $E$ is symmetric: if $(x_1, x_2, t)\in E$, then $(x_2, x_1, t)\in E$;
		\item $E$ is $G$-invariant: if $(x_1, x_2, t)\in E$, then $(gx_1, gx_2, t)\in E$ for all $g\in G$;
		\item $\Delta\subseteq E$, where $\Delta=\{(x, x, t)\ |\ x\in X, \ t\ge 0\}$ is the diagonal.
    \end{enumerate}
    Let $\mathscr E^G_X$ denote the set of all subsets of $X\times X\times[0, \infty)$ that are equivariantly continuously controlled at infinity.
\end{defn}
    
    Intuitively, if $E\in\mathscr E^G_X$, then $(x_1, x_2, t)\in E$ implies $x_1, x_2$ are very close when $t$ is very large. 
    

\begin{defn}\label{localalg}
	Define $C_L[G\times X]$ to be the set of all bounded, uniformly continuous functions 
    $$f: [0, \infty)\rightarrow C[G\times X]$$ 
    with the property that  
    there exist a finite subset $S\subseteq G$, a compact subset $K\subseteq X$, a $G$-invariant countable subset $\Lambda\subseteq X$, and $E\in\mathscr E^G_X$ so that $f$ is \textit{$(S, K, \Lambda, E)$-controlled}: for any $t\in[0, \infty)$,  if $f(t)_{(g', x'), (g, x)}\neq 0$, then $g^{-1}g'\in S$, $x, x'\in\Lambda$,  $g^{-1}x, (g')^{-1}x'\in K$ and  $(x', x, t)\in E$. 
\end{defn}

    An element of $C_L[G\times X]$ may be thought as a family of $G\times X$ by $G\times X$ matrices, whose entries are compact operators on $\mathcal H$, parametrized by $t\in[0,\infty)$, and this family of matrices has to satisfy some additional control conditions. Lemma \ref{localalglem} in the next section shows that $C_L[G\times X]$ is a $*$-subalgebra of $\mathcal{BU}\big([0, \infty), C^*(G\times X)\big)$,  the $C^*$-algebra of all bounded and  uniformly continuous functions from $[0, \infty)$ to $C^*(G\times X)$ with the sup norm $$\parallel f\parallel:=\sup_{t\ge 0}\parallel f(t)\parallel,\ \ f\in \mathcal{BU}\big([0, \infty), C^*(G\times X)\big).$$


\begin{defn}\label{localizationalgebra}
	The \textit{localization algebra} (with $G$-compact support) of a $G$-space $X$, denoted by $C^*_L(G\times X)$, is defined to be the $C^*$-algebra which is the norm closure of $C_L[G\times X]$ in $\mathcal{BU}\big([0, \infty), C^*(G\times X)\big)$. 
\end{defn}



\subsection{The equivariant $K$-homology theory $LK^G_*$}    

To state the main theorem, we will further need the following definition:
    
\begin{defn}\label{local_alg_subspace}
    Let $A$ be a $G$-invariant subspace of $X$. We define $C_L[G\times X; A]$ to be the subset of $C_L[G\times X]$ consisting of all $f$ so that $f(t)_{(g', x'), (g, x)}\neq 0$ implies $x, x'\in A$. We also define $C_L[G\times X; \langle A\rangle]$ to be the subset of $C_L[G\times X]$ consisitng of all $f$ so that there exist $E\in \mathscr E^G_X$ and compact $K\subseteq X$ with the following property: if $f(t)_{(g', x'), (g, x)}\neq 0$, then $g^{-1}x, (g')^{-1}x'\in K$ and there exist $a, a'\in A$ so that $g^{-1}a, (g')^{-1}a'\in K$ and $(a, x, t), (a', x', t)\in E$. Let $C^*_L(G\times X; A)$ and $C^*_L(G\times X; \langle A\rangle)$ be their closures in $C^*_L(G\times X)$.    
\end{defn}    
     
     By Lemma \ref{ideal} in the next section, $C^*_L(G\times X; A)$ is a $C^*$-subalgebra of $C^*_L(G\times X)$ and $C^*_L(G\times X; \langle A\rangle)$ is an ideal of $C^*_L(G\times X)$. $C^*_L(G\times X; A)\subseteq C^*_L(G\times X; \langle A\rangle)$ and they have isomorphic $K$-groups.

\begin{defn}\label{relative}
	Let $A$ be a $G$-invariant subspace of $X$. For $i=0, 1$, we define 
	$$LK^G_i(X, A)=K_i\big(C^*_L(G\times X)/C^*_L(G\times X; \langle A\rangle)\big).$$ In particular, $LK^G_i(X)=LK^G_i(X, \emptyset)=K_i\big(C^*_L(G\times X)\big)$.
\end{defn}

    We are now ready to state the main theorem:

\begin{thm}\label{eqkhomology}
	Let $G$ be a countable discrete group. Then $LK^G_*$  is a $G$-equivariant homology theory (with $G$-compact support) on the category of $G$-$CW$-pairs. More precisely, we have:
\begin{enumerate} 
	\item Functoriality:
	     every $G$-equivariant continuous map $\phi: (X,A)\rightarrow (Y, B)$ between $G$-$CW$-pairs induces a homomorphism
         $$\phi_*: LK^G_*(X, A)\longrightarrow LK^G_*(Y, B) $$
         so that $id_*=id$ and $(\phi\circ\psi)_*=\phi_*\circ\psi_*$;
    \item Exactness:
         for every $G$-$CW$-pair $(X, A)$, there is a six-term cyclic exact sequence 
         $$\xymatrix{
         	\cdots\ar[r]& LK^G_*(A)\ar[r]^{j_*}& LK^G_*(X)\ar[r]^{k_*\ }& LK^G_*(X, A)\ar[r]^{\partial}& LK^G_{*-1}(A)\ar[r] &\cdots}$$
         which is natural, where $j: A\rightarrow X, k: X\rightarrow (X, A)$ are inclusions;
    \item $G$-homotopy invariance:
    if two $G$-equivariant continuous maps $\phi, \psi: (X, A)\rightarrow (Y, B)$ are $G$-equivariantly homotopic, then $\phi_*=\psi_*$;
    \item Excision:
         for every $G$-$CW$-triad $(X, A, B)$ with $X=A\cup B$, 
         the inclusion $j: (A, A\cap B)\rightarrow(X, B)$ induces an isomorphism
         $$j_*: LK^G_*(A, A\cap B)\tilde\longrightarrow LK^G_*(X, B);$$
    \item Additivity:
         if $X=\coprod_{\alpha\in I} X_\alpha$, a disjoint union of a family of $G$-CW-complexes, then the inclusions $j_\alpha: X_\alpha\longrightarrow X$ induce an isomorphism
         $$\bigoplus_{\alpha\in I}j_\alpha: \bigoplus_{\alpha\in I}LK_*^G(X_\alpha)\tilde\longrightarrow LK^G_*(X).$$   
    Moreover, $LK^G_*$ satisfies:
    
    \vskip 5pt
    \item Value at $G/H$: $LK^G_*(G/H)\cong K_*\big(C^*_r(H)\big)$ for every subgroup $H<G$. In particular, 
    $LK^G_*(\{pt\})\cong K_*\big(C^*_r(G)\big)$.
\end{enumerate}
\end{thm}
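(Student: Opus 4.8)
The plan is to derive all six properties from structural features of the algebras $C^*_L(G\times X)$, their ideals $C^*_L(G\times X;\langle A\rangle)$, and the associated quotients, feeding these into the standard $K$-theory machinery (six-term sequences, continuity, homotopy invariance of $K_*$). Most of the properties are then formal; the two that require genuinely new work in the present continuous-control setting are functoriality together with $G$-homotopy invariance, which I would treat by a single covering-isometry argument, and the identification of the ideal's $K$-theory with $LK^G_*(A)$, which underlies both exactness and excision. I expect making covering isometries respect the condition of Definition \ref{ecc}, and this ideal identification, to be the principal technical hurdles.

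For functoriality, given a $G$-map $\phi\colon(X,A)\to(Y,B)$ I would construct a uniformly continuous family of $G$-equivariant isometries $V\colon[0,\infty)\to B(\mathcal H_{G\times X},\mathcal H_{G\times Y})$ whose supports are asymptotically concentrated on the graph of $\phi$, in the sense that $\mathrm{supp}(V_t)$ is $E_t$-close to $\{((g,\phi(x)),(g,x))\}$ for some $E_t\in\mathscr{E}^G_Y$ that tightens as $t\to\infty$. Conjugation $f\mapsto VfV^*$ then carries $C^*_L(G\times X)$ into $C^*_L(G\times Y)$ while preserving the control data and sends the ideal for $A$ into that for $B$, so it descends to the quotients and defines $\phi_*$. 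Existence uses that $\mathcal H$ is infinite-dimensional together with $G$-equivariance over orbits, and the functor laws and independence of the choice of $V$ follow from the usual fact that any two such covering isometries are joined by a uniformly continuous path of covering isometries (a rotation homotopy), so they agree on $K$-theory. $G$-homotopy invariance is then obtained by applying the same construction to a $G$-homotopy $X\times[0,1]\to Y$ and using that the two endpoint inclusions induce the same map, equivalently that the projection $X\times[0,1]\to X$ induces an isomorphism on $LK^G_*$. The delicate point is that the isometries must move supports compatibly with Definition \ref{ecc}, i.e. controlled relative to $G_x$-invariant neighborhoods rather than a metric.

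Exactness is the six-term $K$-theory sequence of the short exact sequence
$$0\to C^*_L(G\times X;\langle A\rangle)\to C^*_L(G\times X)\to C^*_L(G\times X)/C^*_L(G\times X;\langle A\rangle)\to 0,$$
whose middle and right $K$-groups are $LK^G_*(X)$ and $LK^G_*(X,A)$ by Definition \ref{relative}. The essential step is to identify $K_*\big(C^*_L(G\times X;\langle A\rangle)\big)$ with $LK^G_*(A)=K_*(C^*_L(G\times A))$ compatibly with $j_*$. By Lemma \ref{ideal} and the remark following Definition \ref{local_alg_subspace} it suffices to compare $C^*_L(G\times A)$ with $C^*_L(G\times X;A)$, the operators supported on $A$; since $A$ is a $G$-$CW$-subcomplex one can match the control data $\mathscr{E}^G_A$ with the restriction of $\mathscr{E}^G_X$ and the $G$-compact supports, and show the natural inclusion is a $K$-equivalence. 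Granting this, excision for $X=A\cup B$ follows by a second-isomorphism argument: one shows $C^*_L(G\times X)=C^*_L(G\times X;A)+C^*_L(G\times X;\langle B\rangle)$ with intersection governed by $A\cap B$, so that $j$ induces an isomorphism of the two relative quotient algebras; the care needed is again the behaviour of supports near $A\cap B$ under the continuous-control condition.

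For additivity, the $G$-compact support condition forces each element of $C_L[G\times\coprod_\alpha X_\alpha]$ to have support meeting only finitely many $X_\alpha$ (a compact subset of a $G$-$CW$-complex meets finitely many components of the orbit space), so $C^*_L(G\times\coprod_\alpha X_\alpha)$ is a direct limit of the finite sub-sums of the $C^*_L(G\times X_\alpha)$, and continuity of $K$-theory yields the direct-sum isomorphism. Finally, for the value at $G/H$ I would argue directly. For a point $\mathscr{E}^G_{\{pt\}}$ is trivial, so the continuous-control condition is vacuous and $C^*_L(G\times\{pt\})$ deformation retracts onto its constant functions by rescaling time $t\mapsto st$; hence $K_*(C^*_L(G\times\{pt\}))\cong K_*(C^*(G))\cong K_*(C^*_r(G))$ by Lemma \ref{lemstr2}. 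For general $G/H$, which is discrete, the $G$-compact support condition with $K=\{eH\}$ places all supports in $D\times D$, where $D=\{(g,gH)\}\cong G$ is a $G$-invariant diagonal fibre; the condition in Definition \ref{ecc} then forces, for large $t$, the two $G/H$-coordinates to coincide, i.e. the asymptotic $G$-propagation to lie in $H$. A support-pushing homotopy identifies the resulting $K$-theory with that of the localization algebra $C^*_L(H\times\{pt\})$ of $H$ over a point, and the point computation gives $LK^G_*(G/H)\cong K_*(C^*_r(H))$.
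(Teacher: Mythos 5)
Your overall architecture matches the paper's: covering isometries for functoriality, the six-term sequence of the ideal $C^*_L(G\times X;\langle A\rangle)$ for exactness, a sum-and-intersection decomposition of ideals for excision, direct limits for additivity, and concentration on the diagonal for $G/H$. However, there are genuine gaps at the two points that carry most of the weight.

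The most serious one is $G$-homotopy invariance. You correctly reduce it to showing that the projection $X\times[0,1]\to X$ induces an isomorphism, but you offer no argument for that reduction's target, and the implicit one (a rotation homotopy between covering isometries) fails: isometries covering the inclusions at time $s$ and $s'$ have ranges supported over $X\times\{s\}$ and $X\times\{s'\}$, so $\|V_sfV_s^*-V_{s'}fV_{s'}^*\|=\|f\|$ for $f\ne 0$ and the family is nowhere norm-continuous. In the localization-algebra framework this step is not formal; the paper proves it by showing $K_*\bigl(C^*_{L,0}(G\times\mathcal CX)\bigr)=0$ via an Eilenberg swindle (Lemma \ref{conevanish}) that interleaves shifts in the $t$-direction with pushes toward the cone point calibrated as $[x,s]\mapsto[x,2^{-1/(k+1)}s]$ precisely so that the equivariant continuous-control condition survives infinitely many iterations (Lemma \ref{conecontrol}); it then feeds this into a coarse-excision argument for $X\times[-1,1]/X\times\{-1\}$. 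Nothing in your proposal supplies a substitute for this. A related instance of the same issue appears in your value-at-a-point computation: the rescaling $f\mapsto f(st)$ is not norm-continuous in $s$ for merely uniformly continuous $f$ (the paper remarks on exactly this), so $C^*_L(G\times\{pt\})$ does not visibly deformation retract onto constants; one must instead kill $K_*$ of the kernel of the evaluation map by a shift-based swindle.

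The second gap is excision when $A\cap B\ne\emptyset$. Your claim that the intersection of the two ideals is ``governed by $A\cap B$'' is exactly the coarse excisiveness of the triad $(X,A,B)$, and this can fail for a general $G$-CW-triad: a point can be asymptotically close to both $A$ and $B$ without being asymptotically close to $A\cap B$. The paper's remedy is to replace $(X,A,B)$ by a $G$-homotopy-equivalent triad inside $X\times[0,1]$ with an explicit product collar around the intersection, to which Lemma \ref{coarseexcisive} applies --- and this remedy itself uses homotopy invariance, so the two gaps compound. The remaining items (functoriality via isometries covering $\phi$ exactly, after restricting to countable $G$-invariant supports $\Lambda$ so that $\mathcal H_{\Lambda\cap\phi^{-1}(y)}$ embeds in the separable $\mathcal H_y$; exactness; additivity) are essentially as in the paper and are fine modulo the usual details.
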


\begin{rem}\label{pushout}
	The excision of $LK^G_*$ is equivalent to the following: let 
	$(X, A)$ be a $G$-CW pair and $\phi: A\rightarrow Y$ be a cellular $G$-map.
	Let $X\cup_\phi Y$ be the $G$-CW complex obtained from $Y$ by attaching $X$ via $\phi$.  Then the canonical map $\Phi: (X, A)\longrightarrow (X\cup_\phi Y, Y)$ induces an isomorphism
	$$\Phi_*: LK^G_*(X, A)\tilde{\longrightarrow} LK^G_*(X\cup_\phi Y, Y).$$
	Clearly, the excision is a special case of this. For the converse, it suffices to show the collapsing map $q: (X, A)\longrightarrow (X/A, A/A)$ induces an isomorphism $q_*: LK^G_*(X, A)\tilde{\longrightarrow} LK^G_*(X/A, A/A)$, since the induced map $\bar\Phi: (X/A, A/A)\longrightarrow \big((X\cup_\phi Y)/Y, Y/Y\big)$ is a $G$-homeomorphism. Now $q_*$ is an isomorphism follows from the consideration of the following diagram
	$$
	\xymatrix{ 
			(X, A)\ar[r]\ar[d] & (X\cup\mathcal CA, \mathcal CA)\ar[d]\\ 
		    (X/A, A/A)\ar[r] & \big((X\cup\mathcal CA)/\mathcal CA, \mathcal CA/\mathcal CA\big)
		}
	$$ 
	where $\mathcal CA=A\times[0, 1]/A\times\{0\}$ is the cone over $A$. The top horizontal map induces an isomorphism on $LK^G_*$ by excision. The bottom horizontal map induces an isomorphism on $LK^G_*$ since it is a $G$-homeomorphism. The right vertical map induces an isomorphism on $LK^G_*$ since it is a $G$-homotopy equivalence (because $\mathcal CA$ is $G$-equivariantly contractible and the inclusion of $\mathcal CA$ in $X\cup\mathcal CA$ is a $G$-cofibration). It follows that $q_*$ is an isomorphism. All these arguments are routine generalizations from the  non-equivariant case to the equivariant case.
\end{rem}

\begin{rem}\label{mayer-vietoris}
	Let $\Phi: (X, A)\longrightarrow (X\cup_\phi Y, Y)$ be as in Remark \ref{pushout}. Then, as a formal consequence,  we have a Mayer-Vietoris exact sequence 
	$$
	\xymatrix{ 
		\cdots\ar[r]& LK^G_*(A)\ar[r] & LK^G_*(X)\oplus LK^G_*(Y)\ar[r] & LK^G_*(X\cup_\phi Y)\ar[r] &\cdots\\ 
	}
	$$ 	
\end{rem}

\begin{rem}\label{directed union}
	The additivity of $LK^G_*$ is equivalent to the following directed union property of $LK^G_*$: if a $G$-CW complex $X$ is the directed union of a family of its $G$-CW subcomplexes $\{X_\alpha: \alpha\in I\}$ directed by inclusions, then the canonical map $\text{colim}_{\alpha\in I} LK^G_*(X_\alpha)\tilde{\longrightarrow} LK^G_*(X)$ is an isomorphism.
	This is also a routine generalization from the  non-equivariant case to the equivariant case. In our case, the directed union property of $LK^G_*$ follows easily from the definition of $C^*_L(G\times X)$ and we will use it to obtain the additivity. As a special case, $LK^G_*(X)$ is the direct colimit of $LK^G_*(X_\alpha)$, where $X_\alpha$ ranges over all $G$-compact subcomplexes of $X$. 
\end{rem}


\subsection{Relative assembly map and the transitivity principle}
As $LK^G_*$ is defined for all $G$-CW-complexes, not just for proper $G$-CW-complexes, we can define \textit{relative assembly maps} and use it to study the Baum-Connes conjecture.

We shall first need to recall the notion of \textit{classifying space} relative to a \textit{family of subgroups} of a group. Let $\mathcal F$ be a set of subgroups of $G$. It is called a \textit{family of subgroups} of $G$ if it is closed under taking subgroups and conjugations. A model for the \textit{classifying space} of $G$ relative to $\mathcal F$, denoted by $E_\mathcal FG$, is a $G$-CW complex with the property that all of its isotropy groups belong to $\mathcal F$, and for any $G$-CW-complex $X$ with stabilizers in $\mathcal F$, there is a $G$-equivariant continuous map from $X$ to $E_\mathcal FG$ which is unique up to $G$-homotopy. $E_\mathcal FG$ is unique up to $G$-homotopy equivalence, and is also characterized by the property that all of its isotropy groups belong to $\mathcal F$, and for any $H\in\mathcal F$, its fixed point set is weakly contractible. It is customary to denote $E_{\{1\}}G$ by $EG$, $E_{\mathcal{FIN}}G$ by $\underline{E}G$ and $E_{\mathcal{VC}}G$ by $\underline{\underline{E}}G$, where $\{1\}, \mathcal{FIN}, \mathcal{VC}$ are the families of the trivial subgroup, finite subgroups, and virtually cyclic subgroups of $G$ respectively. If $\mathcal F=\mathcal{ALL}$ is the family of all subgroups of $G$, then we always take $E_{\mathcal{ALL}}G=\{pt\}$, the one point space with the trivial $G$-action. More information about classifying spaces for families of subgroups can be founded in \cite{Luck2}.

\begin{rem} The space $\underline{E}G$ defined above is Topologist's classifying space for proper $G$-
    actions. However, there is also the Analyst's classifying space for proper $G$-actions as defined and  
    constructed in \cite{BCH}. The standard models for them are different in general, as Topologists' is 
    usually not metrizable while Analysts' is always metrizable. This ambiguity can be eliminated as one can 
    show they are always $G$-homotopy equivalent, see the Appendix of \cite{Valette} for a proof. 
\end{rem}

\begin{defn}\label{relative_assembly_map} Let $\mathcal F$ and $\mathcal F'$ be two families of subgroups of  
    $G$ with $\mathcal F\subseteq\mathcal F'$. Then the unique (up to $G$-homotopy) $G$-map  $E_{\mathcal{F}} 
    G\rightarrow E_{\mathcal F'}G$ induces a homomorphism 
    $$A^{\mathcal{F}'}_{{\mathcal F}}: LK^G_*(E_{\mathcal{F}}G)\longrightarrow LK^G_*(E_{\mathcal{F}'}G).$$
    It is called the \textit{relative assembly map} from $\mathcal F$ to $\mathcal F'$. When $\mathcal F'=  
    \mathcal{ALL}$, the family of all subgroups of $G$, the relative assembly map will be denoted by  
    $A_{\mathcal F}$. In this case, since $E_{\mathcal{ALL}}G=\{pt\}$ and $LK^G_*(\{pt\})=K_*(C^*_r(G))$, we 
    have
    $$A_{\mathcal F}: LK^G_*(E_{\mathcal{F}}G)\longrightarrow K_*(C^*_r(G)).$$
\end{defn}

\begin{defn}\label{baum_connes} Let $\mathcal F$ be a family of subgroups of $G$. Then the \textit{Baum- 
    Connes conjecture for $G$ relative to $\mathcal F$} is the statement that the assembly map
    $$A_{\mathcal{F}}: LK^G_*(E_{\mathcal{F}}G)\longrightarrow K_*(C^*_r(G))$$
    is an isomorphism. When we say the Baum-Connes conjecture for $G$ without mentioning the family of    
    subgroups, we always mean $\mathcal F=\mathcal{FIN}$. 
\end{defn}  
  
\begin{rem}\label{obs_loc_alg} For any $G$-space $X$, we have an evaluation map
	$$ev_X:\ C^*_L(G\times X)\longrightarrow C^*(G\times X)$$
    $$ev_X(f)=f(0)$$
    which is a surjective $C^*$-algebra homomorphism.
    The kernel of this map is denoted by $C^*_{L, 0}(G\times X)$. It can then be shown that the Baum-Connes conjecture for $G$ relative to $\mathcal F$ holds if and only if the $K$-theory of 
$C^*_{L, 0}(G\times E_{\mathcal F}G)$ is trivial.
\end{rem}

   The following \textit{transitivity principle} is a very useful criteria that can be used to verify when a relative assembly map is an isomorphism:

\begin{thm}\label{transitivity_principle}\textnormal{(Transitivity Principle)} Let $\mathcal F$ and $\mathcal    
    F'$ be two families of subgroups of $G$ with $\mathcal F\subseteq\mathcal F'$. If the Baum-Connes     
    conjecture for $H$ relative to $H\cap\mathcal F$ holds for any $H\in\mathcal F'$, then the relative 
    assembly map 
    $$A_{\mathcal{F}'}^{{\mathcal F}}: LK^G_*(E_{\mathcal{F}}G)\longrightarrow LK^G_*(E_{\mathcal{F}'}G)$$
    is an isomorphism. Therefore, if the Baum-Connes conjecture holds for $G$ relative to $\mathcal F'$, then   
    it also holds for $G$ relative to $\mathcal F$.
\end{thm}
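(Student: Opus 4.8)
\emph{Proof proposal.} The plan is to reduce the relative assembly map to a projection map and then verify it orbit by orbit using the induction structure of Theorem \ref{induction_str}. First I would replace $A^{\mathcal F}_{\mathcal F'}$ by a projection. Consider the product $E_{\mathcal F}G\times E_{\mathcal F'}G$ with the diagonal $G$-action; since $\mathcal F$ is closed under taking subgroups, each isotropy group $G_x\cap G_y\subseteq G_x$ lies in $\mathcal F$. For every subgroup $K\in\mathcal F\subseteq\mathcal F'$ the fixed point set $(E_{\mathcal F'}G)^K$ is weakly contractible, while $(E_{\mathcal F}G)^K=\emptyset$ when $K\notin\mathcal F$; hence the equivariant Whitehead theorem shows that the first projection $\mathrm{pr}_1\colon E_{\mathcal F}G\times E_{\mathcal F'}G\to E_{\mathcal F}G$ is a $G$-homotopy equivalence. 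By uniqueness up to $G$-homotopy of the $G$-map into $E_{\mathcal F'}G$ from a complex with isotropy in $\mathcal F'$, the composite of $\mathrm{pr}_1$ with the classifying map $E_{\mathcal F}G\to E_{\mathcal F'}G$ is $G$-homotopic to the second projection $\mathrm{pr}_2$. Using $G$-homotopy invariance (Theorem \ref{eqkhomology}(3)), the map $A^{\mathcal F}_{\mathcal F'}$ is thereby identified, after precomposition with the isomorphism $(\mathrm{pr}_1)_*$, with
$$(\mathrm{pr}_2)_*\colon LK^G_*\big(E_{\mathcal F}G\times E_{\mathcal F'}G\big)\longrightarrow LK^G_*\big(E_{\mathcal F'}G\big),$$
and it suffices to prove this is an isomorphism.

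Next I would view both sides as equivariant homology theories in the second variable. Crossing with the fixed complex $E_{\mathcal F}G$ preserves $G$-CW-pairs, cofibrations, pushouts and disjoint unions, so $Y\mapsto LK^G_*(E_{\mathcal F}G\times Y)$ and $Y\mapsto LK^G_*(Y)$ are both $G$-equivariant homology theories with $G$-compact support, with natural transformation induced by the projection $\mathrm{pr}\colon E_{\mathcal F}G\times Y\to Y$ (which for $Y=E_{\mathcal F'}G$ is $\mathrm{pr}_2$). By the standard comparison argument for equivariant homology theories restricted to $G$-CW-complexes with isotropy in $\mathcal F'$ --- skeletal induction using the long exact sequence and the Mayer--Vietoris sequence of Theorem \ref{eqkhomology} and Remark \ref{mayer-vietoris}, the five lemma, additivity, and the directed-union property of Remark \ref{directed union} to reach the generally non-$G$-compact complex $E_{\mathcal F'}G$ --- it suffices to check that $(\mathrm{pr})_*$ is an isomorphism on each orbit $G/H$ with $H\in\mathcal F'$.

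The main work, and the step I expect to be the real obstacle, is this orbit computation, where the induction structure enters. Using the natural $G$-homeomorphism $E_{\mathcal F}G\times G/H\cong G\times_H\mathrm{res}_H E_{\mathcal F}G$ together with $G/H\cong G\times_H\{pt\}$, I would apply the induction isomorphism $LK^G_*(G\times_H -)\cong LK^H_*(-)$ of Theorem \ref{induction_str} to both the domain and the target. The restriction $\mathrm{res}_H E_{\mathcal F}G$ is an $H$-CW-complex whose isotropy groups $G_x\cap H$ lie in $H\cap\mathcal F$ and whose $K$-fixed point sets are weakly contractible for every $K\in H\cap\mathcal F$, so it is a model for $E_{H\cap\mathcal F}H$. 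By naturality of the induction structure, $(\mathrm{pr})_*$ is carried to the map induced by the $H$-map $E_{H\cap\mathcal F}H\to\{pt\}$, which under the identification $LK^H_*(\{pt\})\cong K_*(C^*_r(H))$ of Theorem \ref{eqkhomology}(6) is precisely the assembly map $LK^H_*(E_{H\cap\mathcal F}H)\to K_*(C^*_r(H))$ for $H$ relative to $H\cap\mathcal F$. The hypothesis that the Baum--Connes conjecture holds for $H$ relative to $H\cap\mathcal F$ says exactly that this map is an isomorphism, so the orbit case follows. The delicate point is to check carefully the naturality of the induction isomorphism with respect to the projection and to the homeomorphism $E_{\mathcal F}G\times G/H\cong G\times_H\mathrm{res}_H E_{\mathcal F}G$, so that $(\mathrm{pr})_*$ genuinely corresponds to this $H$-assembly map.

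Finally, for the last assertion, since $\mathcal F\subseteq\mathcal F'\subseteq\mathcal{ALL}$ and the composite $G$-map $E_{\mathcal F}G\to E_{\mathcal F'}G\to E_{\mathcal{ALL}}G=\{pt\}$ is $G$-homotopic to the classifying map $E_{\mathcal F}G\to\{pt\}$, functoriality gives $A_{\mathcal F}=A_{\mathcal F'}\circ A^{\mathcal F}_{\mathcal F'}$. Since $A^{\mathcal F}_{\mathcal F'}$ is an isomorphism by what precedes, if $A_{\mathcal F'}$ is also an isomorphism --- that is, the Baum--Connes conjecture holds for $G$ relative to $\mathcal F'$ --- then so is $A_{\mathcal F}$, i.e. the conjecture holds for $G$ relative to $\mathcal F$.
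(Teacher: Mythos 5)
Your proposal follows essentially the same route as the paper's proof: replace the relative assembly map by the projection $E_{\mathcal F}G\times E_{\mathcal F'}G\to E_{\mathcal F'}G$ using the product model for $E_{\mathcal F}G$, reduce to orbits $G/H$ with $H\in\mathcal F'$ by skeletal induction with Mayer--Vietoris, additivity and a colimit argument, and then identify the orbit map with the $H$-assembly map relative to $H\cap\mathcal F$ via the induction isomorphism of Theorem \ref{induction_str}. The argument is correct and matches the paper's proof in all essential steps.
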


\begin{rem}   
   Let $\mathcal F$ be a family of subgroups of $G$ which contains the family $\mathcal{FIN}$ of finite subgroups of $G$. According to the transitivity principle, to show the Baum-Connes conjecture for $G$, it suffices to show 1) the Baum-Connes conjecture for $H$ holds for any $H\in\mathcal F$ (which is usually much easier than that for $G$ as $G$ is bigger) and 2) the Baum-Connes conjecture for $G$ relative to $\mathcal F$ holds. This is also the general strategy used to prove the Farrell-Jones conjecture for a large class of groups. General geometric conditions have been formulated in \cite{BLR2}\cite{BL2} from which the Farrell-Jones conjecture for $G$ relative to $\mathcal F$ can be deduced. It is our hope that this strategy and these geometric conditions can also be used  to prove the Baum-Connes conjecture through our model.
\end{rem}

\section{Lemmas}\label{lemmas}

In this section, we prove various lemmas that will be needed for the proof of our main results. Throughout the section, $G$ is a countable discrete group and $\mathcal H$ is a separable and infinite dimensional complex Hilbert space. 

\subsection{The algebras $C[G\times X]$ and $C^*(G\times X)$} We will first have a closer look at the sets
$C[G\times X]$ and $C^*(G\times X)$  defined in Section \ref{basic_constructions_and_main_results}. We need the following basic facts:

\begin{lem}\label{T-properties} Let $X, Y, Z$ be three sets and $\mathcal H_X, \mathcal H_Y, \mathcal H_Z$ be the corresponding Hilbert spaces as defined in Section \ref{basic_constructions_and_main_results}. Let $T: \mathcal H_X\rightarrow\mathcal H_Y$ and $S:\mathcal H_Y\rightarrow\mathcal H_Z$ be two bounded linear operators, and $T^*:\mathcal H_Y\rightarrow\mathcal H_X$ be the adjoint of $T$. Then
	\begin{enumerate}
		\item $(T^*)_{x, y}=(T_{y, x})^*$ for any $x\in X, y\in Y$;
		\item for any $x\in X, z\in Z$ and $v\in\mathcal H$, we have
		$$(S\circ T)_{z, x}(v)=\displaystyle\sum_{y\in Y}(S_{z, y}\circ T_{y, x})(v),$$
		where the equality means the net $\{\ds\sum_{y\in F}(S_{z, y}\circ T_{y, x})(v)\ |\ F\subseteq Y\ \text{is\ finite}\}$ converges to $(S\circ T)_{z,x}(v)$.
		\item $\textnormal{supp}(T^*)=\textnormal{supp}(T)^{\textnormal{op}}$ and 
		$\textnormal{supp}(S\circ T)\subseteq\textnormal{supp}(S)\circ\textnormal{supp}(T)$, where
		$$\textnormal{supp}(T)^{\textnormal{op}}=\{(x, y)\in X\times Y\ |\ (y, x)\in\textnormal{supp}(T)\},$$
		$$\textnormal{supp}(S)\circ\textnormal{supp}(T)=\{(z, x)\in Z\times X\ |\ \exists y\in Y,\ \textnormal{s.t.}\ (z, y)\in\textnormal{supp}(S),\ (y, x)\in\textnormal{supp}(T) \}.$$
		\item Let $x\in X, z\in Z$. If there are only finitely many $y\in Y$ so that $S_{z, y}\circ T_{y, x}\ne 0$,  then $$(S\circ T)_{z, x}=\ds\sum_{y\in Y}S_{z, y}\circ T_{y, x}.$$
		\item If one of $S$ and $T$ is locally compact, then $S$ is row finite or $T$ is column finite implies $S\circ T$ is locally compact.
	\end{enumerate}
\end{lem}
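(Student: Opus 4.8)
The plan is to reduce every claim to the bookkeeping identities relating the inclusions $\iota_x$ and projections $\pi_y$. The two facts I would establish first are that $\iota_x$ and $\pi_x$ are mutually adjoint, i.e. $\iota_x^* = \pi_x$ and $\pi_x^* = \iota_x$ (immediate from the definition of the orthogonal direct sum $\mathcal H_X=\bigoplus_X\mathcal H$), and that the partial sums $\sum_{y\in F}\iota_y\pi_y$ converge strongly to the identity on $\mathcal H_Y$ as $F$ ranges over the finite subsets of $Y$ (this is just the statement that every $w\in\mathcal H_Y$ is the norm-convergent sum of its components $\iota_y\pi_y w$). Everything else follows by inserting these identities into the defining formula $T_{y,x}=\pi_y\circ T\circ\iota_x$.

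For (1), I would compute $(T^*)_{x,y}=\pi_x\circ T^*\circ\iota_y$ and take adjoints: since $(\pi_x T^*\iota_y)^* = \iota_y^*(T^*)^*\pi_x^* = \pi_y T\iota_x = T_{y,x}$, we obtain $(T^*)_{x,y}=(T_{y,x})^*$. For (2), the key step is to insert the strong decomposition of the identity on $\mathcal H_Y$: for a fixed $v\in\mathcal H$ and $x\in X$, the vector $T\iota_x v$ is the norm-limit of the net $\sum_{y\in F}\iota_y\pi_y T\iota_x v$, so applying the \emph{bounded} operator $\pi_z\circ S$ and using its continuity yields $(S\circ T)_{z,x}(v)=\pi_z S T\iota_x v = \sum_{y}\pi_z S\iota_y\pi_y T\iota_x v = \sum_y (S_{z,y}\circ T_{y,x})(v)$ in the stated sense of net convergence.

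The remaining parts are formal consequences. Claim (3) for $T^*$ is read off directly from (1), since $(T_{y,x})^*\neq 0$ if and only if $T_{y,x}\neq 0$, whence $\text{supp}(T^*)=\text{supp}(T)^{\text{op}}$; the inclusion for $S\circ T$ follows from (2), because if the net $\sum_y (S_{z,y}\circ T_{y,x})(v)$ is nonzero for some $v$, then at least one summand $S_{z,y}\circ T_{y,x}$ is nonzero, forcing $(z,y)\in\text{supp}(S)$ and $(y,x)\in\text{supp}(T)$. Claim (4) is the observation that when only finitely many summands survive, the net in (2) is an honest finite sum of bounded operators agreeing with $(S\circ T)_{z,x}$ on every $v$, hence equal to it as an operator. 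Finally, for (5) I would note that in each of the four cases the hypothesis (row finiteness of $S$, or column finiteness of $T$) guarantees that for fixed $z,x$ only finitely many $y$ contribute to $S_{z,y}\circ T_{y,x}$, so (4) applies and $(S\circ T)_{z,x}$ is a finite sum of terms each of the form (compact)$\,\circ\,$(bounded) or (bounded)$\,\circ\,$(compact), hence compact.

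The only genuinely delicate point is the convergence assertion in (2): one must be careful that the interchange of $\pi_z\circ S$ with the (strongly, not norm, convergent) sum $\sum_y\iota_y\pi_y$ is justified, which is why I would work at the level of a fixed vector $v$ and invoke continuity of the bounded operator $\pi_z\circ S$ rather than manipulating the operator series directly. Everything else is routine matrix algebra on the components.
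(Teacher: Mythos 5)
Your proposal is correct and follows essentially the same route as the paper: part (1) via the adjoint identities $\iota_x^*=\pi_x$, $\pi_y^*=\iota_y$, part (2) by decomposing $T\iota_x v$ as the norm-convergent sum of its components and applying the bounded operator $\pi_z\circ S$, and parts (3)--(5) as formal consequences (which the paper leaves to the reader). Your explicit attention to why the interchange in (2) is justified at the level of a fixed vector is a welcome clarification of a point the paper passes over silently.
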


\begin{proof}
\begin{enumerate}
\item For any $v, w\in\mathcal H$, we have
\begin{align*}
\langle v, (T_{y, x})^* w\rangle &=\langle T_{y, x}v, w\rangle\\
&=\langle(\pi_y\circ T\circ\iota_x)(v), w\rangle\\
&=\langle v, (\pi_y\circ T\circ\iota_x)^*(w)\rangle\\
&=\langle v, (\iota_x^*\circ T^*\circ\pi_y^*)(w)\rangle\\
&=\langle v, (\pi_x\circ T^*\circ\iota_y)(w)\rangle\\
&=\langle v, (T^*)_{x, y}(w)\rangle.
\end{align*}
Therefore $(T^*)_{x, y}=(T_{y, x})^*$.
\item  We compute 
\begin{align*}
(S\circ T)_{z, x}(v)&=(\pi_z\circ S\circ T\circ\iota_x)(v)\\
&=\Big(\pi_z\circ S\Big)\Big(\sum_{y\in Y}\iota_y\big(\pi_y(T(\iota_x(v)))\big)\Big)\\
&=\sum_{y\in Y}\big(\pi_z\circ S\circ\iota_y\circ\pi_y\circ T\circ\iota_x\big)(v)\\
&=\sum_{y\in Y}(S_{z, y}\circ T_{y, x})(v)
\end{align*}
\end{enumerate}
Parts (3)-(5) of the lemma follow easily from parts (1) and (2).
\end{proof}

    Let $\mathcal K$ denote the algebra of all compact operators on $\mathcal H$, and $\rho$ denote the \textit{right regular representation} of the group algebra $\mathbb C[G]$ on $l^2(G)$. Therefore 
   $\rho(g)(\delta_h)=\delta_{hg}$ for any $g, h\in G$, where $\delta_h\in l^2(G)$ is the function with $\delta_h(h)=1$ and $\delta_h(g)=0, \forall g\ne h$. Note that $\rho(g_1g_2)=\rho(g_2)\rho(g_1)$. Here we choose right regular representation instead of left regular representation, since under this choice, $\rho(g)$ is $G$-invariant under the usual left $G$-action on $B(l^2(G))$.

\begin{lem}\label{lemstr1} Let $X$ be a left $G$-set, and $C[G\times X], C[G]$ be the sets as defined in Definition \ref{Roealg}. Then
	\begin{enumerate}
		\item
			$C[G\times X]$ is a $*$-subalgebra of $B(\mathcal H_{G\times X})$, the $C^*$-algebra of all bounded linear operators on $\mathcal H_{G\times X}$. 
		\item 
		    $C[G]\cong \rho(\mathbb C[G])\odot\mathcal K$, the algebraic tensor product of $\rho(\mathbb C[G])$ and $\mathcal K$, under the natural unitary isomorphism $U: \mathcal H_G\rightarrow l^2(G)\otimes\mathcal H,\ U(v)=\sum_{g\in G}{\delta_g}\otimes v(g)$.
	\end{enumerate}
\end{lem}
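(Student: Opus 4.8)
The plan is to treat the two parts separately: part (1) verifies that each of the four conditions of Definition \ref{Roealg}, together with boundedness, survives sums, scalar multiples, composition and the $*$-operation, while part (2) is a direct identification of matrix components under $U$. Closure of $C[G\times X]$ under addition and scalar multiplication is immediate, since $G$-invariance is linear, a finite sum of compact operators is compact, and the union of two witnessing finite sets witnesses $G$-finiteness and finite propagation for a sum. For a composite $T_1\circ T_2$ of two elements, boundedness is automatic and $G$-invariance follows from $L_g(T_1\circ T_2)=(L_gT_1)\circ(L_gT_2)$. The inclusion $\text{supp}(T_1\circ T_2)\subseteq\text{supp}(T_1)\circ\text{supp}(T_2)$ of Lemma \ref{T-properties}(3) then gives $G$-finiteness, since the first index of any element of the composed support lies in $G_{F_1}$ and the second in $G_{F_2}$, so $F_1\cup F_2$ works; and it gives finite propagation via the product of the two propagation sets, using $g^{-1}g''=(g^{-1}g')\,((g')^{-1}g'')$.

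Local compactness of the composite is where I would appeal to Lemma \ref{T-properties}(5), which requires that the factors be row and column finite. I would therefore first record that row/column finiteness follows from conditions (3) and (4): fixing a row index $(g',x')$, condition (4) confines $g$ to the finite set $g'S^{-1}$, and condition (3) confines each corresponding $x$ to the finite set $\{gy:y\in F\}$, leaving only finitely many nonzero entries in the row; the column case is symmetric. For the adjoint, Lemma \ref{T-properties}(1) gives $(T^*)_{(g,x),(g',x')}=(T_{(g',x'),(g,x)})^*$, so local compactness is immediate; $G$-invariance of $T^*$ follows from $L_g(T^*)=(L_gT)^*=T^*$ since the $G$-action is by unitaries; and $\text{supp}(T^*)=\text{supp}(T)^{\text{op}}$ yields $G$-finiteness (the same $F$ works, as $G_F\times G_F$ is symmetric) and finite propagation (with $S^{-1}$).

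For part (2), taking $X$ a point makes condition (3) vacuous and identifies $G\times X$ with $G$, so $C[G]$ is exactly the set of $G$-invariant, locally compact, finite-propagation operators on $\mathcal{H}_G$. The key step is that $G$-invariance gives $T_{g',g}=T_{g^{-1}g',\,1}$, so $T$ is determined by the compact operators $A_s:=T_{s,1}$, which vanish for $s$ outside the finite propagation set. A short computation shows that $U^{-1}(\rho(s)\otimes A)U$ is the operator on $\mathcal{H}_G$ whose only nonzero components are its $(gs,g)$-components, each equal to $A$; comparing with the decomposition of $T$ gives $UTU^{-1}=\sum_s\rho(s)\otimes A_s$, a finite sum lying in $\rho(\mathbb{C}[G])\odot\mathcal{K}$. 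Conversely every such finite sum is $G$-invariant, because $\rho(s)$ commutes with the transported left-translation action $\lambda(g)\otimes 1$ (this commutation is exactly why the right regular representation is chosen), and is visibly locally compact and of finite propagation. As conjugation by the unitary $U$ is a $*$-isomorphism restricting to this bijection, it is the desired algebra isomorphism.

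The one genuinely delicate point is the local compactness of composites: matrix multiplication of operator-valued matrices is only well-behaved when rows and columns are finitely supported, so extracting row/column finiteness from conditions (3) and (4) before invoking Lemma \ref{T-properties}(4)--(5) is what keeps the argument honest. Everything else is routine bookkeeping with the four conditions.
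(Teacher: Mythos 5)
Your proposal is correct and follows essentially the same route as the paper: part (1) is verified condition-by-condition using Lemma \ref{T-properties}, with the row/column finiteness of elements of $C[G\times X]$ (which the paper asserts immediately after Definition \ref{Roealg}) supplying the hypothesis needed for local compactness of composites, and part (2) is the same component computation showing $UTU^{-1}=\sum_s\rho(s)\otimes T_{s,e}$, equivalent to the paper's identity $U\circ T=\sigma(T)\circ U$ together with its explicit inverse $\tau$. Your extra care in deriving row/column finiteness from conditions (3) and (4) is a welcome expansion of a step the paper leaves to the reader, but it is not a different argument.
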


\begin{proof}
	(1) Let $S, T\in C[G\times X]$ and $a\in\mathbb C$. It is easy to see that $aT\in C[G\times X]$. Now $S+T\in C[G\times X]$ follows from the fact that supp$(S+T)\subseteq$supp$(S)\bigcup$ supp$(T)$, and $S\circ T\in C[G\times X]$ follows from the fact that elements of $C[G\times X]$ are row and column finite and Lemma \ref{T-properties}. Finally $S^*\in C[G\times X]$ also follows from Lemma \ref{T-properties}. 
	
	(2) Let $T\in C[G]$, then there is a finite subset $S\subseteq G$, so that $T_{g', g}\neq0$ only if $g'=gs$ for some $s\in S$, and $T_{gs, g}=T_{s, e}$ by $G$-invariance of $T$, where $e\in G$ is the identity elment. Now define a map $\sigma: C[G]\longrightarrow\rho(\mathbb C[G])\odot\mathcal K$ by $\sigma(T):=\sum_{s\in S}\rho(s)\otimes T_{s, e}$.
	We check that $\sigma$ is the $*$-algebra homomorphism that is induced by the natural unitary isomorphism $U$, i.e., $U\circ T=\sigma(T)\circ U$. It suffices to show that for any $g\in G, v\in\mathcal H$, we have $(U\circ T)(\iota_g(v))=(\sigma(T)\circ U)(\iota_g(v))$. We compute
	\begin{align*}
	   (U\circ T)(\iota_g(v))&=U\Big(\sum_{g'\in G}\iota_{g'}\big(\pi_{g'}(T(\iota_g(v)))\big)\Big)\\
	   &=\sum_{g'\in G}U\Big(\iota_{g'}\big(T_{g', g}(v)\big)\Big)\\
	   &=\sum_{g'\in G}\delta_{g'}\otimes T_{g', g}(v)\\
	   &=\sum_{s\in S}\delta_{gs}\otimes T_{s, e}(v)
	\end{align*}
	and
	\begin{align*}
	(\sigma(T)\circ U)(\iota_g(v))&=\sigma(T)(\delta_g\otimes v)\\
	&=\sum_{s\in S}(\rho(s)\otimes T_{s, e})(\delta_g\otimes v)\\
	&=\sum_{s\in S}\delta_{gs}\otimes T_{s, e}(v).
	\end{align*}
Hence $U\circ T=\sigma(T)\circ U$. $\sigma$ is onto since it has an inverse given by $\tau: \rho(\mathbb C[G])\odot\mathcal K\longrightarrow C[G]$,  $\tau(\rho(s)\otimes J):= T$, where $T$ is the operator on $\mathcal H_G$ with components $T_{gs, g}=J, \forall g\in G$ and $T_{g', g}=0$ if $g'\neq gs$ (of course we extend $\tau$ to all elements of $\rho(\mathbb C[G])\odot\mathcal K$ by linearity). This completes the proof.
\end{proof}

    Recall,  as in Definition \ref{Completed_Roealg}, $C^*(G\times X)$ is the norm closure of $C[G\times X]$ in $B(\mathcal H_{G\times X})$ and $C^*(G)$ is $C^*(G\times X)$ when $X$ is the one-point space. According to the lemma above, we have $C^*(G)\cong C^*_r(G)\otimes\mathcal K$, where $C^*_r(G)$ is the reduced group $C^*$-algebra of $G$. To analyze the structure of $C^*(G\times X)$ for general $X$, we introduce some notations first. For every finite subset $F$ of $X$, let $C[G\times X; F]$ denote the subalgebra of $C[G\times X]$ consisting of all operators $T$ so that supp($T$)$\subseteq G_F\times G_F$, where $G_F:=\{(g, gx)\in G\times X\ |\ g\in G, x\in F\}$, and $C^*(G\times X; F)$ be its closure in $B(\mathcal H_{G\times X})$.  Then $C^*(G\times X; E)$ is a $C^*$-subalgebra of  $C^*(G\times X; F)$ if $E\subseteq F$,  $C[G\times X]=\bigcup_{F\subseteq X, F \text{ is finite}}C[G\times X; F]$, and $C^*(G\times X)$ is the closure of $\bigcup_{F\subseteq X, F \text{ is finite}}C^*(G\times X; F)$ in $B(\mathcal H_{G\times X})$.  Therefore $C^*(G\times X)=\text{colim}_{F\subseteq X, F\text{ is finite}} C^*(G\times X; F)$.

    For an algebra  $A$, and a finite set $F$, let $M_F(A)$ denote the algebra consisting of all  matrices indexed by $F\times F$ with entires in the algebra $A$. If $E\subseteq F$, then $M_E(A)$ embeds in $M_F(A)$ naturally. 

\begin{lem}\label{lemstr2} With the above notations, we have
	\begin{enumerate}
		\item 
		     $C[G\times X; F]\cong M_F(C[G])\cong M_F(\rho(\mathbb C[G])\odot\mathcal K)$, therefore $C^*(G\times X; F)\cong M_F( C^*_r(G)\otimes\mathcal K)$;
		\item 
		     $C^*(G\times X)\cong \text{colim}_{F\subseteq X, F\text{\ is finite}} M_{F}\big(C^*_r(G)\otimes\mathcal K\big)$, where the direct colimit is taking over all finite subsets of $X$ under inclusions. Therefore, the $K$-theory of $C^*(G\times X)$ agrees with the $K$-theory of $C^*_r(G)$. 
    \end{enumerate}
\end{lem}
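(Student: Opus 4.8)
The plan is to exploit the $G$-invariance and $G$-finiteness conditions to reduce every operator in $C[G\times X; F]$ to a finite block of operators living on a single copy of $\mathcal H_G$. The starting observation is that the parametrization map $\Psi: G\times F\to G_F$, $\Psi(g, x)=(g, gx)$, is a bijection: it is surjective by the very definition of $G_F$, and injective because $(g, gx)=(g', g'x')$ forces $g=g'$ and then $x=x'$. Hence $\Psi$ induces a unitary $\mathcal H_{G\times F}\cong\mathcal H_{G_F}\subseteq\mathcal H_{G\times X}$, and since $\mathcal H_{G\times F}=\bigoplus_{x\in F}\mathcal H_G$ is a \emph{finite} direct sum, conjugation by this unitary yields a $*$-isomorphism $B(\mathcal H_{G_F})\cong M_F(B(\mathcal H_G))$ sending an operator to its matrix of blocks.

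For part (1), I would define, for $T\in C[G\times X; F]$ and $x_0, x_0'\in F$, the block $S^{(x_0', x_0)}\in B(\mathcal H_G)$ by $S^{(x_0', x_0)}_{g', g}:=T_{(g', g'x_0'),\,(g, gx_0)}$. One checks directly that $S^{(x_0', x_0)}\in C[G]$: its $G$-invariance is exactly the $G$-invariance of $T$ restricted to the relevant diagonal orbit, local compactness is inherited entrywise, and finite propagation in the $G$-direction is condition (4) of Definition \ref{Roealg}. Conversely, given arbitrary blocks $S^{(x_0', x_0)}\in C[G]$, the operator reconstructed by the same formula (and set to $0$ off $G_F\times G_F$) is bounded because $F$ is finite, and lies in $C[G\times X; F]$. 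Using the component formulas for products and adjoints from Lemma \ref{T-properties}, the assignment $T\mapsto(S^{(x_0', x_0)})_{x_0', x_0\in F}$ is a $*$-algebra isomorphism onto $M_F(C[G])$; indeed it is precisely the block isomorphism $B(\mathcal H_{G_F})\cong M_F(B(\mathcal H_G))$ above, restricted to $C[G\times X; F]$. Combining with Lemma \ref{lemstr1}(2) gives $M_F(C[G])\cong M_F(\rho(\mathbb C[G])\odot\mathcal K)$. Since this isomorphism is implemented by a unitary it is isometric, so passing to norm closures (and using that $M_F$ of a fixed finite size commutes with closure) gives $C^*(G\times X; F)\cong M_F(\overline{C[G]})=M_F(C^*(G))\cong M_F(C^*_r(G)\otimes\mathcal K)$, the last step being the identification $C^*(G)\cong C^*_r(G)\otimes\mathcal K$ recorded just after Lemma \ref{lemstr1}.

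For part (2), I would feed part (1) into the already established presentation $C^*(G\times X)=\text{colim}_{F} C^*(G\times X; F)$. The point to verify is compatibility of the structure maps: for $E\subseteq F$ the inclusion $C^*(G\times X; E)\hookrightarrow C^*(G\times X; F)$ corresponds, under the block isomorphisms, to the standard corner embedding $M_E(C^*_r(G)\otimes\mathcal K)\hookrightarrow M_F(C^*_r(G)\otimes\mathcal K)$ as the upper-left block. Granting this, $C^*(G\times X)\cong\text{colim}_F M_F(C^*_r(G)\otimes\mathcal K)$. Finally, since $K$-theory commutes with direct limits of $C^*$-algebras, and since the corner embedding $M_E(A)\hookrightarrow M_F(A)$ induces, via stability (Morita invariance) of $K$-theory, the identity on $K_*(A)$ with $A=C^*_r(G)\otimes\mathcal K$, the colimit system on $K$-theory is constant. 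Hence $K_*(C^*(G\times X))\cong K_*(C^*_r(G)\otimes\mathcal K)\cong K_*(C^*_r(G))$, the last isomorphism again being stability.

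The main obstacle is the careful bookkeeping in part (1): establishing that $T\mapsto(S^{(x_0', x_0)})$ is genuinely a $*$-homomorphism and not merely a linear bijection. Multiplicativity requires expanding $(T T')_{(g', g'x_0'),\,(g, gx_0)}$ as a sum over the intermediate index $(g'', x'')$, observing via the support condition that only $x''=g''x_0''$ with $x_0''\in F$ contribute, and recognizing the resulting double sum as the $(x_0', x_0)$-entry of the matrix product; this is exactly where the bijection $\Psi$ and the row/column finiteness (so that Lemma \ref{T-properties}(4) applies) are essential. Everything else---boundedness, the closure steps, and the $K$-theory computation---is then routine.
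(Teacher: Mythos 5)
Your proposal is correct and follows essentially the same route as the paper: the block decomposition $S^{(x_0',x_0)}_{g',g}=T_{(g',g'x_0'),(g,gx_0)}$ is exactly the paper's map $T\mapsto M^T$ with $(M^T_{x',x})_{g',g}=T_{(g',g'x'),(g,gx)}$, followed by Lemma \ref{lemstr1}, passage to closures, and the standard colimit/stability argument in $K$-theory. The paper leaves the verification that this is a $*$-isomorphism and the $K$-theoretic bookkeeping as ``not hard to check'' and ``standard facts,'' whereas you supply those details; the substance is the same.
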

\begin{proof}
	(1) For any $T\in C[G\times X; F]$, we define $M^T\in M_F(C[G])$ to be the element whose $(x', x)$-entry, $x', x\in F$, is given by $(M^T_{x', x})_{g', g}=T_{(g', g'x'),(g, gx)}$. It is then not hard to check that the assignment $T\mapsto M^T$ gives an isomorphism between the $*$-algebras $C[G\times X; F]$ and $M_F(C[G])$. Combining Lemma \ref{lemstr1}, part (1) follows.
	
	(2) This follows from part (1) and standard facts from $K$-theory of $C^*$-algebras.
\end{proof}


Next, we consider the case when $X=G/H$, where $H<G$ is a subgroup, and study a subalgebra of $C^*(G\times G/H)$ that we now define. This will be relevant when we determine the value of $LK^G_*$ at $X=G/H$.

\begin{defn} 
	Let $H<G$ be a subgroup. Define $C[G\times G/H; 0]$ to be the subset of $C[G\times G/H]$ consisting of all operators $T$ with the following additional property: for any $g, g'\in G, x,x'\in G/H$, if $x\ne x'$, then $T_{(g', x'),(g, x)}=0$. $C[G\times G/H; 0]$ is a $*$-subalgebra of $C[G\times G/H]$. Its norm closure in
$B(\mathcal H_{G\times G/H})$ is denoted by $C^*(G\times G/H; 0)$.
\end{defn}

   The structure of the algebra $C^*(G\times G/H; 0)$ and its $K$-theory is described in the following lemma:

\begin{lem}\label{lemmorecontrol}  Let $H\backslash G$ be the right coset space of $H$ in $G$ with the trivial $H$-action. Then  
     $$C[G\times G/H; 0]\cong C[H\times H\backslash G],$$
     $$C^*(G\times G/H; 0)\cong C^*(H\times H\backslash G).$$
Therefore we have isomorphic $K$-groups $$K_i(C^*(G\times G/H; 0))\cong K_i(C^*_r(H)), i=0, 1.$$
\end{lem}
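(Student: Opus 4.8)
The plan is to exploit $G$-invariance to collapse an operator in $C[G\times G/H;0]$ onto a single ``block'' over the base coset $eH\in G/H$, and then to recognise that block as an element of $C[H\times H\backslash G]$. The conceptual point is that the stabiliser of $eH$ is $H$, and the $G$-direction sitting over this one coset, viewed as a left $H$-set, is $H$-equivariantly isomorphic to $H\times H\backslash G$.

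First I would fix a set $\{r_\alpha\}$ of representatives for the right cosets $H\backslash G$, so that every $g\in G$ has a unique expression $g=h\,r_\alpha$ with $h\in H$. The assignment $g\mapsto(g r_\alpha^{-1},\,Hg)$, where $r_\alpha$ represents $Hg$, is a bijection $G\to H\times H\backslash G$ that is $H$-equivariant for left multiplication on $G$ and the diagonal $H$-action on $H\times H\backslash G$ (left translation on the $H$-factor, trivial on $H\backslash G$); it therefore induces an $H$-equivariant unitary $\mathcal H_G\cong\mathcal H_{H\times H\backslash G}$. Next, for $T\in C[G\times G/H;0]$ and $x\in G/H$ let $T^{(x)}$ be the operator on $\mathcal H_{G\times\{x\}}\cong\mathcal H_G$ with components $(T^{(x)})_{g',g}=T_{(g',x),(g,x)}$; since $T$ is supported on the diagonal of $G/H$, it is block diagonal, $T=\bigoplus_x T^{(x)}$. $G$-invariance gives $T^{(g_0x)}_{g_0g',\,g_0g}=T^{(x)}_{g',g}$, so the base block $B:=T^{(eH)}$ is invariant under left translation by $H$ and every other block is a shift of it; conversely an $H$-invariant $B$ on $\mathcal H_G$ determines a well-defined $G$-invariant $T$ by this same formula. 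I would define $\Psi\colon C[G\times G/H;0]\to C[H\times H\backslash G]$ sending $T$ to $B=T^{(eH)}$ transported along the unitary above, and $\Phi$ in the reverse direction by extension-by-$G$-invariance. The heart of the argument is to verify that $\Psi$ and $\Phi$ are mutually inverse $*$-algebra homomorphisms, which reduces to matching the four defining conditions: local compactness and $H$-invariance transfer immediately; the $G$-finiteness of $T$ (support in $G_F\times G_F$ for finite $F\subseteq G/H$) corresponds exactly to the $H$-finiteness of $B$ (the $H\backslash G$-coordinate confined to a finite set) via the standard bijection $G/H\cong H\backslash G$, $g^{-1}H\leftrightarrow Hg$; and finite propagation in the $G$-direction corresponds to finite propagation in the $H$-direction.

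I expect the propagation bookkeeping to be the main obstacle, as it does not translate termwise: writing $g=h r_\alpha$ and $g'=h'r_\beta$ gives $g^{-1}g'=r_\alpha^{-1}(h^{-1}h')r_\beta$, so $g^{-1}g'\in S$ does not by itself bound $h^{-1}h'$. The resolution is that $H$-finiteness already restricts $\alpha,\beta$ to a finite index set $F'$, and then $h^{-1}h'$ is forced into the finite set $H\cap\bigcup_{\alpha,\beta\in F'} r_\alpha S r_\beta^{-1}$; the reverse inclusion needed for $\Phi$ is symmetric. Once $\Psi$ is established as an algebraic $*$-isomorphism, passing to $C^*$-closures is free: because $T=\bigoplus_x T^{(x)}$ with all blocks unitarily conjugate to $B$, we have $\|T\|=\|B\|=\|\Psi(T)\|$, so $\Psi$ is isometric and extends to an isomorphism $C^*(G\times G/H;0)\cong C^*(H\times H\backslash G)$. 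Finally, applying Lemma~\ref{lemstr2} to the group $H$ and the $H$-set $H\backslash G$ identifies the $K$-theory of $C^*(H\times H\backslash G)$ with that of $C^*_r(H)$, yielding $K_i(C^*(G\times G/H;0))\cong K_i(C^*_r(H))$ for $i=0,1$.
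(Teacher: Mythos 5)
Your proposal is correct and follows essentially the same route as the paper: decompose $T$ into diagonal blocks over $G/H$, use $G$-invariance to reduce to the base block over $eH$, transport it to $\mathcal H_{H\times H\backslash G}$ via the coset-representative unitary, check the four conditions (with the propagation bound coming from $H\cap RSR^{-1}$ for a finite set $R$ of representatives, exactly as in the paper), and conclude isometrically before invoking Lemma~\ref{lemstr2}. No substantive differences.
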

\begin{proof} 
	For each $x\in G/H$, let $\iota_x:\mathcal H_{G\times\{x\}}\rightarrow\mathcal H_{G\times G/H}$ be the obvious embedding, and $\pi_x:\mathcal H_{G\times G/H}\rightarrow H_{G\times\{x\}}$ be the obvious orthogonal projection. For any 
	$T\in C[G\times G/H;0]$, define $T^x:=\pi_x\circ T\circ\iota_x$.
	In terms of components, we have $T^x_{(g', x), (g, x)}=T_{(g', x),(g, x)}$. Due to the support condition on $T$, $T=\oplus_{x\in G/H} T^x$. By $G$-invariance of $T$, each $T^x$ is $G_x$-invariant, where $G_x$ is the stabilizer of $x$ for the $G$-action on $G/H$. For any $x,x'\in G/H$, and any $k\in G$ with $kx=x'$, we have $T^{x'}=\iota_k\circ T^x\circ\iota_{k^{-1}}$, where $\iota_k:\mathcal H_{G\times\{x\}}\rightarrow\mathcal H_{G\times\{x'\}}$ is the unitary isomorphism induced by $(g, x)\mapsto(kg, kx)=(kg, x')$. These arguments show that $T$ is completely determined by $T^x$ for any single $x\in G/H$ and $||T||=||T^x||$.
	 
	Let $x_0=H\in G/H$. Choose $g_\lambda\in G, \lambda\in\Lambda$, so that $H\backslash G=\{Hg_\lambda\ |\ \lambda\in\Lambda\}$. Let $j$ be the unitary isomorphism of $\mathcal H_{H\times H\backslash G}$ to $\mathcal H_{G\times\{x_0\}}$ induced by $(h, Hg_\lambda)\mapsto (hg_\lambda, x_0)$. For any $T\in C[G\times G/H; 0]$, let $M^T:=j^{-1}\circ T^{x_0}\circ j\in B(\mathcal H_{H\times H\backslash G})$. In terms of components, we have for any  $h, h'\in H, g_{\lambda}, g_{\lambda'}\in G, \lambda, \lambda'\in\Lambda$
	$$M^T_{(h', Hg_{\lambda'}),(h, Hg_\lambda)}=T_{(h'g_{\lambda'}, x_0), (hg_\lambda, x_0)}\in\mathcal K.$$

    We claim that $M^T\in C[H\times H\backslash G]$. Clearly $M^T$ is locally compact. $M^T$ is $H$-invariant because $H$ stabilizes $x_0$ and $T$ is $G$-invariant.  By definition of $T$, there exist  finite subsets $F\subseteq G/H, S\subseteq G$, so that $T_{(g', x'), (g, x)}\neq 0$ implies $g^{-1}x, g'^{-1}x'\in F$ and $g^{-1}g'\in S$. Let $F=\{x_1, x_2, \cdots, x_m\}$. Since the action of $G$ on $G/H$ is transitive and $H$ stabilizes $x_0$, we have for each $x_i\in F$, there exists a unique $\lambda_i\in\Lambda$, so that $g_{\lambda_i}^{-1}x_0=x_i, i=1, \cdots, m$. Let $R=\{g_{\lambda_1}, \cdots, g_{\lambda_m}\}$. 
    
    Now suppose $M^T_{(h', Hg_{\lambda'}),(h,Hg_\lambda)}=T_{(h'g_{\lambda'}, x_0), (hg_\lambda, x_0)}\neq 0$.
    Then $g^{-1}_\lambda h^{-1}h'g_{\lambda'}\in S$ and $g_\lambda^{-1}h^{-1}x_0=g_\lambda^{-1}x_0, g_{\lambda'}^{-1}h'^{-1}x_0=g_{\lambda'}^{-1}x_0\in F$. Therefore $g_\lambda, g_{\lambda'}\in R$, which is a finite set. Since the action of $H$ on $H\backslash G$ is trivial, this proves $M^T$ satisfies condition (3) in Definition \ref{Roealg}. $M^T$ also satisfies condition (4) in Defition \ref{Roealg} since $g^{-1}_\lambda h^{-1}h'g_{\lambda'}\in S$ implies $h^{-1}h'\in g_\lambda Sg_{\lambda'}^{-1}\subseteq RSR^{-1}$, and therefore $h^{-1}h'\in H\cap RSR^{-1}$, which is a finite subset of $H$. This completes the proof that $M^T\in C[H\times H\backslash G]$. Now it is easy to check that the assignment $T\mapsto M^T$ is a $*$-algebra homomorphism. It is norm preserving because $||M^T||=||T^{x_0}||=||T||$. 
    
    For any $M\in C[H\times H\backslash G]$, let $T^{x_0}=j\circ M\circ j^{-1}\in B(\mathcal H_{G\times\{x_0\}})$, and $T^x=\iota_k\circ T^{x_0}\circ\iota_{k^{-1}}\in B(\mathcal H_{G\times\{x\}})$, where $x, x_0\in G/H$ and $k\in G$ with $kx_0=x$. $T^x$ is independent of the choice of $k$ since $T^{x_0} $ is $H$-invariant. Now define $T^M:=\oplus_{x\in G/H}T^x\in B(\mathcal H_{G\times G/H})$. Similarly, one can show that $T^M\in C[G\times G/H; 0]$, and clearly $T^{M^T}=T$ and $M^{T^M}=M$.
    Therefore $C[G\times G/H; 0]\cong C[H\times H\backslash G]$ as normed $*$-algebras. Now the rest statements of the lemma follow directly from this and Lemma \ref{lemstr2}. 
\end{proof}


\subsection{The algebras $C_L[G\times X]$ and $C^*_L(G\times X)$} Next we show the set $C_L[G\times X]$ as defined in Definition \ref{localalg} is a $*$-subalgebra of $\mathcal{BU}\big([0, \infty), C^*(G\times X)\big)$, and 
therefore its norm closure $C^*_L(G\times X)$ is a $C^*$-algebra. 

We shall need the following basic facts about equivariantly continuously controlled sets as defined in Definition \ref{ecc}. Part (3) of the following lemma is not needed here, but will be needed when we prove the functoriality of $LK^G_*$.

\begin{lem}\label{ecc_properties} Let $X$ be a left $G$-space and $E, E'\in\mathcal E^G_X$. Then
    \begin{enumerate}
    \item $E\cup E'\in\mathcal E^G_X$;
    \item $E\circ E'\cup E'\circ E\in\mathcal E^G_X$, where
    $$E\circ E'=\{(x_1, x_2, t)\in X\times X\times[0, \infty)\ | \ \exists x\in X\ \textnormal{s.t.}\ (x_1, x, t)\in E, (x, x_2, t)\in E'\},$$ and $E'\circ E$ is defined similarly;
    \item if $\phi: X\rightarrow Y$ is a $G$-equivariant continuous map between two  $G$-CW-complexes and $K\subseteq X$ is compact, then the set 
    $$F=\{(\phi(x'), \phi(x), t)\ |(x', x, t)\in E, x', x\in G\cdot K\}\cup\{(y, y, t)\ |y\in Y,\ t\ge 0\}$$ 
    belongs to $\mathcal E^G_Y$.
    \end{enumerate} 
\end{lem}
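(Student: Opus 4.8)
The plan is to check, in each of the three cases, the four defining conditions of Definition~\ref{ecc}. Conditions (2)--(4) are immediate everywhere and I would dispose of them first: symmetry and $G$-invariance are visibly preserved by unions, by the composition operation, and by images under the $G$-equivariant continuous map $\phi$; the diagonal lies in $E\cup E'$, and in $E\circ E'$ and $E'\circ E$ (take the intermediate point equal to an endpoint, using $\Delta\subseteq E,E'$), and is adjoined to $F$ by hand. The real content is the continuous control at infinity, the first condition of Definition~\ref{ecc}, which I would handle case by case.

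For part (1), given $x$ and a $G_x$-invariant open $U\ni x$, I would apply the first condition to $E$ and to $E'$ separately to get pairs $(N_1,V_1)$ and $(N_2,V_2)$, then take $N=\max\{N_1,N_2\}$ and the $G_x$-invariant open set $V=V_1\cap V_2$; any triple of $E\cup E'$ lies in one of the two and is controlled by the corresponding pair. For part (2) the point is a two-step chaining: given $x$ and $U$, I would first apply the condition for $E'$ with target $U$ to produce an intermediate $G_x$-invariant neighborhood $U'$, and then apply the condition for $E$ with target $U'$; composing the two implications controls $E\circ E'$, and the symmetric choice controls $E'\circ E$. Intersecting the two neighborhoods and taking the maximum of the four thresholds handles the union, and the ``other endpoint in $V$'' direction follows from the symmetry of $E\circ E'\cup E'\circ E$ (which is $(E\circ E')^{\mathrm{op}}=E'\circ E$). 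The one thing to respect is that the intermediate point of a composition need not lie near $x$, which is exactly why the argument must route through a genuine intermediate neighborhood rather than through $U$ directly.

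Part (3) is the substantive case. The strategy is to pull the target back along $\phi$, control $E$ at the finitely many relevant preimages of $y$, and push forward. Given $y\in Y$ and a $G_y$-invariant open $U\ni y$, set $W=\phi^{-1}(U)$, a $G_y$-invariant open subset of $X$. The relevant points are $A=(G\cdot K)\cap\phi^{-1}(y)$, and using $G$-compactness of $G\cdot K$ together with two structural facts about $G$-CW complexes---that an orbit meets a compact set in only finitely many points (a compact set lies in a finite subcomplex, and the stabilizer of a cell acts trivially on its open cell), and that $y$ has a neighborhood basis of $G_y$-invariant open sets---I would show $A=G_y\cdot C$ for some compact $C\subseteq\phi^{-1}(y)$. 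For each $c\in C$ one has $G_c\subseteq G_y$, so $W$ is a $G_c$-invariant open neighborhood of $c$; applying the first condition of Definition~\ref{ecc} to $E$ at $c$ with target $W$ gives $(N_c,V_c)$ with $V_c\subseteq W$. Covering the compact $C$ by finitely many $V_{c_1},\dots,V_{c_r}$, I set $N=\max_i N_{c_i}$ and $\mathcal V=G_y\cdot\bigcup_i V_{c_i}$, a $G_y$-invariant open set with $A\subseteq\mathcal V\subseteq W$.

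It then remains to produce the neighborhood $V$ of $y$ in $Y$, and here I would avoid invariantizing by hand by taking $V=U\setminus\overline{\phi(D)}$ with $D=(G\cdot K)\setminus\mathcal V$: since $D$ is $G_y$-invariant and $\phi$ is equivariant, $V$ is automatically $G_y$-invariant and open. The crux is that $y\notin\overline{\phi(D)}$---equivalently, that small neighborhoods of $y$ pull back within $G\cdot K$ into $\mathcal V$---which I would establish from the finiteness above: only finitely many translates $gK$ (modulo $G_y$) have image meeting a fixed small neighborhood of $y$, and for each such translate the image of the compact set $K\setminus g^{-1}\mathcal V$ is closed and misses $y$, giving a neighborhood of $y$ avoiding the corresponding part of $\phi(D)$; a finite intersection finishes it. Granting $y\notin\overline{\phi(D)}$, the first condition for $F$ is immediate: if $(\phi(x'),\phi(x),t)\in F$ with $t>N$ and $\phi(x)\in V$, then $x\in G\cdot K$ and $x\notin D$, so $x\in\mathcal V$, say $x=\gamma v$ with $\gamma\in G_y$ and $v\in V_{c_i}$; applying $\gamma^{-1}$ and the control of $E$ at $c_i$ forces $\gamma^{-1}x'\in W$, whence $\phi(x')\in\gamma U=U$. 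I expect the genuine obstacle to be exactly this point-set bookkeeping in part (3)---transferring continuous control across the non-injective, non-proper map $\phi$ between non-locally-compact $G$-CW complexes---so I would first isolate and prove (or cite \cite{Luck2}) the two $G$-CW facts on which it rests.
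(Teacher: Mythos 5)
Your parts (1) and (2) are correct and amount to the routine verifications the paper leaves to the reader; chaining two successive neighborhoods for the composition and invoking the symmetry of $E\circ E'\cup E'\circ E$ to get both directions of the control condition are exactly the right points of care. For part (3) you take a genuinely different route: the paper argues by contradiction, extracting from $G\cdot K$ a convergent sequence $g_kx_k\to x$ and invoking the slice theorem of \cite{BFJR} (a descending chain of $G_y$-invariant neighborhoods $V^k$ with $gV^k\cap V^k=\emptyset$ for $g\notin G_y$ and $\bigcap_k\overline{G\cdot V^k}=G\cdot y$), whereas you construct the controlling pair $(N,V)$ directly. Much of your construction is sound: $A=G_y\cdot C$ does hold (it needs only that an orbit meets a compact set in finitely many points), and the set of $g$ with $g\phi(K)$ meeting a suitable neighborhood of $y$ is contained in finitely many cosets $G_yh_j$ (take the complement of $\bigcup_{g\,:\,y\notin gL}gL$ for a finite subcomplex $L\supseteq\phi(K)$; an arbitrary union of subcomplexes is a closed subcomplex).

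The genuine gap is the final step, ``a finite intersection finishes it.'' Each coset $G_yh_j$ contributes infinitely many translates $\gamma h_jK$, $\gamma\in G_y$, and the corresponding piece of $\phi(D)$ is the full orbit $G_y\cdot\phi(h_jK\setminus\mathcal V)$ of a compact set missing $y$; a neighborhood of $y$ avoiding the compact set $\phi(h_jK\setminus\mathcal V)$ does not automatically avoid its $G_y$-orbit, and the fact you would need to promote it --- that every neighborhood of $y$ contains a $G_y$-invariant one --- is \emph{false} for $G$-CW complexes. (Take $Y$ to be the wedge of circles obtained from one $G$-fixed $0$-cell $p$ and one free $1$-cell $G\times D^1$ attached by the constant map, $G$ infinite and $G_p=G$: the open set $\{p\}\cup\bigcup_{g}\,\mathrm{im}\,\big(\{g\}\times([0,\epsilon_g)\cup(1-\epsilon_g,1])\big)$ with $\inf_g\epsilon_g=0$ contains no $G$-invariant open neighborhood of $p$, because $G$ acts trivially on the $D^1$-coordinate.) The assertion $y\notin\overline{\phi(D)}$ is true, but to prove it you must control the whole $G_y$-orbit at once, and the only available tool is precisely the slice theorem: assume a net in $\phi(D)$ converges to $y$, use compactness of $K$ and the properties $gV^k\cap V^k=\emptyset$ ($g\notin G_y$) and $\bigcap_k\overline{G\cdot V^k}=G\cdot y$ to force the offending group elements into $G_y$, and then use the $G_y$-invariance of $\mathcal V$ to reach a contradiction. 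At that point you have essentially reproduced the paper's argument, so the crux of your direct construction still rests on the slice theorem, which is strictly stronger than the two $G$-CW facts you propose to isolate; I would either repair the last step as above or adopt the paper's contradiction argument outright.
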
   
\begin{proof}
Parts (1) and (2) are easily verified. The proof of part (3) is very similar to that of \cite[Lemma 3.3]{BFJR}. It is  clear that $F$ is $G$-invariant, symmetric and contains the diagonal. We need to check condition (1) of Definition \ref{ecc} is satisfied by $F$. We prove by contradiction. Suppose not, then there exist $y\in Y$ and a $G_y$-invariant open neighborhood $U$ of $y$ so that for any $N>0$ and any $G_y$-invariant open neighborhood $V\subseteq U$ of $y$, there exist $(x', x, t)\in E$ with  $x', x\in G\cdot K$, $t>N$, and one of $\phi(x'), \phi(x)$ belongs to $V$, but the other does not belong to $U$. 

Since $Y$ is a $G$-CW complex, the \textit{slice theorem} (\cite[Proposition 3.4]{BFJR}) applies, so that we can find a descending sequence $\{V^k\}_{k\in\mathbb N}$ of open neighborhoods of $y$ in $Y$ with the following properties: 
\begin{itemize}
\item[(i)] each $V^k$  is $G_{y}$-invariant;

\item[(ii)] $gV^k\cap V^k=\emptyset$ if $g\notin G_{y}$;

\item[(iii)] $\bigcap_{k\ge 1}\overline{G\cdot V^k}=G\cdot y$.
\end{itemize}
We may assume $V^k\subseteq U, \forall k\in\mathbb N$. Then we can find, for each $k\in N$, $(x_k', x_k, t_k)\in E$ with
$x_k', x_k\in G\cdot K$, $t_k>k$, and one of  $\phi(x_k'), \phi(x_k)$ belongs to $V^k$, but the other does not belong to $U$. As $E$ is symmetric, we may assume $\phi(x_k)\in V^k, \phi(x_k')\notin U,  \forall k\in\mathbb N$.  As $x_k\in G\cdot K$ and $K$ is compact, by passing to a subsequence, we can find $g_k\in G$, so that $g_kx_k$ converges to a point, say $x$, in $X$. For a fixed $n\in\mathbb N$, we have, for all $k>n$,
$$\phi(g_kx_k)=g_k\phi(x_k)\in g_kV^k\subseteq g_kV^n\subseteq G\cdot V^n.$$
Hence, by letting $k\to\infty$, we have $\phi(x)\in\overline{G\cdot V^n}, \forall n\in\mathbb N$. So by property (iii) above, $\phi(x)\in G\cdot y$. Therefore there exists $g\in G$ so that $g\phi(x)=y$. Hence when $k$ is large enough, we have 
$$gg_k\phi(x_k)=g\phi(g_kx_k)\in V^1.$$
As $\phi(x_k)\in V^1$, by property (ii) above, we have, for $k$ large enough, $gg_k\in G_y$. Hence for large $k$, we have
\begin{align}\label{notinU}
gg_k\phi(x_k)\in V^k,\ gg_k\phi(x_k')\notin U.
\end{align}

Now consider $\phi^{-1}(U)$, which is a $G_{gx}$-invariant open neighborhood of $gx\in X$. As $E\in\mathcal E^G_X$, there exist $N>0$ and a $G_{gx}$-invariant open neighborhood $W\subseteq\phi^{-1}(U)$ of $gx$ satisfying condition (1) of Definition \ref{ecc}. As $gg_kx_k$ converges to $gx$ and $t_k$ tends to $\infty$, we have for $k$ large enough, $gg_kx_k\in W, t_k>N$. This, together with $(gg_kx_k', gg_kx_k, t_k)\in E$, implies $gg_kx_k'\in\phi^{-1}(U)$. Hence $gg_k\phi(x_k')\in U$ when $k$ is large enough, which contradicts to (\ref{notinU}). This completes the proof.
\end{proof}

\begin{lem}\label{localalglem} 
	$C_L[G\times X]$ is a $*$-subalgebra of $\mathcal{BU}\big([0, \infty), C^*(G\times X)\big)$. Hence $C^*_L(G\times X)$ is a $C^*$-algebra.
\end{lem}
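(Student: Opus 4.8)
The plan is to verify directly that $C_L[G\times X]$ is closed under the four operations defining a $*$-subalgebra (scalar multiplication, addition, adjoint, composition) and that each of its elements genuinely lies in $\mathcal{BU}\big([0,\infty), C^*(G\times X)\big)$; the final assertion that $C^*_L(G\times X)$ is a $C^*$-algebra then follows at once, since the norm closure of a $*$-subalgebra of the $C^*$-algebra $\mathcal{BU}\big([0,\infty), C^*(G\times X)\big)$ is again a $C^*$-algebra. The containment is essentially definitional: any $f\in C_L[G\times X]$ takes values in $C[G\times X]\subseteq C^*(G\times X)$ and is bounded and uniformly continuous by hypothesis. Since $C[G\times X]$ is already a $*$-algebra by Lemma \ref{lemstr1}, the expressions $(af)(t)$, $(f+f')(t)$, $f(t)^*$ and $f(t)\circ f'(t)$ all lie in $C[G\times X]$ pointwise; the real content is to exhibit, for each operation, a \emph{single} tuple $(S, K, \Lambda, E)$ of control data that works for all $t$.

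For the pointwise-algebraic operations I would argue as follows. Scalar multiplication keeps the same control data. For a sum, $\operatorname{supp}\big((f+f')(t)\big)$ is contained in the union of the supports, so $f+f'$ is controlled by $(S\cup S', K\cup K', \Lambda\cup\Lambda', E\cup E')$, where $E\cup E'\in\mathscr E^G_X$ by Lemma \ref{ecc_properties}(1), $K\cup K'$ is compact, $\Lambda\cup\Lambda'$ is $G$-invariant and countable, and $S\cup S'$ is finite. For the adjoint, Lemma \ref{T-properties}(1) gives $f^*(t)_{(g',x'),(g,x)}=\big(f(t)_{(g,x),(g',x')}\big)^*$, so a nonvanishing entry of $f^*(t)$ corresponds to the transposed entry of $f(t)$; feeding this through the $(S,K,\Lambda,E)$-control of $f$ and invoking the symmetry of $E$ (condition (2) of Definition \ref{ecc}) shows that $f^*$ is $(S^{-1},K,\Lambda,E)$-controlled. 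Boundedness and uniform continuity are preserved here because the adjoint is an isometry and because sums of bounded, uniformly continuous functions are again bounded and uniformly continuous.

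The main step is the product $(f\cdot f')(t)=f(t)\circ f'(t)$. Boundedness and uniform continuity follow from submultiplicativity of the norm together with $\|f(t)f'(t)-f(s)f'(s)\|\le \|f\|\,\|f'(t)-f'(s)\|+\|f(t)-f(s)\|\,\|f'\|$ and the boundedness and uniform continuity of $f,f'$. For the control data, Lemma \ref{T-properties}(3) gives $\operatorname{supp}\big(f(t)\circ f'(t)\big)\subseteq\operatorname{supp}(f(t))\circ\operatorname{supp}(f'(t))$, so a nonzero entry $\big(f(t)\circ f'(t)\big)_{(g',x'),(g,x)}$ forces an intermediate index $(h,z)$ with $f(t)_{(g',x'),(h,z)}\neq 0$ and $f'(t)_{(h,z),(g,x)}\neq 0$. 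Writing out the control of $f$ (with data $(S_1,K_1,\Lambda_1,E_1)$) at $\big((g',x'),(h,z)\big)$ and that of $f'$ (with data $(S_2,K_2,\Lambda_2,E_2)$) at $\big((h,z),(g,x)\big)$ and combining them yields $g^{-1}g'=(g^{-1}h)(h^{-1}g')\in S_2 S_1$, together with $x',x\in\Lambda_1\cup\Lambda_2$, $g^{-1}x\in K_2$, $(g')^{-1}x'\in K_1$, and finally $(x',z,t)\in E_1$, $(z,x,t)\in E_2$, whence $(x',x,t)\in E_1\circ E_2$. Thus $f\cdot f'$ is controlled by $\big(S_2 S_1,\;K_1\cup K_2,\;\Lambda_1\cup\Lambda_2,\;E_1\circ E_2\cup E_2\circ E_1\big)$.

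I expect the only genuinely delicate point to be verifying that this composite control set remains equivariantly continuously controlled at infinity: $S_2 S_1$ is finite, $K_1\cup K_2$ is compact, and $\Lambda_1\cup\Lambda_2$ is a $G$-invariant countable set, all routinely, but the fact that $E_1\circ E_2\cup E_2\circ E_1\in\mathscr E^G_X$ is exactly the nontrivial closure property supplied by Lemma \ref{ecc_properties}(2). Everything else in the argument is formal bookkeeping, and once the four operations have been handled the $C^*$-algebra statement is immediate.
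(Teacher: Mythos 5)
Your argument is correct and follows essentially the same route as the paper's proof: the scalar, sum, and adjoint cases are handled by elementary bookkeeping, and the product is controlled by $(S_2S_1,\ K_1\cup K_2,\ \Lambda_1\cup\Lambda_2,\ (E_1\circ E_2)\cup(E_2\circ E_1))$ via Lemma \ref{T-properties} and Lemma \ref{ecc_properties}(2), exactly as in the paper. The only difference is that you spell out the easy cases the paper dismisses as routine.
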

\begin{proof}Let $f, g\in C_L[G\times X]$ and $c\in\mathbb C$ be a constant. It is easy to check that $cf, f+g, f^*\in C_L[G\times X]$. For $fg$, first it is still bounded and uniformly continuous. Now as in definition \ref{localalg}, let $f$ be $(S_1, K_1, \Lambda_1, E_1)$-controlled and $g$ be $(S_2, K_2, \Lambda_2, E_2)$-controlled. Suppose $(f(t)g(t))_{(g', x'), (g, x)}\ne 0$, then by Lemma \ref{T-properties}, there exists $(g'', x'')$ so that 
$f(t)_{(g', x'), (g'', x'')}\ne 0$ and $g(t)_{(g'', x''), (g, x)}\ne 0$. From here, we see that $fg$ is $(S_2S_1, K_1\cup K_2, \Lambda_1\cup\Lambda_2, (E_1\circ E_2)\cup(E_2\circ E_1))$-controlled, as $(E_1\circ E_2)\cup(E_2\circ E_1)\in\mathcal E^G_X$ by Lemma \ref{ecc_properties}, we get $fg\in C_L[G\times X]$. 
\end{proof}


\subsection{A $K$-theory lemma} Next we prove a very useful $K$-theory lemma that will be used several times later. It is a generalization of  \cite[Section 3, Lemma 2]{HRY}.

\begin{lem}\label{conjugate} Let $\Phi: \mathcal A\rightarrow\mathcal B$ 
	be a homomorphism of $C^*$-algebras, and $\mathcal C$ be a unital $C^*$-algebra which contains $\mathcal B$ as a $C^*$-subalgebra (not necessary as an ideal). Suppose $w\in\mathcal C$ is a partial isometry so that $$\Phi(a)w^*w=\Phi(a),\ w\Phi(a)w^*,\ w\Phi(a)\in\mathcal B$$ 
	for any $a\in\mathcal A$. Then Ad$(w)\circ\Phi(a)=w\Phi(a)w^*$ is a $C^*$-algebra homomorphism from $\mathcal A$ to $\mathcal B$, and on $K$-groups, we have $(Ad(w)\circ\Phi)_*=\Phi_*: K_*(\mathcal A)\rightarrow K_*(\mathcal B)$.
\end{lem}

\begin{proof}
	It is easy to see that $Ad(w)\circ\Phi$ is a $C^*$-algebra homomorphism form $\mathcal A$ to $\mathcal B$. To prove the second statement, we first consider the case when $w\in\mathcal C$ is a unitary. Let $j: \mathcal B\rightarrow M_2(\mathcal B)$ be the inclusion $$j(b)=
	\begin{bmatrix}
	b & 0\\
	0 & 0
	\end{bmatrix}$$
	We show $j\circ(Ad(w)\circ\Phi)$ is homotopic to $j\circ\Phi$, which will complete the proof since $j_*$ is an isomorphism on $K$-groups. 
	
	Let $$u(t)=
	\begin{bmatrix}
	\cos(\frac{\pi}{2}t)w & \sin(\frac{\pi}{2}t)\\
	-\sin(\frac{\pi}{2}t) & \cos(\frac{\pi}{2}t)w^*
	\end{bmatrix}$$
	Then $u(t)\in M_2(\mathcal C)$ is unitary for all $t\in\mathbb R$.
	Now define $$H_t(a)=u(t)(j(\Phi(a)))u(t)^*.$$ One checks, using the assumptions, that $H_t(a)\in M_2(\mathcal B)$ for all $a\in\mathcal A, t\in\mathbb R$.
	Hence $H_t$ is a homotopy from $H_0=j\circ(Ad(w)\circ\Phi)$ to
	$H_1$, which is given by 
	$$H_1(a)=\begin{bmatrix}
	0 & 0\\
	0 & \Phi(a)
	\end{bmatrix}$$
	A similar homotopy can be constructed between $H_1$ and $j\circ\Phi$, and this completes the proof when $w$ is a unitary.
	
	In general, we let 
	$$u=\begin{bmatrix}
	w & 1-ww^*\\
	w^*w-1 & w^*
	\end{bmatrix}$$
	Then $u\in M_2(\mathcal C)$ is a unitary. One checks, using the assumptions, that 
	$$u\cdot j(\Phi(a))\cdot u^*=j(w\Phi(a)w^*)\in M_2(\mathcal B), \ \ u\cdot j(\Phi(a))=j(w\Phi(a))\in M_2(\mathcal B)$$
	for any $a\in\mathcal A$. Hence by the special case that we have proved above, we get
	$$(Ad(u)\circ(j\circ\Phi))_*=(j\circ\Phi)_*$$
	On the other hand, $u\cdot j(\Phi(a))\cdot u^*=j(w\Phi(a)w^*)$ implies
	$$Ad(u)\circ(j\circ\Phi)=j\circ(Ad(w)\circ\Phi)$$
	Therefore $$(j\circ\Phi)_*=(j\circ(Ad(w)\circ\Phi))_*$$
	which implies $\Phi_*=(Ad(w)\circ\Phi)_*$ since $j_*$ is an isomorphism. This completes the proof of the lemma.
\end{proof}

 \subsection{The algebras $C_L^*(G\times X; A), C_L^*(G\times X; \langle A\rangle)$}

    We now turn to prove the following lemma. It is basic to many of our constructions and arguments. To state it, we recall that for a $G$-invariant subspace $A\subseteq X$ of a left $G$-space $X$, the sets $C_L[G\times X; A], C_L[G\times X; \langle A\rangle], C_L^*(G\times X; A), C_L^*(G\times X; \langle A\rangle)$ have been defined in Definition \ref{local_alg_subspace}.

\begin{lem}\label{ideal} Let $X$ be a left $G$-space and $A\subseteq X$ be a $G$-invariant subspace. 
	Then
	\begin{enumerate}
		\item $C_L[G\times X; A]$ and $C_L[G\times X; \langle A\rangle]$ are $*$-subalgebras of $C_L[G\times X]$, and $C_L[G\times X; A]\subseteq C_L[G\times X; \langle A\rangle]$;
		\item $C_L^*(G\times X; \langle A\rangle)$ is a two-sided ideal of $C^*_L(G\times X)$;
		\item If $A$ is a closed subset of $X$, then $C^*_L(G\times X; A)$ can be naturally identified with $C^*_L(G\times A)$.
		\item If $A$ is a closed subset of $X$, then the inclusion induces an isomorphism on the level of $K$-groups  
		$$K_i(C^*_L(G\times A))\cong K_i(C^*_L(G\times X; \langle A\rangle)), i=0, 1.$$
	\end{enumerate}
\end{lem}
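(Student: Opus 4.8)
The plan is to dispatch these by the support calculus of Lemma~\ref{T-properties}. Closure of $C_L[G\times X; A]$ under $+$, scalar multiplication and ${}^*$ is immediate (the support of a sum lies in the union of supports, and $(f^*(t))_{(g',x'),(g,x)}=(f(t)_{(g,x),(g',x')})^*$ reverses a component), while a nonzero component of a product $fh$ forces a common middle index by Lemma~\ref{T-properties}(2); in each case the condition $x,x'\in A$ propagates. For $C_L[G\times X;\langle A\rangle]$ the same scheme works, now carrying the auxiliary data $(E,K)$: unions of the $E$'s stay in $\mathscr{E}^G_X$ by Lemma~\ref{ecc_properties}(1), and for a product the witnesses \emph{split} --- the column witness $a\in A$ comes from the right factor and the row witness $a'\in A$ from the left factor, so one takes $E=E_f\cup E_h$ and $K=K_f\cup K_h$. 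The inclusion $C_L[G\times X; A]\subseteq C_L[G\times X;\langle A\rangle]$ follows by choosing $a=x$, $a'=x'$ and using $\Delta\subseteq E$ from Definition~\ref{ecc}(4). For (2) I would first show $C_L[G\times X;\langle A\rangle]$ is a two-sided ideal of $C_L[G\times X]$: in a left product $hf$ the column witness of $f$ survives unchanged, the row index $x'$ acquires a witness through $E_f\circ E_h$ (Lemma~\ref{ecc_properties}(2), unioned with $E_f$ to stay in $\mathscr{E}^G_X$), and the only delicate point is that the compact control set must be enlarged to $K_f\cup K_h\cup S_h^{-1}K_f$ to absorb the $G$-propagation of $h$, which is still compact because $G$ acts by homeomorphisms. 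The right-ideal property then follows by taking adjoints via (1), and passing to norm closures (multiplication being continuous) yields that $C^*_L(G\times X;\langle A\rangle)$ is a closed two-sided ideal of $C^*_L(G\times X)$.

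\textbf{Part (3).} Closedness of $A$ enters twice. Restriction sends $f\in C_L[G\times X; A]$, whose components vanish off $G\times A$, to the same matrix viewed on $\mathcal H_{G\times A}$; since $\mathcal H_{G\times X}=\mathcal H_{G\times A}\oplus\mathcal H_{G\times(X\setminus A)}$ and $f(t)=f(t)\oplus 0$, this is an isometric $*$-homomorphism. To land in $C_L[G\times A]$ one replaces the control data by $K\cap A$ (compact in $A$ since $A$ is closed), $\Lambda\cap A$, and $E\cap(A\times A\times[0,\infty))$; the last lies in $\mathscr{E}^G_A$ by the complement trick: given a $G_x$-invariant open $U\subseteq A$, the set $U'=X\setminus(A\setminus U)$ is $G_x$-invariant open in $X$ with $U'\cap A=U$, so the neighborhood produced for $U'$ by $E\in\mathscr{E}^G_X$ restricts to a witness for $U$. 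Conversely, extension by zero maps $C_L[G\times A]$ back, using that $E_A\cup\Delta_X\in\mathscr{E}^G_X$ (checked by the same complement trick, splitting on whether the base point lies in $A$). These are mutually inverse isometric $*$-isomorphisms, identifying the closures $C^*_L(G\times X; A)\cong C^*_L(G\times A)$.

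\textbf{Part (4), the crux.} By (3) this reduces to showing the inclusion $\iota\colon \mathcal A\hookrightarrow\mathcal B$ of $\mathcal A:=C^*_L(G\times X; A)$ into $\mathcal B:=C^*_L(G\times X;\langle A\rangle)$ is an isomorphism on $K$-theory. The strategy is to construct a single comparison operator and invoke Lemma~\ref{conjugate} twice. Concretely I would build a $G$-invariant partial isometry $w$, living in the multiplier algebra of $\mathcal{BU}\big([0,\infty),B(\mathcal H_{G\times X})\big)$ and given by a uniformly continuous family $w(t)$, that ``folds the $\mathscr{E}^G_X$-neighborhood of $A$ onto $A$'': $w(t)$ carries the summand $\mathcal H_x$ of each support point $x$ isometrically into $\mathcal H_a$ for a choice of $E$-partner $a\in A$ of $x$ at time $t$, with $w(t)w(t)^*$ supported on $\mathcal H_{G\times A}$ and $w(t)^*w(t)$ dominating the supports of both algebras. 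Granting $w\,\mathcal B\,w^*\subseteq\mathcal A$, set $\psi:=\mathrm{Ad}(w)\colon\mathcal B\to\mathcal A$. Applying Lemma~\ref{conjugate} to $\Phi=\mathrm{id}_{\mathcal B}$ shows $\iota\circ\psi=\mathrm{Ad}(w)$ induces the identity on $K_*(\mathcal B)$, and applying it to $\Phi=\mathrm{id}_{\mathcal A}$ shows $\psi\circ\iota=\mathrm{Ad}(w)$ induces the identity on $K_*(\mathcal A)$; hence $\iota_*$ is an isomorphism.

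The hard part will be the construction of $w$, and the obstructions are exactly those typical of a localization-algebra argument. The $E$-partner $a$ may depend on $t$, so $w$ must be a $t$-family arranged to be uniformly continuous; the folding must be performed $G$-equivariantly, so partners are chosen on orbit representatives and spread by the action; several near-$A$ points may share a partner, so I would use infinite-dimensionality of $\mathcal H$ to send them into orthogonal infinite-dimensional subspaces of the target $\mathcal H_a$. Most importantly, I must verify that conjugation by $w$ preserves every control condition --- local compactness, $G$-finiteness, finite $G$-propagation, and the continuous control $E$ --- which is precisely where Lemma~\ref{ecc_properties} reenters, since the new propagation is governed by a composite of $E$ with the folding set. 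Verifying these membership conditions, so that $w$ meets the hypotheses $\Phi(a)w^*w=\Phi(a)$ and $w\Phi(a)w^*,\,w\Phi(a)\in\mathcal B$ of Lemma~\ref{conjugate}, is the technical heart of the proof; everything else is formal.
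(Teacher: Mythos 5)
Parts (1)--(3) of your proposal are correct and follow the paper's own route: the witness-splitting for products, the enlargement of the control set by $S^{-1}K$ for the ideal property, adjoints for the right-ideal half, and the identification $\mathscr E^G_A=\iota^{-1}\mathscr E^G_X$ via the complement trick (the paper states the latter more tersely, but your argument is the intended one). The real content of the lemma is part (4), and there your sketch has the right geometric idea (fold the $E$-neighborhood of $A$ onto $A$ by partial isometries and run Lemma~\ref{conjugate}; your ``apply it twice to get $\iota_*\psi_*=\mathrm{id}$ and $\psi_*\iota_*=\mathrm{id}$'' endgame is a harmless variant of the paper's four-term sequence of inclusions) but a genuine gap in the construction of the comparison operator, beyond what can be waved off as ``the technical heart.''

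First, no \emph{single} partial isometry $w$ can satisfy $w\,\mathcal B\,w^*\subseteq\mathcal A$ for $\mathcal B=C^*_L(G\times X;\langle A\rangle)$: the folding must send $x$ to an $E$-partner in $A\cap K$, and different elements of $\mathcal B$ are controlled by different pairs $(K,E)$; moreover separability of $\mathcal H$ forbids folding uncountably many summands $\mathcal H_x$ into one $\mathcal H_a$, so the folding can only be defined on a countable support set. The paper therefore first writes $C^*_L(G\times X;\langle A\rangle)$ as a colimit of subalgebras $C^*_L(G\times X;\langle A\rangle,K,E,B)$ over compact $K$, controlled $E$, and $G$-invariant $B\supseteq A$ with $B-A$ countable, and proves the $K$-isomorphism piece by piece; this reduction is not a convenience but a prerequisite for your $w$ to exist at all, and it is absent from your plan. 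Second, your $w$ is described as a \emph{uniformly continuous} family $w(t)$ implementing a $t$-dependent folding $\theta_t$; since the partner assignments change discretely as $t$ grows, such a family cannot be continuous while preserving the support conditions. The paper's actual operator is the $2\times 2$ rotation interpolation $U_t=R(t-k+1)(V_{k-1}\oplus V_k)R^*(t-k+1)$, which is itself \emph{discontinuous} at the integers; uniform continuity is recovered only for the products $U_t(f(t)\oplus 0)$ and $U_t(f(t)\oplus 0)U_t^*$, because the jump of $U_t$ at $t=k$ occurs entirely in the summand annihilated by $f(t)\oplus 0$. Without the $M_2$ doubling and this cancellation there is no way to reconcile continuity of the conjugated elements with the $t$-dependence of the folding, so the single-operator formulation as written would fail.
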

\begin{proof}
	(1) We only check that $C_L[G\times X; \langle A\rangle]$ is a $*$-subalgebra. The only nontrivial part is to show $C_L[G\times X; \langle A\rangle]$ is closed under multiplcation. Indeed something stronger holds.  Let $f\in C_L[G\times X; \langle A\rangle], g\in C_L[G\times X]$.  Suppose $$\big(f(t)g(t)\big)_{(g', x'), (g, x)}=\sum_{(g'', x'')\in G\times X}f(t)_{(g', x'), (g'', x'')}g(t)_{(g'', x''), (g, x)}\neq 0,$$
	then there exists $(g'', x'')\in G\times X$ so that $f(t)_{(g', x'), (g'', x'')}\neq 0$ and $ g(t)_{(g'', x''), (g, x)}\neq 0$. By definition,  there exist $E_1, E_2\in\mathscr E^G_X$, compact $K\subseteq X$, finite $S\subseteq G$ and $a', a''\in A$,  so that $(a', x', t), (a'', x'', t)\in E_1$, $(x'', x, t)\in E_2$, $g^{-1}g''\in S$ and $(g')^{-1}a', (g'')^{-1}a'', (g')^{-1}x', (g)^{-1}x\in K$. Therefore $(a'', x, t)\in E_1\circ E_2\subseteq E_1\circ E_2\cup E_2\circ E_1\in \mathscr E^G_X$ and $g^{-1}a''\in SK$. Let $E=E_1\cup E_1\circ E_2\cup E_2\circ E_1\in\mathscr E^G_X$ and $K'=K\cup SK$. Then
	$K'$ is compact,  $(a', x', t), (a'', x, t)\in E$ and $(g')^{-1}a', g^{-1}a'', (g')^{-1}x', g^{-1}x\in K'$. Therefore $fg\in C_L[G\times X; \langle A\rangle]$.  
	Similar argument shows $gf\in C_L[G\times X; \langle A\rangle]$. Now $C_L[G\times X; A]\subseteq C_L[G\times X; \langle A\rangle]$ follows from  $\Delta\in\mathscr E^G_X$.
	
	(2) This follows from the proof of part (1).
	
	(3) We first note that $\mathscr E^G_A=\iota^{-1}\mathscr E^G_X$, where $\iota: A\times A\times [0, \infty)\rightarrow X\times X\times [0, \infty)$ is the inclusion. This follows easily from the definition and the fact that every $G_a$-invariant open neighborhood of $a\in A$ in $A$ is of the form $U\cap A$, where $U$ is a $G_a$-invariant open neighborhood of $a$ in $X$. It gives a natural inclusion $C_L[G\times A]\rightarrow C_L[G\times X; A]$. Now this inclusion is also surjective since $\mathscr E^G_A=\iota^{-1}\mathscr E^G_X$ and $A\cap K$ is compact for compact $K\subseteq X$ if $A$ is closed in $X$.
	
	(4) The proof here is a bit lengthy. For any $E\in\mathscr E_X^G$, and compact $K\subseteq X$, define $C_L[G\times X; \langle A\rangle, K, E]$ to be the set of all $f\in C_L[G\times X]$ with the property that if $f(t)_{(g',x'),(g, x)}\ne 0$, then $(g')^{-1}x', g^{-1}x\in K$ and there exist $a, a'\in A$ so that $g^{-1}a, (g')^{-1}a'\in K$ and
	$(a, x, t), (a', x', t)\in E$. We also define $C_L[G\times X; A, K]$
	to be the subset of $f\in C_L[G\times X; A]$ with the property that $f(t)_{(g', x'), (g, x)}\ne 0$ implies $g^{-1}x, (g')^{-1}x'\in K$.
	
	It is easy to check that $C_L[G\times X; A,K]$ and $C_L[G\times X; \langle A\rangle, K, E]$ are $*$-subalgebras of $C_L[G\times X; \langle A\rangle]$, and
	$C_L[G\times X; A, K]\subseteq C_L[G\times X; \langle A\rangle, K, E]$ since $\Delta\subseteq E$. Let $C^*_L(G\times X; A, K)$ and $C^*_L(G\times X; \langle A\rangle, K, E)$ be their closures in $C^*_L(G\times X)$. By definition of $C_L[G\times X; \langle A\rangle]$, we have 
	$$C_L^*(G\times X; \langle A\rangle)=\text{colim}_{K\subseteq X\ \text{compact},\ E\in\mathscr E^G_X}C^*_L(G\times X; \langle A\rangle, K, E)$$
	where the direct colimit is taking over all compact $K\subseteq X$, $E\in\mathscr E^G_X$ under inclusions. We also have
	$$C^*_L(G\times X; A)=\text{colim}_{K\subseteq X\ \text{compact}}C^*_L(G\times X; A, K)$$
	Therefore, it suffices to show that the inclusion
	$$C^*_L(G\times X; A, K)\longrightarrow C^*_L(G\times X; \langle A\rangle, K, E)$$
	induces isomorphisms on $K$-groups for any $E\in\mathscr E^G_X$.
	
	Let $B\subseteq X$ be any $G$-invariant subspace with the property that
	$A\subseteq B$ and $B-A$ is countable. 
	Let 
	$$C_L[G\times X; \langle A\rangle, K, E, B]=C_L[G\times X; \langle A\rangle, K, E]\cap C_L[G\times X; B]$$
	By the definition of elements of $C_L[G\times X]$, we have
	$$C_L[G\times X; \langle A\rangle, K, E]=\bigcup_{B\subseteq X}C_L[G\times X; \langle A\rangle, K, E, B]$$
	where the union is taking over all $G$-invariant subspaces $B\subseteq X$ with the property that $A\subseteq B$ and $B-A$ is countable. Therefore
	$$C_L^*(G\times X; \langle A\rangle, K, E)=\text{colim}_{B\subseteq X}C^*_L(G\times X; \langle A\rangle, K, E, B)$$
	where the direct colimit is taking over all $B$ with the property mentioned above. Therefore, it suffices to show the inclusion 
	$$\iota: C_L^*(G\times X; A, K)\longrightarrow C^*_L(G\times X; \langle A\rangle, K, E, B)$$
	induces isomorphisms on $K$-groups for every compact $K\subseteq X, E\in\mathscr E^G_X$ and $G$-invariant $B\subseteq X$ with $A\subseteq B$ and $B-A$ countable.
	
	Now fix $K, E, B$. We may assume $K\cap A\ne\emptyset$, otherwise both of  $C_L^*(G\times X; A, K)$ and $C^*_L(G\times X; \langle A\rangle, K, E, B)$ are trivial. For each $t\ge 0$, let 
	$$E_t=\{b\in B\cap K\ |\ \exists\ a\in A\cap K,\ s\ge t-1,\ \text{so that}\ (a, b, s)\in E\}$$
	Clearly $E_t\subseteq E_{t'}$ if $t\ge t'$, and $A\cap K\subseteq E_t$ for each $t\ge 0$ since $\Delta\subseteq E$. Choose a map
	$$\theta_t: B\longrightarrow A$$ 
	so that  $\theta_t(a)=a$ if $a\in A$, and if $b\in E_t$, then $\theta_t(b)\in A\cap K, (\theta_t(b), b, s)\in E$ for some $s\ge t-1$.
    Note that for each $a\in A$, $\theta_t^{-1}(a)$ is countable. Therefore, we can choose, for each $a\in A$, an isometry of Hilbert spaces:
	$$W_{t, a}: \mathcal H_{\theta_t^{-1}(a)}\longrightarrow \mathcal H_{\{a\}}$$
	which gives an isometry of Hilbert spaces:
	$$W_t: \mathcal H_{B}\longrightarrow\mathcal H_{A}$$
	$W_t$ gives rise to an isometry of Hilbert spaces
	$$V_t: \mathcal H_{G\times B}\longrightarrow\mathcal H_{G\times A}$$
	which is uniquely determined by the properties that
	$$V_t(\mathcal H_{\{g\}\times B})\subseteq\mathcal H_{\{g\}\times A},\ \ \ V_t|_{\mathcal H_{\{e\}\times B}}=W_t$$
	and $V_t$ is $G$-invariant. $V_t$ can be extended, in the obvious way, to a $G$-invariant partial isometry from $\mathcal H_{G\times X}$ to $\mathcal H_{G\times X}$. We still denote this partial isometry by $V_t$.
	

	By definitions of $V_t$ and $\theta_t$, we have 
	$$(V_t)_{(g', x'), (g, x)}\ne 0\ \Rightarrow\ g=g',\ x\in B,\ x'\in A,\  \theta_t(g^{-1}x)=g^{-1}x'$$
	If in addition $g^{-1}x\in E_t$, then $g^{-1}x'\in A\cap K$, and $ (x',x, s)\in E$ for some $s\ge t-1$.

	Now 
	define a family of $G$-invariant partial isometries $U_t, t\ge 0$ from $\mathcal H_{G\times X}\oplus\mathcal H_{G\times X}$ to $\mathcal H_{G\times X}\oplus\mathcal H_{G\times X}$ by
	$$U_t=R(t-k+1)(V_{k-1}\oplus V_{k})R^*(t-k+1),\ k-1\le t<k,\ k=1, 2, \cdots$$
	where $$R(t)=
	\begin{bmatrix}
	\ \ \cos(\frac{\pi t}{2}) & \sin(\frac{\pi t}{2})\\
	-\sin(\frac{\pi t}{2}) & \cos(\frac{\pi t}{2})
	\end{bmatrix}$$
	Notice that $U_t$ is uniformly continuous on $[k-1, k)$ and 
	$$\lim_{t\to k^{+}}U_t=U_k=V_k\oplus V_{k+1},\ \ \ \  \lim_{t\to k^{-}}U_t=V_k\oplus V_{k-1}$$
	For any $f\in C_L[G\times X; \langle A\rangle, K, E, B]$, define
	$$\big(\text{Ad}(U)\circ j\big)(f)(t)=U_t(f(t)\oplus 0)U^*_t,\ t\ge 0$$ 
	where $j(f)(t)=f(t)\oplus 0$.
	We shall prove the following claim:
	\vskip 10pt
	\noindent
	\underline{Claim}:
	\begin{enumerate}
		\item If $f\in C_L[G\times X; \langle A\rangle, K, E, B]$, then
		$(f(t)\oplus 0)U_t^*U_t=f(t)\oplus 0$,
		$U_t(f(t)\oplus 0)\in M_2\big(C_L[G\times X; \langle A\rangle, K, E, B]\big)$ and $U_t(f(t)\oplus 0)U_t^*\in M_2\big(C_L[G\times X; A, K]\big)$;
		\item If $f\in C_L[G\times X; A, K]$, then $U_t(f(t)\oplus 0)\in M_2\big(C_L[G\times X; A, K]\big)$.
	\end{enumerate}
    
    \vskip 10pt
    Assuming the claim, part (4) of the Lemma follows. To see this,  let $\mathcal B$ be the $C^*$-algebra of all bounded functions from $[0, \infty)$ to $B(\mathcal H_{G\times X})$. Then $U\in M_2(\mathcal B)$, where $U(t)=U_t, t\ge 0$, and $M_2\big(C^*_L(G\times X; \langle A\rangle, K, E,B)\big)\subseteq M_2(\mathcal B)$.
    By the claim above, Lemma \ref{conjugate} applies to both of
    $$\text{Ad}(U)\circ j: C^*_L(G\times X; A, K)\longrightarrow M_2\big(C^*_L(G\times X; A, K)\big)$$ 
    and 
    $$\text{Ad}(U)\circ j: C^*_L(G\times X; \langle A\rangle, K, E, B)\longrightarrow M_2\big(C^*_L(G\times X; \langle A\rangle, K, E, B)\big)$$
    where, in the first map, we view $j$ as an inclusion from $C^*_L(G\times X; A, K)$ to $M_2\big(C^*_L(G\times X; A, K)\big)$, and in the second map,  we view $j$ as an inclusion from $C^*_L(G\times X; \langle A\rangle, K, E, B)$ to $M_2\big(C^*_L(G\times X; \langle A\rangle, K, E, B)\big)$.
    Therfore, on the level of $K$-groups, we have in both cases
    $$\big(\text{Ad}(U)\circ j\big)_*=j_*$$
    Now using the sequence of maps
    $$C^*_L(G\times X; A, K)\rightarrow C^*_L(G\times X;\langle A\rangle, K, E, B)\rightarrow $$
    $$M_2\big(C^*_L(G\times X; A, K)\big)\rightarrow M_2\big(C^*_L(G\times X; \langle A\rangle, K, E, B)\big)$$
    and the fact that $j_*$ is an isomorphism, we see that the inclusion 
    $$\iota: C^*_L(G\times X; A, K)\longrightarrow C^*_L(G\times X; \langle A\rangle, K, E, B)$$ induces an isomorphism on the level of $K$-groups. 
    
    We now prove the claim.
    First, $(f(t)\oplus 0)U_t^*U_t=f(t)\oplus 0$ follows from a direct computation and the fact that $V_k, f(t)$ are supported on $G\times B$ and $V_k$ is an isometry when restricted to $\mathcal H_{G\times B}$. Using $U_t$ is uniformly continuous on eact interval $[k-1, k)$ and 
    $$\lim_{t\to k^{+}}U_t=U_k=V_k\oplus V_{k+1},\ \ \ \  \lim_{t\to k^{-}}U_t=V_k\oplus V_{k-1}$$
    we see that every matrix entry of $U_t(f(t)\oplus 0)$ and $U_t(f(t)\oplus 0)U_t^*$ is uniformly continuous in $t\in [0, \infty)$(even though $U_t$ is discontinuous at $t=1, 2, \cdots$).
    
    Fix $t\ge 0$, and let $k\in\mathbb N$ so that $k-1\le t<k$. One computes that every matrix entry of $U_t$ is of the form $A(t)=a(t)V_{k-1}+b(t)V_k$ for some $a(t), b(t)\in\mathbb R$.  Suppose $A(t)_{(g', x'),(g, x)}\ne 0$. Then by the support conditions 
    of $V_k$ and $V_{k-1}$, we have
    $$g=g',\ \ x, x'\in B$$
    If in addition $g^{-1}x\in E_k\subseteq E_{k-1}$, then
    $$g^{-1}x'\in\{\theta_{k-1}(g^{-1}x), \theta_{k}(g^{-1}x)\}\subseteq A\cap K, \ (x', x, t)\in\overline E$$ 
    where $$\overline E=\{(x', x, t)\in X\times X\times[0, \infty)\ |\ \exists s\ge t-2,\ \text{so that}\ (x', x, s)\in E\ \}$$
    It is not hard to show that $\overline E\in\mathscr E^G_X$. 
   
    Assume $f\in C_L[G\times X; \langle A\rangle, K, E, B]$. One computes that every matrix entry of $U_t(f(t)\oplus 0)$ is of the form $A(t)f(t)$. Since $f(t)$ is row and column finite and locally compact, Lemma \ref{T-properties} applies and we get $A(t)f(t)$ is locally compact and $$\big(A(t)f(t)\big)_{(g', x'),(g, x)}\ne 0$$
    implies there exists $(g'', x'')$ so that
    $$A(t)_{(g',x'), (g'', x'')}\ne 0,\ \ f(t)_{(g'', x''), (g, x)}\ne 0$$
    Hence 
    $$g'=g'',\ x'\text{\ and\ } x''\in B, \ g^{-1}x \text{\ and\ } (g'')^{-1}x''\in B\cap K$$
    and there exist a finite subset $F\subseteq X$ (depending on $t$), a finite subset $S\subseteq G$ (independent of $t$),  a countable $G$-invariant subset $\Lambda\subseteq X$ (independent of $t$), $a'', a\in A$ and $E'\in\mathscr E^G_X$ so that  
    $$g^{-1}x\text{\ and\ }(g'')^{-1}x''\in F,\ \ g^{-1}g''\in S,\ \ x\ \text{and\ } x''\in\Lambda$$
    $$\ g^{-1}a \text{\ and\ } (g'')^{-1}a''\in A\cap K,\ (a'', x'', t), (a, x, t)\in E,\ (x'', x, t)\in E' $$
    Therefore 
    $$g^{-1}g'\in S, \ g^{-1}x\text{\ and\ }(g'')^{-1}x''\in E_{k}$$
    Hence 
    $$(g')^{-1}x'\in\{\theta_{k-1}((g')^{-1}x''), \theta_k((g')^{-1}x'')\}\subseteq A\cap K, \ \ (x', x'', t)\in\overline{E}$$
    Thus
    $$(g')^{-1}x'\in\theta_{k-1}(F)\cup\theta_{k}(F),\ \ (g')^{-1}x'\in\bigcup_{i=0}^{\infty}\theta_i(\Lambda),\ \ (x', x, t)\in\overline{E}\circ E'\cup E'\circ\overline{E}\in\mathscr E^G_X$$
    Let $$\overline{F}=F\cup\theta_{k-1}(F)\cup\theta_{k}(F),\ \overline{\Lambda}=\Big(G\cdot\big(\bigcup_{i=0}^{\infty}\theta_i(\Lambda)\big)\Big)\cup\Lambda$$
    Then $\overline F\subseteq X$ is finite and $\overline{\Lambda}\subseteq X$ is countable and $G$-invariant, and the arguments above show that 
    $$\big(A(t)f(t)\big)_{(g', x'),(g, x)}\ne 0$$ implies
    $$g^{-1}x\text{\ and\ }(g')^{-1}x'\in\overline F,\ \ g^{-1}g'\in S,\ \ x'\text{\ and\ } x\in\overline{\Lambda}$$
    $$g^{-1}x\in B\cap K,\ \ (g')^{-1}x'\in A\cap K, \ \ (x', x, t)\in\overline{E}\circ E'\cup E'\circ\overline E$$
    where $S$ and $\overline{\Lambda}$ are independent of $t$, and the existence of $a, a'\in A$ (choose $a'=x'$) with the property that 
    $$g^{-1}a\text{\ and \ } (g')^{-1}a'\in A\cap K,\ \ (a, x, t), (a', x', t)\in E$$
    Therefore for $f\in C_L[G\times X; \langle A\rangle, K, E, B]$, we have 
    $$A(t)f(t)\in C_L[G\times X; \langle A\rangle, K, E, B]$$
    Hence $$U_t(f(t)\oplus 0)\in M_2\big(C_L[G\times X; \langle A\rangle, K, E, B]\big)$$
    
    The above arguments also show that if $f\in C_L[G\times X; A, K]$, then 
    $$U_t(f(t)\oplus 0)\in M_2\big(C_L[G\times X; A, K]\big)$$
    which is part (2) of the Claim.
    
    It remains to prove $U_t(f(t)\oplus 0)U_t^*\in M_2\big(C_L[G\times X; A, K]\big)$ for $f\in C_L[G\times X; \langle A\rangle, K, E, B]$. 
    One computes that every matrix entry of $U_t(f(t)\oplus 0)U_t^*$ is of the form $A(t)f(t)B(t)^*$, where $A(t), B(t)$ are matrix entries of $U_t$. We have already seen that $A(t)f(t)\in C_L[G\times X; \langle A\rangle, K, E, B]$. Therefore $$A(t)f(t)B(t)^*=\Big(B(t)\big(A(t)f(t)\big)^*\Big)^*\in C_L[G\times X; \langle A\rangle, K, E, B]$$
    
    Now suppose $$\big(A(t)f(t)B(t)^*\big)_{(g',x'),(g,x)}\ne 0$$
    Then there exist $(g'', x''), (g''',x''')$ so that 
    $$A(t)_{(g',x'),(g'',x'')}\ne 0,\ \ f(t)_{(g'',x''),(g''',x''')}\ne 0,\ \ B(t)_{(g,x),(g''',x''')}\ne 0$$
    Hence
    $$g'=g'',\ g=g''',\ (g'')^{-1}x''\ \text{and}\ (g''')^{-1}x'''\in E_k$$
    Therefore
    $$(g')^{-1}x'\in A\cap K,\ g^{-1}x\in A\cap K$$
    Hence  
    $$A(t)f(t)B(t)^*\in C_L[G\times X; A, K]$$
    This shows $U_t(f(t)\oplus 0)U_t^*\in M_2\big(C_L[G\times X; A, K]\big)$ and completes the proof of the claim.
\end{proof}

\subsection{Coarsely excisive triad}

    Next we define the notion of \textit{coarsely excisive triad}. Similar notions have been introduced and studied in different contexts such as \cite{HRY}\cite{HPR}\cite{BFJR}. We define it in our context. 
    

\begin{defn} Let $X_1$ and $X_2$ be two $G$-invariant subspaces of the 
	$G$-space $X$. The triad $(X, X_1, X_2)$ is called \textit{coarsely excisive} if $X=X_1\cup X_2, X_1\cap X_2\ne\emptyset$, and for any $E\in\mathscr E^G_X$, compact $K\subseteq X$, there exist $E'\in\mathscr E^G_X$ and compact $K'\subseteq X$ with the following property: 
	for any $g\in G, x\in X, t\ge 0$ with $g^{-1}x\in K$, if there exist $x_1\in X_1, x_2\in X_2$ with $(x_1, x, t), (x_2, x, t)\in E, g^{-1}x_1, g^{-1}x_2\in K$, then we can find $x_0\in X_1\cap X_2$ so that $(x_0, x, t)\in E'$ and $g^{-1}x_0\in K'$.
\end{defn}

    Intuitively, if $(X, X_1, X_2)$ is coarsely excisive, then a point $x\in X$ that is close to both of $X_1$ and $X_2$ at time $t$ must also close to $X_1\cap X_2$ at that time. We have the following important lemma:

\begin{lem}\label{excisivetriad}
	Let $X_1$ and $X_2$ be two $G$-invariant subspaces of the $G$-space $X$.  
	\begin{enumerate}
	\item If $X=X_1\cup X_2$, then 
	$$C^*_L(G\times X)=C_L^*(G\times X; \langle X_1\rangle)+C^*_L(G\times X; \langle X_2\rangle).$$ 
	\item If $(X, X_1, X_2)$ is coarsely excisive, then
	$$C^*_L(G\times X; \langle X_1\cap X_2\rangle)=C^*_L(G\times X; \langle X_1\rangle)\cap C^*_L(G\times X; \langle X_2\rangle).$$	
	Therefore we have an isomorphism of $C^*$-algebras induced by inclusions
	$$C^*_L(G\times X; \langle X_1\rangle)/C^*_L(G\times X; \langle X_1\cap X_2\rangle)\tilde{\longrightarrow}C^*_L(G\times X)/C^*_L(G\times X; \langle X_2\rangle).$$
	\item If $X$ is the disjoint union of $X_1$ and $X_2$ with $X_1, X_2$ open, then the K-groups of $C^*_L(G\times X; \langle X_1\rangle)\cap C^*_L(G\times X; \langle X_2\rangle)$ vanish. 
    \end{enumerate}
\end{lem}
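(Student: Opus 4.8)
The plan is to prove all three parts by reducing every genuine computation to the dense $*$-subalgebras and then controlling the passage to closures through the standard $C^*$-algebraic identity $I\cap J=\overline{IJ}$ for closed two-sided ideals. Throughout I write $I_j:=C^*_L(G\times X;\langle X_j\rangle)$, which are ideals by Lemma \ref{ideal}(2). For (1), I would first decompose at the dense level. Since $X=X_1\cup X_2$, the $G$-invariant sets $Y_1:=X_1$ and $Y_2:=X_2\setminus X_1$ partition $X$ with $Y_i\subseteq X_i$. Given $f\in C_L[G\times X]$, define $f_i$ by keeping the entries $f(t)_{(g',x'),(g,x)}$ with $x\in Y_i$ and zeroing out the rest, so $f=f_1+f_2$. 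Because $\Delta\subseteq E$, a point of $Y_i\subseteq X_i$ is its own $X_i$-witness for the column index; the finite-propagation bound $g^{-1}g'\in S$ transports the compact-support condition from the column to the row index, while the symmetry of the control set $E$ of $f$ certifies the row index as $X_i$-witnessed by the same column point. Hence $f_i\in C_L[G\times X;\langle X_i\rangle]$, so $C_L[G\times X]=C_L[G\times X;\langle X_1\rangle]+C_L[G\times X;\langle X_2\rangle]$; since the sum of two closed ideals of a $C^*$-algebra is again a closed ideal, taking closures gives $C^*_L(G\times X)=I_1+I_2$.

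For (2), the inclusion $C^*_L(G\times X;\langle X_1\cap X_2\rangle)\subseteq I_1\cap I_2$ is immediate from monotonicity of $A\mapsto\langle A\rangle$. For the reverse inclusion the essential difficulty is that an intersection of closures need not be the closure of an intersection; I would sidestep this using $I_1\cap I_2=\overline{I_1 I_2}$. It then suffices to show that a product $fg$ with $f\in C_L[G\times X;\langle X_1\rangle]$ and $g\in C_L[G\times X;\langle X_2\rangle]$ lies in $C_L[G\times X;\langle X_1\cap X_2\rangle]$. Any nonzero entry of $fg$ factors through an intermediate index $(g'',x'')$ that is an $X_1$-witnessed column of $f$ and an $X_2$-witnessed row of $g$; translating by $(g'')^{-1}$ and applying the definition of coarsely excisive triad to $(g'')^{-1}x''$ produces $x_0\in X_1\cap X_2$ controlled-close to $x''$. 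Composing the excision control with the intrinsic controls of $f$ and $g$, enlarging the compact sets by their finite propagations, and using Lemma \ref{ecc_properties} to stay inside $\mathscr E^G_X$, shows that both the row and column indices of $fg$ are $(X_1\cap X_2)$-witnessed. Thus $\overline{I_1 I_2}\subseteq C^*_L(G\times X;\langle X_1\cap X_2\rangle)$, giving equality. The displayed isomorphism is then the second isomorphism theorem applied to (1): $I_1/(I_1\cap I_2)\cong (I_1+I_2)/I_2=C^*_L(G\times X)/I_2$.

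For (3), with $X=X_1\sqcup X_2$ and both pieces open, hence clopen and $G$-invariant, I again use $I_1\cap I_2=\overline{I_1 I_2}$. The geometric heart is that no index can be controlled-close to both clopen pieces once $t$ is large: for a dense product $fg$, a nonzero entry at parameter $t$ forces an intermediate $x''$ whose $(g'')^{-1}$-translate is controlled-close to a point of $X_1$ and to a point of $X_2$ lying in a fixed compact set. A finite subcover of that compact set, feeding the clopen neighborhoods $U=X_i$ into condition (1) of Definition \ref{ecc}, provides a uniform $N$ so that this is impossible for $t>N$. Hence every dense product $fg$ vanishes for $t\ge N$, and approximating the factors of an arbitrary element of $I_1 I_2$ by dense elements shows $I_1\cap I_2=\overline{\bigcup_N \mathcal D_N}=\text{colim}_N\,\mathcal D_N$, where $\mathcal D_N=\{F\in I_1\cap I_2:\ F(t)=0\ \text{for}\ t\ge N\}$. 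Each $\mathcal D_N$ is $K$-trivial via the endomorphisms $H_r(F)(t):=F(t+rN)$, $r\in[0,1]$, which preserve $\mathcal D_N$, are norm-continuous in $r$ by uniform continuity, and interpolate between $H_0=\mathrm{id}$ and $H_1=0$; thus $\mathrm{id}_*=0$ on $K_*(\mathcal D_N)$. Since $K$-theory commutes with colimits, $K_*(I_1\cap I_2)=0$.

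The steps I expect to be most delicate are the reverse inclusion in (2) and the emptiness-at-infinity argument in (3). Both hinge on the single device of replacing the troublesome intersection of closures by $\overline{I_1 I_2}$, after which all the real content—the coarse-excision bookkeeping in (2) and the compactness-plus-continuous-control separation of the clopen pieces in (3)—takes place entirely inside the dense subalgebras, where support conditions can be tracked explicitly.
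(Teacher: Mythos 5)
Your proof is correct, and while the geometric content (the witness bookkeeping in part (1), the use of coarse excisiveness in part (2), the clopen-separation-at-infinity and shift homotopy in part (3)) coincides with the paper's, the algebraic scaffolding is genuinely different. The paper works throughout with the \emph{intersection} of the dense subalgebras: it proves $C_L[G\times X;\langle X_1\rangle]\cap C_L[G\times X;\langle X_2\rangle]\subseteq C_L[G\times X;\langle X_1\cap X_2\rangle]$ and then asserts parenthetically that the closure of this intersection equals $C^*_L(G\times X;\langle X_1\rangle)\cap C^*_L(G\times X;\langle X_2\rangle)$ --- a statement that is false for general dense subalgebras and really does need the ideal structure. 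You instead route everything through the identity $I_1\cap I_2=\overline{I_1I_2}$ for closed two-sided ideals and verify the control conditions on \emph{products} $fg$ of dense elements; this makes the delicate closure step rigorous by design and is, in effect, the cleanest justification of the paper's parenthetical claim. The price is one extra layer of bookkeeping: every nonzero entry of $fg$ must be traced through an intermediate index, so the coarse-excisiveness hypothesis is applied at the intermediate point $(g'',x'')$ and the resulting control sets must be composed (via Lemma \ref{ecc_properties}) with the intrinsic controls of $f$ and $g$ and the compacta enlarged by the propagation sets, whereas the paper applies excisiveness directly to the row and column indices of a single element. In part (1) your decomposition by the column-index partition $Y_1=X_1$, $Y_2=X_2\setminus X_1$ differs cosmetically from the paper's (which restricts both indices to $X_1$ for $f_1$), but the witness argument --- take $a=a'=x$, use $\Delta\subseteq E$ and symmetry of $E$, and enlarge the compact set to $K\cup S^{-1}K$ to accommodate $(g')^{-1}x$ --- is the same; and your appeal to the closedness of a sum of two closed ideals replaces the paper's citation of Higson--Roe--Yu. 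Both routes are sound; yours is self-contained on the one point the paper leaves implicit.
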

    

\begin{proof}
	(1) For any $f\in C_L[G\times X]$, define $f_1$ by $f_1(t)_{(g',x'), (g, x)}=f(t)_{(g',x'), (g, x)}$ if $x, x'\in X_1$, and 
	$f_1(t)_{(g',x'), (g, x)}=0$ otherwise. Then $f_1\in C_L[G\times X; X_1]\subseteq C_L[G\times X; \langle X_1\rangle]$. Let $f_2=f-f_1$. We
	show $f_2\in C_L[G\times X; \langle X_2\rangle]$. Suppose $f_2(t)_{(g',x'), (g,x)}\ne 0$. Then $f_2(t)_{(g',x'), (g,x)}=f(t)_{(g',x'), (g,x)}\ne 0$ and at least one of $x, x'\in X_2$.
	By definition, there exist $E\in\mathscr E^G_X$, finite $S\subseteq G$ and compact $K\subseteq X$ so that $(x',x, t)\in E, g^{-1}g'\in S$ and $g^{-1}x, (g')^{-1}x'\in K$. By enlarging $S$, we may assume $S=S^{-1}$. If $x\in X_2$, we let $a'=a=x\in X_2$. Then $(a', x', t), (a, x, t)\in E$, $(g')^{-1}a'=(g')^{-1}x=(g')^{-1}gg^{-1}x\in S^{-1}K=SK, g^{-1}a=g^{-1}x\in K$. Let $K'=SK\cup K$. Then $g^{-1}x, g^{-1}a, (g')^{-1}x', (g')^{-1}a'\in K'$. Similar argument also applies to the case when $x'\in X_2$. Altogether, we have shown that $f_2\in C_L[G\times X; \langle X_2\rangle]$. Therefore 
	$$C_L[G\times X]=C_L[G\times X; \langle X_1\rangle]+C_L[G\times X; \langle X_2\rangle]$$
	Hence, by \cite[Section 3, Lemma 1]{HRY}, we have  
	$$C^*_L(G\times X)=C_L^*(G\times X; \langle X_1\rangle)+C^*_L(G\times X; \langle X_2\rangle)$$
	
	\vskip 10pt
	\noindent
	(2) Clearly we have $C^*_L(G\times X; \langle X_1\cap X_2\rangle)\subseteq C^*_L(G\times X; \langle X_1\rangle)\cap C^*_L(G\times X; \langle X_2\rangle)$. For the converse, it suffices to show $C_L[G\times X; \langle X_1\rangle]\cap C_L[G\times X; \langle X_2\rangle]\subseteq C_L[G\times X; \langle X_1\cap X_2\rangle]$ (it is not hard to see the closure of $C_L[G\times X; \langle X_1\rangle]\cap C_L[G\times X; \langle X_2\rangle]$ is equal to $C^*_L(G\times X; \langle X_1\rangle)\cap C^*_L(G\times X; \langle X_2\rangle)$). Let $f\in C_L[G\times X; \langle X_1\rangle]\cap C_L[G\times X; \langle X_2\rangle]$. By definition, for $i=1, 2$, there exist $E_i\in\mathscr E^G_X$, compact $K_i\subseteq X$ so that $f(t)_{(g', x'),(g, x)}\ne 0$ implies $g^{-1}x, (g')^{-1}x'\in K_i$, and the existence of $x_i, x_i'\in X_i$ with  $g^{-1}x_i, (g')^{-1}x_i'\in K_i$ and $(x_i, x, t), (x_i', x', t)\in E_i$. Let $E=E_1\cup E_2$, $K=K_1\cup K_2$. Then, since $(X, X_1, X_2)$ is coarsely excisive,  there exist $E'\in\mathscr E^G_X$ and compact $K'\subseteq X$ so that $f(t)_{(g', x'), (g, x)}\ne 0$ implies the existence of $x_0, x_0'\in X_1\cap X_2$ with the property $(x_0, x, t), (x_0', x', t)\in E'$ and $g^{-1}x_0, (g')^{-1}x_0'\in K'$. Hence $f\in C_L[G\times X; \langle X_1\cap X_2\rangle]$. This completes the proof of part (2).
	
	
	\vskip 10pt
	\noindent
	(3) We first show for any $f\in C_L[G\times X; \langle X_1\rangle]\cap C_L[G\times X; \langle X_2\rangle]$, there exists $t_f\ge 0$ so that $f(t)=0$ if $t\ge t_f$.
	For any $E\in\mathscr E^G_X$ and compact $K\subseteq X$, we let
	$$E_K=\{(x', x, t)\in E\ |\ x, x'\in G\cdot K\}.$$
	
	By assumption, $X_1$ and $X_2$ are $G$-invariant open subsets of $X$. Therefore, by definition of $E$, for any $a\in K\cap X_i, i=1, 2$, there exist a $G_{a}$-invariant open neighborhood $V_a$ of $a$ in $X_i$ and $t_{a}>0$ so that if $(x', x, t)\in E_K$ and $t>t_a$, then one of $x, x'$ lies in $V_a$ implies the other lies in $X_i$. Since $K$ is compact, we can find finitely many $a_1, a_2, \cdots, a_n\in K$ so that $K\subseteq\bigcup_{j=1}^{n}V_{a_j}$. Let $t_{E, K}=t_{a_1}+\cdots+t_{a_n}$. Assume $(x', x, t)\in E_K, t>t_{E, K}$ and $x\in X_i$. There exists $g\in G$ so that $gx\in K$. Hence $gx\in V_{a_j}\subseteq X_i$ for some $1\le j\le n$. Therefore $gx'\in X_i$. So $x'\in X_i$. This shows that there exists $t_{E, K}>0$, so that for any $(x', x, t)\in E_K$, if $t>t_{E, K}$, then either $x,x'\in X_1$ or $x, x'\in X_2$.
	
	Now let $f\in C_L[G\times X; \langle X_1\rangle]\cap C_L[G\times X; \langle X_2\rangle]$.  By definition, there exist $E\in\mathscr E^G_X$ and compact $K\subseteq X$ so that $f(t)_{(g', x'),(g, x)}\ne 0$ implies $g^{-1}x, (g')^{-1}x'\in K$ and the existence of $a, a'\in X_1$ with $(a, x, t), (a', x', t)\in E$ and $g^{-1}a, (g')^{-1}a'\in K$. This implies $(a, x, t), (a', x', t')\in E_K$. Therefore if $f(t)_{(g', x'), (g, x)}\ne 0$ and $t>t_{E, K}$, then $x, x'\in X_1$ since $a, a'\in X_1$. The same argument also applies to $X_2$. Since $X_1\cap X_2=\emptyset$, we see that there exists $t_f\ge 0$ so that $f(t)=0$ if $t\ge t_f$.
	
	For any $\alpha\ge 0$, let $C_L[G\times X; \langle X_1\rangle\cap\langle X_2\rangle; \alpha]$ be the $*$-subalgebra of $C_L[G\times X; \langle X_1\rangle]\cap C_L[G\times X; \langle X_2\rangle]$ that consists of all $f$ with the property that $f(t)=0$ if $t\ge\alpha$.  By the previous argument, we  have
	$$C_L[G\times X; \langle X_1\rangle]\cap C_L[G\times X; \langle X_2\rangle]=\bigcup_{\alpha\ge 0}C_L[G\times X; \langle X_1\rangle\cap\langle X_2\rangle; \alpha]$$
	Therefore
	$$C_L^*(G\times X; \langle X_1\rangle)\cap C_L^*(G\times X; \langle X_2\rangle)=\text{colim}_{\alpha\ge 0}C^*_L(G\times X; \langle X_1\rangle\cap\langle X_2\rangle; \alpha)$$
	where $C^*_L(G\times X; \langle X_1\rangle\cap\langle X_2\rangle; \alpha)$ is the closure of $C_L[G\times X; \langle X_1\rangle\cap\langle X_2\rangle; \alpha]$ in $C^*_L(G\times X)$, and the colimit is taking under the inclusion $C^*_L(G\times X; \langle X_1\rangle\cap\langle X_2\rangle; \alpha)\subseteq C^*_L(G\times X; \langle X_1\rangle\cap\langle X_2\rangle; \beta)$ for any $0\le\alpha\le\beta$. We thus have
	$$K_*\big(C_L^*(G\times X; \langle X_1\rangle)\cap C_L^*(G\times X; \langle X_2\rangle)\big)=\text{colim}_{\alpha\ge 0}K_*\big(C^*_L(G\times X; \langle X_1\rangle\cap\langle X_2\rangle; \alpha)\big).$$
	Fix $\alpha\ge 0$.
	For each $s\in[0, 1]$, we define 
	$$H_s: C^*_L(G\times X; \langle X_1\rangle\cap\langle X_2\rangle;\alpha)\longrightarrow C^*_L(G\times X; \langle X_1\rangle\cap\langle X_2\rangle;\alpha)$$
	$$H_s(f)(t)=f(t+s\alpha).$$
	$H_s(f)$ is continuous in $s$ since $f$ is uniformly continuous in $t$. Therefore $H_s$ is a homotopy between $H_0=id$ and $H_1=0$. Hence $K_*\big(C^*_L(G\times X; \langle X_1\rangle\cap\langle X_2\rangle; \alpha)\big)=0$. It follows that $K_*\big(C_L^*(G\times X; \langle X_1\rangle)\cap C_L^*(G\times X; \langle X_2\rangle)\big)=0$. This completes the proof of part (3).
\end{proof}

\begin{cor}\label{excision}
	Let $X_1, X_2$ be two $G$-invariant closed subspaces of $X$. If $(X, X_1, X_2)$ is coarsely excisive or if $X$ is a disjoint union of $X_1$ and $X_2$, then the inclusions of $C^*$-algebras induce an isomorphism 
	$$j_*: LK^G_*(X_1, X_1\cap X_2)\longrightarrow LK^G_*(X, X_2)$$
\end{cor}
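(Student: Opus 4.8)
The plan is to bridge the two sides through the intermediate ideal $C^*_L(G\times X;\langle X_1\rangle)$, which mediates between the \emph{intrinsic} algebra $C^*_L(G\times X_1)$ of the subspace and the \emph{ambient} quotient $C^*_L(G\times X)/C^*_L(G\times X;\langle X_2\rangle)$ computing $LK^G_*(X, X_2)$. Write $A=C^*_L(G\times X)$, $I_1=C^*_L(G\times X;\langle X_1\rangle)$, $I_2=C^*_L(G\times X;\langle X_2\rangle)$ and $I_{12}=C^*_L(G\times X;\langle X_1\cap X_2\rangle)$; these are ideals of $A$ by Lemma \ref{ideal}(2), and $I_{12}\subseteq I_1$. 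The inclusion $j$ of pairs induces on relative algebras the map $C^*_L(G\times X_1)/C^*_L(G\times X_1;\langle X_1\cap X_2\rangle)\to A/I_2$, which I will factor as $C^*_L(G\times X_1)\to I_1\to A/I_2$ and analyze one factor at a time.

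First suppose $(X, X_1, X_2)$ is coarsely excisive. By Lemma \ref{excisivetriad}(2) we have $I_{12}=I_1\cap I_2$ and the inclusion induces an isomorphism $I_1/I_{12}\xrightarrow{\sim}A/I_2$, so $K_*(I_1/I_{12})=LK^G_*(X, X_2)$. It then remains to identify $K_*(I_1/I_{12})$ with $LK^G_*(X_1, X_1\cap X_2)=K_*(Q_1)$, where $Q_1=C^*_L(G\times X_1)/C^*_L(G\times X_1;\langle X_1\cap X_2\rangle)$. For this I would set up the map of short exact sequences induced by inclusions
$$
\xymatrix{
0\ar[r] & C^*_L(G\times X_1;\langle X_1\cap X_2\rangle)\ar[r]\ar[d]^{\alpha} & C^*_L(G\times X_1)\ar[r]\ar[d]^{\beta} & Q_1\ar[r]\ar[d]^{\gamma} & 0\\
0\ar[r] & I_{12}\ar[r] & I_1\ar[r] & I_1/I_{12}\ar[r] & 0
}
$$
The middle map $\beta$ is a $K$-equivalence by Lemma \ref{ideal}(3),(4) applied to the closed set $X_1$. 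Once $\alpha$ is also known to be a $K$-equivalence, the five lemma applied to the two six-term $K$-theory sequences forces $\gamma$ to be one, and since $\gamma$ followed by $I_1/I_{12}\xrightarrow{\sim}A/I_2$ is precisely $j_*$, we are done in this case.

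The map $\alpha$ is the fiddly point. As $X_1$ and $X_2$ are closed, so is $X_1\cap X_2$, and I would use that it is closed both in $X_1$ and in $X$. Lemma \ref{ideal}(4) applied inside $X_1$ gives a $K$-equivalence $C^*_L(G\times(X_1\cap X_2))\to C^*_L(G\times X_1;\langle X_1\cap X_2\rangle)$, and applied inside $X$ gives a $K$-equivalence $C^*_L(G\times(X_1\cap X_2))\to I_{12}$. Using the compatibility $\mathscr E^G_{X_1\cap X_2}=\iota^{-1}\mathscr E^G_{X_1}=\iota^{-1}\mathscr E^G_X$ established in the proof of Lemma \ref{ideal}(3), both $K$-equivalences are the canonical inclusions of operators supported on $X_1\cap X_2$, so the resulting triangle commutes and $\alpha$ is a $K$-equivalence by two-out-of-three.

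For the disjoint case $X=X_1\sqcup X_2$ with $X_1, X_2$ closed (hence open), one has $X_1\cap X_2=\emptyset$, so $I_{12}=0$ and $LK^G_*(X_1, X_1\cap X_2)=LK^G_*(X_1)=K_*(C^*_L(G\times X_1))$. Here Lemma \ref{excisivetriad}(1) gives $A=I_1+I_2$, whence $A/I_2\cong I_1/(I_1\cap I_2)$, while Lemma \ref{excisivetriad}(3) gives $K_*(I_1\cap I_2)=0$; thus the quotient map $I_1\to A/I_2$ is a $K$-equivalence. Composing with the $K$-equivalence $C^*_L(G\times X_1)\to I_1$ of Lemma \ref{ideal}(3),(4), and observing that the composite is again the map induced by $j$, yields the asserted isomorphism $j_*\colon LK^G_*(X_1)\xrightarrow{\sim}LK^G_*(X, X_2)$. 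The main obstacle throughout is the bookkeeping that makes $\alpha$ a $K$-equivalence together with the verification that the chain of identifications assembles into the single map $j_*$; both reduce to checking that the various inclusions of controlled algebras are mutually compatible, which the support and control conditions of Definition \ref{local_alg_subspace} guarantee.
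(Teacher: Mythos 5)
Your proposal is correct and follows essentially the same route as the paper: both arguments identify $LK^G_*(X_1,X_1\cap X_2)$ with $K_*\bigl(C^*_L(G\times X;\langle X_1\rangle)/C^*_L(G\times X;\langle X_1\cap X_2\rangle)\bigr)$ via Lemma \ref{ideal} and the six-term sequence, and then invoke Lemma \ref{excisivetriad}(2) in the coarsely excisive case and Lemma \ref{excisivetriad}(1),(3) in the disjoint case. The only difference is that you spell out the five-lemma step and the commuting triangle through $C^*_L(G\times(X_1\cap X_2))$, which the paper compresses into a parenthetical remark.
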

\begin{proof} By Lemma \ref{ideal}, the inclusions of $C^*$-algebras
	induce isomorphisms on $K$-groups
	$$K_*\big(C^*_L(G\times X_1)\big)\cong K_*\big(C^*_L(G\times X; \langle X_1\rangle)\big)$$
	$$K_*\big(C^*_L(G\times X_1;\langle X_1\cap X_2\rangle)\big)\cong K_*\big(C^*_L(G\times X; \langle X_1\cap X_2\rangle)\big)$$
	(the second isomorphism holds because they are both isomorphic to $K_*\big(C^*_L(G\times (X_1\cap X_2))\big)$). Therefore by the exact sequence of $K$-groups associated to an ideal in a $C^*$-algebra, we get
	$$LK^G_*(X_1, X_1\cap X_2)\cong K_*\big(C^*_L(G\times X; \langle X_1\rangle)/C^*_L(G\times X;\langle X_1\cap X_2\rangle)\big).$$
	
	If $(X, X_1, X_2)$ is coarsely excisive, then by Lemma \ref{excisivetriad}, we get that $j_*: LK^G_*(X_1, X_1\cap X_2)\longrightarrow LK^G_*(X, X_2)$ is an isomorphism. 
	
	If $X$ is the disjoint union of $X_1$ and $X_2$, then Lemma \ref{excisivetriad} implies that $K_*\big(C^*_L(G\times X; \langle X_1\rangle)\big)\cong LK^G_*(X, X_2)$.  So we also have $j_*: LK^G_*(X_1, X_1\cap X_2)\longrightarrow LK^G_*(X, X_2)$ is an isomorphism. 
\end{proof}

    The next lemma, which is an analogue of \cite[Proposition 5.3]{BFJR}, gives a condition on the triad $(X, X_1, X_2)$ so that it is coarsely excisive. The main idea of its proof is similar to that of \cite[Proposition 5.3]{BFJR}. We carry out the detail here since  some detail is missing there and also for convenience.

\begin{lem}\label{coarseexcisive}
	Let $X_1$ and $X_2$ be two closed $G$-invariant subspaces of a  $G$-space $X$ so that $X=X_1\cup X_2$. Assume there exists a $G$-invariant open neighborhood $U$ of $X_1\cap X_2$ in $X$ so that its closure $\bar U$ in $X$ is $G$-equivariantly homeomorphic to $(X_1\cap X_2)\times[-1, 1]$, where $G$ acts trivially on $[-1, 1]$. Moreover, $U$ corresponds to $(X_1\cap X_2)\times(-1, 1)$, $U\cap X_1$ corresponds to $(X_1\cap X_2)\times [0,1)$ and $U\cap X_2$ corresponds to $(X_1\cap X_2)\times (-1, 0]$. Then the triad $(X, X_1, X_2)$ is coarsely excisive. 
\end{lem}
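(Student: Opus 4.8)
The plan is to exploit the collar structure to build a ``vertical retraction'' of $\bar U$ onto $X_1\cap X_2$ and to show, using the continuous control at infinity of $E$, that any point that is $E$-close to \emph{both} $X_1$ and $X_2$ is, for large $t$, forced into the collar and pressed to collar-height $0$. First I would use $G$-invariance to reduce to the compact set $K$: given a configuration $(g,x,t,x_1,x_2)$ as in the definition, translate by $g^{-1}$ to $y=g^{-1}x$, $y_i=g^{-1}x_i$, so that $y\in K$, $y_1\in X_1\cap K$, $y_2\in X_2\cap K$ and $(y_1,y,t),(y_2,y,t)\in E$; a point $x_0$ found for $y$ then yields $gx_0$ for the original data by the $G$-invariance of the set $E'$ I will construct. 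I would then record two topological facts. (i) Since $X_1\cap X_2\subseteq U\subseteq\bar U$ and $X_2$ is closed, every point of $X_1\setminus\bar U$ has a neighborhood disjoint from $X_2$ (and symmetrically), so $X_1\setminus\bar U$ and $X_2$ are topologically separated. (ii) Writing the collar homeomorphism $\bar U\cong(X_1\cap X_2)\times[-1,1]$ with height function $h$, one has $X_1\cap\bar U=\{h\ge 0\}$, $X_2\cap\bar U=\{h\le 0\}$ and $X_1\cap X_2=\{h=0\}$, and the projection $r\colon\bar U\to X_1\cap X_2$, $r(p,s)=p$, is a continuous $G$-map (as $G$ acts trivially on $[-1,1]$).

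With this in hand I would set $E'=E'_0\cup E'_1\cup\Delta$ and $K'=K\cup\{q_0\}\cup r(\bar U\cap K)$, where $q_0\in X_1\cap X_2$ is any fixed point, $K'$ is compact since $\bar U\cap K$ is compact and $r$ is continuous,
$$E'_1=\bigl\{\,(r(z),z,t),\,(z,r(z),t)\ \bigm|\ z\in\bar U\cap G\cdot K,\ \exists\,z_1\in X_1\cap G\cdot K,\ z_2\in X_2\cap G\cdot K:\ (z_1,z,t),(z_2,z,t)\in E\,\bigr\},$$
and $E'_0=\{(w',w,t)\mid w,w'\in G\cdot K',\ t\le N\}$ for a threshold $N=N(E,K)$ produced below. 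Both pieces are visibly symmetric and $G$-invariant (using that $r$ is $G$-equivariant and $\bar U$ is $G$-invariant), so $E'\in\mathscr E^G_X$ by Lemma \ref{ecc_properties}(1) once each piece is shown to be. To verify the defining property for a reduced configuration $(y,y_1,y_2,t)$ I split on whether $y\in\bar U$: if $y\in\bar U$ then $(r(y),y,t)\in E'_1$ (take $z=y$, $z_i=y_i$), and $x_0:=r(y)\in X_1\cap X_2$ lies in $r(\bar U\cap K)\subseteq K'$; if $y\notin\bar U$ I will show $t\le N$, so that $(q_0,y,t)\in E'_0$ and $x_0:=q_0$ works.

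Two statements remain, and both carry the real content; each is a compactness/contradiction argument in the spirit of Lemma \ref{ecc_properties}(3), using the slice theorem \cite[Proposition 3.4]{BFJR}. The first is the uniform threshold: there is $N$ so that $y\in K$, $y_i\in X_i\cap K$, $(y_i,y,t)\in E$ and $t>N$ force $y\in\bar U$. The second is $E'_1\in\mathscr E^G_X$ (the piece $E'_0$ is trivially continuously controlled at infinity, since it contains no triples with $t>N$). For both I would argue by contradiction: a failure produces sequences with $t_k\to\infty$; after translating by suitable $g_k$ to localize the middle point inside the compact set $\bar U\cap K$ (resp. $K$) and passing to a subsequence, the continuous control of $E$ drives the $X_1$- and $X_2$-witnesses to a common limit $\zeta$, whence $\zeta\in X_1\cap X_2$ because $X_1,X_2$ are closed. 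This is the crux: $\zeta$ has height $0$, so $r(\zeta)=\zeta$, and since $r$ is continuous and $G$-equivariant the translated points $z_k$ and $r(z_k)$ both converge to $\zeta$; combined with the slice-theorem property $\bigcap_k\overline{G\cdot V^k}=G\cdot w$ this contradicts the assumption that one of $z_k,r(z_k)$ stays in the shrinking neighborhoods $V^k$ of $w$ while the other escapes the fixed neighborhood $W$ (and, for the first statement, that $\zeta\notin\bar U$, impossible as $X_1\cap X_2\subseteq U$ is open). I expect the main obstacle to be exactly the bookkeeping of the $G$-translates in this contradiction — matching the localizing elements $g_k$ with the slice-theorem element $g$ so that $z_k$ and $r(z_k)$ are simultaneously forced into $W$ — which is precisely the detail the authors note is missing from \cite[Proposition 5.3]{BFJR}.
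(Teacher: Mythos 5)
Your proposal is essentially correct and rests on the same core mechanism as the paper's proof: points that are $E$-close to both $X_1$ and $X_2$ at large times are forced into arbitrarily thin collars around $X_1\cap X_2$, and the witness $x_0$ is obtained by projecting to the zero section. The packaging differs in two ways. First, the paper's $E'$ consists of \emph{all} vertical segments $L_{\epsilon(t)}(z)$ of an explicitly shrinking height $\epsilon(t)\to 0$, and membership in $\mathscr E^G_X$ is verified directly from the product structure of the collar (choosing $V=Q_x\times(-\epsilon_0,\epsilon_0)$ for $x\in X_1\cap X_2$ and a neighborhood missing $U_{\epsilon_0}$ otherwise), with no appeal to the slice theorem; your $E'_1$ is the graph of the collar retraction $r$ over the doubly-close points, and you verify control by the contradiction/slice-theorem scheme of Lemma \ref{ecc_properties}(3). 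Both work; yours reuses machinery already in the paper but, like that machinery, is tied to $G$-CW complexes (the paper's direct verification of $E'\in\mathscr E^G_X$ is not, although its threshold argument also extracts convergent subsequences from a compact set, so in practice both proofs live in the $G$-CW setting where the lemma is applied). Note also that your single threshold ``$t>N$ forces $y\in\bar U$'' is not enough on its own: the continuous control of $E'_1$ at a point of nonzero collar height, or at a point of $X_1\cap X_2$ with a thin product neighborhood, needs the full $\epsilon$-dependent statement (for every $\epsilon$ there is $t_\epsilon$ forcing height $<\epsilon$). Your contradiction argument for $E'_1\in\mathscr E^G_X$ does in effect supply this, since the limit $\zeta$ lands on the zero section and $r$ is continuous, so this is a matter of presentation rather than substance.

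The one step you should repair is the phrase ``the continuous control of $E$ drives the $X_1$- and $X_2$-witnesses to a common limit $\zeta$, whence $\zeta\in X_1\cap X_2$ because $X_1,X_2$ are closed.'' The witnesses need not converge at all: in your $E'_1$ they are only required to lie in $G\cdot K$, not in the single translate of $K$ containing the middle point, so no subsequence extraction is available for them; and even when they do lie in a fixed compact set, condition (1) of Definition \ref{ecc} only forces them eventually into every $G_\zeta$-\emph{invariant} neighborhood of $\zeta$, which is weaker than convergence. The correct (and sufficient) argument runs the other way: let $\zeta$ be the limit of the translated middle points $g_kz_k$; if $\zeta\notin X_i$, then $X\setminus X_i$ is a $G$-invariant, hence $G_\zeta$-invariant, open neighborhood of $\zeta$, so continuous control of $E$ forces the witness $g_ky^k_i$ into $X\setminus X_i$ for large $k$, contradicting $g_ky^k_i\in X_i$. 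Thus $\zeta\in X_1\cap X_2$, which is all you use ($\zeta$ has height $0$, $r(\zeta)=\zeta$, and by continuity of $r$ both $g_kz_k$ and $g_kr(z_k)$ converge to $\zeta$). With that substitution the slice-theorem bookkeeping you describe goes through exactly as in the proof of Lemma \ref{ecc_properties}(3), and the proof is complete.
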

\begin{proof}
	Let $E\in\mathscr E^G_X$ and $K\subseteq X$ be compact. For $i=1, 2$, define 
	$$\big(X_i\times[0, \infty)\big)^{E, K}=\{(x, t)\in X\times[0, \infty)\ |\ \exists g\in G, x_i\in X_i\text{\ so\ that}\ g^{-1}x, g^{-1}x_i\in K, (x, x_i, t)\in E\}$$
    Let 
    $$Z_{i}=\big(X_i\times [0,\infty)\big)^{E, K}-X_i\times [0, \infty), i=1, 2$$ 
    For $0\le\epsilon\le 1$, let $U_\epsilon$ be the part of $X$  corresponding to $(X_1\cap X_2)\times [-\epsilon, \epsilon]$. Note that for each $\epsilon>0$, there exists $t_\epsilon>0$, so that for any $(x, t)\in Z_1\cup Z_2$, if $t\ge t_\epsilon$, then $x\in U_\epsilon$. Otherwise, we can find an $\epsilon_0>0$, so that there exist a sequence $(x_n, t_n)\in Z_1\cup Z_2$ with $t_n\rightarrow\infty$ and $x_n\notin U_{\epsilon_0}$. Without loss of generality, we may assume $x_n\in X_1,\forall n\in\mathbb N$. Then $(x_n, t_n)\in Z_2$. Therefore, there exist sequences $y_n$ in $X_2$ and $g_n\in G$ so that $(x_n, y_n, t_n)\in E$, and $g_n^{-1}x_n, g_n^{-1}y_n\in K$. By passing to a subsequence, we may assume $g_n^{-1}y_n\rightarrow\bar{y}$. Note that $\bar{y}\in X_2$ since $X_2$ is closed. Now let $V=X_2\cup (X_1\cap X_2)\times(-\epsilon_0, \epsilon_0)$. Then $V$ is a $G$-invariant open neighborhood of $\bar{y}$ in $X$. Therefore, by definition of $E$, there exist a $G_{\bar y}$-invariant open neighborhood $W\subseteq V$ of $\bar y$ in $X$ and $N>0$, so that $(a,b, t)\in E, t>N, b\in W$ implies $a\in V$. Now since $g_n^{-1}y_n\rightarrow \bar y, t_n\rightarrow\infty$, and $(g_n^{-1}x_n, g_n^{-1}y_n, t_n)\in E$, we see that $g_n^{-1}x_n$ is eventually in $V$. So $x_n$ is eventually in $V\cap X_1\subseteq U_{\epsilon_0}$, which contradicts to $x_n\notin U_{\epsilon_0}, \forall n$. 
	
	For any $t\ge t_{1/2}$, define 
	$$\epsilon(t)=\text{inf}\{\epsilon\ |\ (Z_1\cup Z_2)\cap X\times[t,\infty)\subseteq U_\epsilon\times [0, \infty)\}$$
	Also, for any $z\in X_1\cap X_2$, let $L_\epsilon (z)\subseteq U$ correspond to $\{z\}\times[-\epsilon, \epsilon]$. Now set 
	$$E'=\{(x, y, t)\ |\ t<t_{1/2}\ \text{or}\ x=y\
	\text{or}\ x\in X_1\cap X_2\ \text{and}\ y\in L_{\epsilon(t)}(x)\ \text{or}\ y\in X_1\cap X_2\ \text{and}\ x\in L_{\epsilon(t)}(y) \}$$
	$$K'=\pi(K\cap\bar{U})\cup\{p\}$$
	where $\pi: \bar U\rightarrow X_1\cap X_2$ is the projection, and $p\in X_1\cap X_2$ is an arbitrary point. 
	
	We first show $E'\in\mathscr E^G_X$. The only non-trivial part is to show $E'$ satisfies condition (1) in Definition \ref{ecc}. Let $x\in X$ and $V_x$ be a $G_x$-invariant neighborhood of $x$ in $X$. We may assume $x\in X_1$. 
	
	If $x\notin X_1\cap X_2$, then there exists $\epsilon_0>0$ so that $x\notin X_2\cup U_{\epsilon_0}$. Choose $t_{\epsilon_0}>0$ so that $(Z_1\cup Z_2)\cap X\times [t_{\epsilon_0}, \infty)\subseteq U_{\epsilon_0}\times[0, \infty)$. Now let $W_x=V_x\cap(X-X_2\cup U_{\epsilon_0})$ and $N=\max\{t_{\epsilon_0}, t_{1/2}\}$. Then for any $t>N$, we have $\epsilon(t)\le\epsilon_0$. Hence $L_{\epsilon(t)}(z)\subseteq U_{\epsilon_0}$ for any $z\in X_1\cap X_2, t>N$. This implies that if $(y, z, t)\in E', t>N$ and one of $y, z$ lies in $W_x$, then $y=z$ (otherwise we must have $y, z\in U_{\epsilon_0}$ by the definition of $E'$, which contradicts to $U_{\epsilon_0}\cap W_x=\emptyset$).
	
	If $x\in X_1\cap X_2$. Then we can choose a $G_x$-invariant open neighborhood $Q_x$ of $x$ in $X_1\cap X_2$ and $\epsilon_0>0$ so that
	$Q_x\times (-2\epsilon_0, 2\epsilon_0)\subseteq V_x$. Choose $t_{\epsilon_0}>0$ as before. Let $W_x=Q_x\times(-\epsilon_{0}, \epsilon_{0})$ and $N=\max\{t_{\epsilon_0},t_{1/2}\}$. It is not hard to see that if $(y, z, t)\in E', t>N$ and one of $y, z$ lies in $W_x$, then the other one must lie in $Q_x\times [-\epsilon_0, \epsilon_0]\subseteq V_x$. Altogether, we have shown that $E'\in\mathscr E^G_X$.
	
	Now let $g\in G, x\in X, t\ge 0$ so that there exist $x_1\in X_1, x_2\in X_2$ with  $(x, x_1, t), (x, x_2, t)\in E, g^{-1}x, g^{-1}x_1, g^{-1}x_2\in K$. If $x\in X_1\cap X_2$, then we choose $x_0=x$ and we have $(x, x_0, t)\in E', g^{-1}x_0\in K'$. If $t<t_{1/2}$, then we choose $x_0=gp$ and we have $(x, x_0, t)\in E', g^{-1}x_0\in K'$. If $t\ge t_{1/2}$ and $x\notin X_1\cap X_2$. We may assume $x\in X_1-X_2$. Then we have $(x, t)\in Z_2$. Since $t\ge t_{1/2}$, there exists $x_0\in X_1\cap X_2$ so that $x\in L_{\epsilon(t)}(x_0)$. Hence $(x, x_0, t)\in E'$, and $g^{-1}x_0=g^{-1}\pi(x)=\pi(g^{-1}x)\in K'$. Altogether, we have shown that $(X, X_1, X_2)$ is coarsely excisive. 
\end{proof}

\subsection{A vanishing result on $K$-groups}
    For a space $X$, let $\mathcal CX=X\times[0, 1]/X\times\{0\}$ be the cone over $X$ and $\pi: X\times[0, 1]\longrightarrow\mathcal CX$ be the quotient map. For $(x, s)\in X\times[0, 1]$, we denote $\pi(x, s)$ by $[x, s]$. If $G$ acts on $X$, then we endow $X\times[0, 1]$ and $\mathcal CX$ with the $G$-actions given by $g(x, s)=(gx, s)$ and  $g[x, s]=[gx, s]$ respectively. The next lemma on the vanishing of the $K$-theory of $C^*_{L, 0}(G\times\mathcal CX)$ as defined in Remark \ref{obs_loc_alg} will play an important role in proving the homotopy invariance of $LK^G_*$. 
    

\begin{lem}\label{conevanish} For any $G$-CW-complex $X$, the $K$-groups of $C^*_{L, 0}(G\times\mathcal CX)$ vanish. 
\end{lem}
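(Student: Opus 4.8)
The plan is to prove the vanishing by an Eilenberg swindle adapted to the localization algebra, exploiting the radial structure of the cone together with the localization parameter $t$. Write a point of $\mathcal CX$ as $[x,s]$ with $s\in[0,1]$, so that the cone point $c=[x,0]$ is a single $G$-fixed point, and note that $\mathcal CX$ is again a $G$-$CW$-complex. For each $a>0$ let $\Phi_a\colon\mathcal CX\to\mathcal CX$, $\Phi_a([x,s])=[x,e^{-a}s]$, be the $G$-equivariant radial contraction toward $c$; it is induced on $\mathcal H_{G\times\mathcal CX}$ by a $G$-equivariant covering isometry and, by Lemma \ref{ecc_properties}(3), the corresponding conjugation $(\Phi_a)_*$ carries $C^*_L(G\times\mathcal CX)$ into itself. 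For each $r\ge 0$ let $\gamma_r$ be the $t$-shift $\gamma_r(f)(t)=f(\max(t-r,0))$. The crucial elementary observation is that $\gamma_r$ is a $*$-endomorphism of $C^*_{L,0}(G\times\mathcal CX)$ (it preserves $f(0)=0$, and if $f$ is $(S,K,\Lambda,E)$-controlled then $\gamma_r(f)$ is controlled by $E^{(r)}=\{(x',x,t):(x',x,\max(t-r,0))\in E\}\in\mathscr E^G_X$) which is moreover \emph{norm-continuous} in $r$, since $\|\gamma_r f-\gamma_{r'}f\|\le\omega_f(|r-r'|)$ by uniform continuity of $f$; by contrast the naive rescaling $f\mapsto f(rt)$ is not norm-continuous, which is exactly why the shift is used.

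Fix $a,c>0$ and isometries $w_n\colon\mathcal H\to\mathcal H$ with pairwise orthogonal ranges, giving $G$-equivariant isometries $W_n$ on $\mathcal H_{G\times\mathcal CX}$ with orthogonal ranges, each covering the identity. Define the $n$-th copy
$$C_n(f)=W_n\,(\Phi_a^{\,n})_*\big(\gamma_{nc}(f)\big)\,W_n^*,\qquad n\ge 0,$$
so that $C_0=\mathrm{Ad}(W_0)$ and $C_n$ is the identity pushed $n$ steps toward $c$ and delayed by $nc$ in the localization variable. I would set $S(f)=\sum_{n\ge 0}C_n(f)$ and prove that $S$ is a well-defined $*$-endomorphism of $C^*_{L,0}(G\times\mathcal CX)$. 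Here the two control conditions cooperate exactly: at each fixed $t$ only the copies with $nc<t$ are nonzero (since $\gamma_{nc}(f)(t)=f(0)=0$ once $nc\ge t$), so $S(f)(t)$ is a \emph{finite} orthogonal sum and hence locally compact; while for continuous control at infinity near $c$, the copies that only switch on near time $t$ (those with $n\approx t/c$) have small internal argument $t-nc$ and therefore large propagation, but these are precisely the copies contracted by the tiny factor $e^{-na}$, which squeezes their entire support into any prescribed neighborhood of $c$, and the remaining finitely many copies inherit continuous control from $f$ itself. The $G$-compact support and $G$-propagation conditions persist because $\Phi_a$ is $G$-equivariant and fixes the $G$-direction and $\bigcup_n\Phi_a^{\,n}(K)$ has compact closure for compact $K$. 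Orthogonality of the ranges $W_n$ makes $S$ multiplicative.

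With $S$ in hand I run the swindle. Using the norm-continuity of $\gamma_r$ and a continuous choice of covering isometries for $\Phi_{ar}$, the family
$$H_v(f)=\sum_{n\ge 0}W_n\,(\Phi_a^{\,n+v})_*\big(\gamma_{(n+v)c}(f)\big)\,W_n^*,\qquad v\in[0,1],$$
is a norm-continuous path of $*$-endomorphisms of $C^*_{L,0}(G\times\mathcal CX)$ (the two estimates above hold uniformly in $v$), with $H_0=S$ and $H_1=\sum_{n\ge0}W_n(\Phi_a^{\,n+1})_*(\gamma_{(n+1)c}f)W_n^*$. The latter is conjugate to $S':=\sum_{n\ge1}C_n$ by the internal slot-shift $W_n\mapsto W_{n+1}$ (a purely internal conjugation, pointwise in $t$), so by Lemma \ref{conjugate} one has $S_*=(H_0)_*=(H_1)_*=S'_*$. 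On the other hand $S=C_0\oplus S'$ is an orthogonal sum of homomorphisms, so $S_*=(C_0)_*+S'_*$. Subtracting gives $(C_0)_*=0$; but $C_0=\mathrm{Ad}(W_0)$ with $W_0$ covering the identity, whence $(C_0)_*=\mathrm{id}$ on $K_*\big(C^*_{L,0}(G\times\mathcal CX)\big)$ again by Lemma \ref{conjugate}. Therefore the identity map is zero on $K$-theory, i.e. $K_*\big(C^*_{L,0}(G\times\mathcal CX)\big)=0$.

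The main obstacle I anticipate is exactly the verification that $S$ and each $H_v$ genuinely land in $C^*_{L,0}(G\times\mathcal CX)$. The point is that the two control conditions pull in opposite directions: a $t$-shift by itself would reintroduce the uncontrolled small-$t$ propagation of $f$ at large $t$ and destroy continuous control at infinity, whereas contraction by itself (infinite internal multiplicity over the same support, with no shift) would place infinitely many copies at every $t$ and destroy local compactness. Choosing the contraction rate $a$ and the delay rate $c$ so that these two effects balance, uniformly in $n$ \emph{and} in the homotopy parameter $v$, is the technical heart of the argument, and it is precisely what makes the cone (rather than a general $G$-space) special; keeping the relevant controlled sets inside $\mathscr E^G_X$ throughout relies on Lemma \ref{ecc_properties}.
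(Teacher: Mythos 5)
Your construction of $S$ itself is plausible (the balance you describe between the delay $\gamma_{nc}$ and the contraction $e^{-na}$ does make $S(f)$ locally compact and continuously controlled), but the comparison step $S_*=S'_*$ breaks down in two places, and the second is exactly the pitfall the paper warns against. First, the family $H_v$ is not norm-continuous in $v$: a covering isometry for $\Phi_{a(n+v)}$ must send the summand $\mathcal H_{[x,s]}$ into $\mathcal H_{[x,e^{-a(n+v)}s]}$, and for $s>0$ these targets are \emph{mutually orthogonal} summands of $\mathcal H_{\mathcal CX}$ for distinct $v$; hence any choice of covering isometries satisfies $\|V_{a(n+v)}-V_{a(n+v')}\|=\sqrt2$ for $v\ne v'$, and there is no ``continuous choice of covering isometries for $\Phi_{ar}$.'' The paper's proof avoids this by never homotoping the contraction parameter at all: the only genuine homotopy used is the $t$-shift $H_s$ (which is norm-continuous by uniform continuity of $f$, as you note), and the passage from one slot to the next is effected by conjugation with a partial isometry $\Omega$ built from the rotation matrices $R(t)(V_{k-1}\oplus V_k)R^*(t)$ in a doubled Hilbert space, followed by Lemma \ref{conjugate}.

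Second, and more fundamentally, even if you replace $H_v$ by the slot-shift conjugation directly, the conjugating partial isometry must carry the $n$-fold contracted copy onto the $(n+1)$-fold contracted copy, so the product $\Omega\cdot S(f)$ has support relating $[x',e^{-(n+1)a}s']$ to $[x,e^{-na}s]$. With a contraction factor $e^{-a}$ that is \emph{independent of $t$}, these two points differ radially by the fixed ratio $e^{-a}$ no matter how large $t$ is; taking a point $[x_0,s_0]$ with $s_0>0$ in the closure of the supports, condition (1) of Definition \ref{ecc} fails at $[x_0,s_0]$, so $\Omega\cdot S(f)$ is not in $C^*_{L,0}(G\times\mathcal CX)$ and Lemma \ref{conjugate} cannot be applied. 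No choice of the constants $a$ and $c$ repairs this, because the offending displacement is independent of both. This is precisely why the paper takes the $k$-th pushing to be $\theta_k([x,s])=[x,2^{-1/(k+1)}s]$, with contraction ratio tending to $1$, and explicitly remarks that $[x,\tfrac12 s]$ (your $\Phi_a$) ``will not work'': the extra displacement incurred by one more pushing then tends to zero as $t\to\infty$, and the verification that the resulting sets remain equivariantly continuously controlled is the content of Lemma \ref{conecontrol}, which is the technical heart of the paper's argument and is absent from your proposal.
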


\begin{proof}
	We are going to construct an Eilenberg swindle on $C^*_{L, 0}(G\times\mathcal CX)$. The idea is to push elements of $C^*_{L, 0}(G\times\mathcal CX)$ towards the cone point of $\mathcal CX$ along with a pushing in the $t$-direction towards infinity. 
	
	For any  $0\le s\le 1$ and $f\in C_{L,0}[G\times\mathcal CX]=C^*_{L,0}(G\times\mathcal CX)\cap C_{L}[G\times\mathcal CX]$, 
	define
	\begin{align*}
	H_s(f)(t) &=
	\begin{cases}
	0        & \text{if } 0\leq t<s \\
	f(t-s)        & \text{if } t\geq s
	\end{cases}
	\end{align*}
	$H_s(f)$ is still continuously controlled at infinity for each $s$. Also, by uniform continuity of $f$, $H_s(f)$ is continuous in $s$ (note that the usual contracting homotopy $C_s(f)(t)=f(st)$ is discontinuous in $s$).  Hence it gives a homotopy
	$$H_s: C^*_{L, 0}(G\times\mathcal CX)\longrightarrow C^*_{L, 0}(G\times \mathcal CX)$$
	from $id=H_0$ to $\mathcal S=H_1$.  So on $K$-groups, $\mathcal S_*$ is the identity morphism. $\mathcal S$ will be our pushing in the $t$-direction towards  infinity. However, it does not give a swindle directly since the continuous control condition at infinity will not be preserved after applying  $\mathcal S$ infinitly many times.

	We now define the pushing towards the cone point.
	For each $k=0, 1,2, \cdots$, define 
	$$\theta_k: \mathcal CX\longrightarrow \mathcal CX,\ \ \ \ \theta_k\big([x, s]\big)=[x, 2^{-\frac{1}{k+1}}s]$$
	We want to emphasize that it is important to have the pushing become smaller and smaller as $k\rightarrow\infty$ (as we have here $\ds\lim_{k\to\infty}2^{-\frac{1}{k+1}}s=s$) so that part (3) of the claim below holds and we can apply Lemma \ref{conjugate}. Pushing such as $\theta_k\big([x, s]\big)=[x, \frac{1}{2}s]$ or $\theta_k\big([x, s]\big)=[x, \frac{1}{k}s]$ will not work. 
	
	Choose an isometry 
	$$W_k: \mathcal H_{\mathcal CX}\longrightarrow\mathcal H_{\mathcal CX}$$
	so that it covers $\theta_k$, meaning $W_k(\mathcal H_{[x, s]})\subseteq\mathcal H_{[x, 2^{-\frac{1}{k+1}}s]}$. This is possible since $\theta_k$ is injective.  $W_k$ determines a unique $G$-invariant isometry
		$$V_k: \mathcal H_{G\times\mathcal CX}\longrightarrow\mathcal H_{G\times\mathcal CX}$$
	so that $V_k\big |_{\{e\}\times\mathcal CX}=W_k$ and $V_k(\mathcal H_{(g, [x, s])})\subseteq\mathcal H_{(g, [x, 2^{-\frac{1}{k+1}}s])}$.
	
	Define a family of isometries $U(t), t\ge 0$ from $\mathcal H_{G\times\mathcal CX}\oplus\mathcal H_{G\times\mathcal C X}$ to $\mathcal H_{G\times\mathcal CX}\oplus\mathcal H_{G\times\mathcal CX}$ by
	$$U(t)=R(t-k+1)(V_{k-1}\oplus V_{k})R^*(t-k+1),\ k-1\le t<k,\ k=1, 2, \cdots$$
	where $$R(t)=
	\begin{bmatrix}
	\ \ \cos(\frac{\pi t}{2}) & \sin(\frac{\pi t}{2})\\
	-\sin(\frac{\pi t}{2}) & \cos(\frac{\pi t}{2})
	\end{bmatrix}$$
	
	Let $\mathcal B$ be the $C^*$-algebra of all bounded functions from $[0, \infty)$ to $B(\mathcal H_{G\times\mathcal CX})$. 
	For $n=0, 1, 2\cdots$, and $\sigma\in M_2(\mathcal B)$, define
	$\mathcal S^n(\sigma)$ by
	\begin{align*}
	\mathcal S^n(\sigma)(t) &=
	\begin{cases}
	0        & \text{if } 0\leq t<n \\
	\sigma(t-n)        & \text{if } t\geq n
	\end{cases}
	\end{align*}
	and for  $f\in C_{L, 0}[G\times\mathcal CX]$, define
	$$f[n]=\mathcal S^n\big(U^n(f\oplus 0)(U^n)^*\big)$$ 
	We have $$f[n+1]=\mathcal S\big(\mathcal S^n(U)f[n]\mathcal S^n(U)^*)$$
	
    Choose a unitary isomorphism $\eta: \oplus_{\infty}\mathcal H\longrightarrow\mathcal H$, where $\oplus_{\infty}\mathcal H$ denotes the direct sum of countably and infinitly many copies of $\mathcal H$. It induces a unitary isomorphism
    $$\bar\eta: \oplus_{\infty}\mathcal H_{G\times\mathcal CX}\longrightarrow\mathcal H_{G\times\mathcal CX}$$
    $$\bar\eta\big((v_i)_{i=1}^\infty\big)(z)=\eta\big((v_i(z))_{i=1}^\infty\big), \ \ v_i\in\mathcal H_{G\times\mathcal CX},\  z\in G\times\mathcal CX$$
    For $t\ge 0$, define $\Phi(f)(t), \Psi(f)(t)$ and $\Omega(t)$ by
    $$\Phi(f)(t)=\bar{\eta}\circ(f[0](t)\oplus 0\oplus 0\oplus\cdots)\circ\bar{\eta}^*=\bar{\eta}\circ(f(t)\oplus 0\oplus 0\oplus 0\oplus\cdots)\circ\bar{\eta}^*$$
    $$\Psi(f)(t)=\bar\eta\circ\big(0\oplus 0\oplus f[1](t)\oplus f[2](t) \oplus\cdots\big)\circ\bar\eta^*$$
    $$\Omega(t)=\bar\eta\circ\big(\Omega_{ij}(t)\big)\circ\bar\eta^*$$
    where the $\mathbb N\times\mathbb N$ matrix $\big(\Omega_{ij}(t)\big)$ of operators represents the operator $\oplus_{\infty}\mathcal H_{G\times\mathcal CX}\longrightarrow\oplus_{\infty}\mathcal H_{G\times\mathcal CX}$, so that each $\Omega_{ij}(t)$ is a $2\times 2$ matrix of operators  with $\Omega_{ij}(t)=0$ if $i\ne j+1$ and $\Omega_{j+1, j}(t)=\mathcal S^{j-1}(U)(t)$.
    
    We shall prove the following claim:
	\vskip 10pt
	\noindent
	\underline{Claim:} 
   For any $f\in C_{L, 0}[G\times\mathcal CX]$, we have
	\begin{enumerate}
		\item $\Phi(f), \Psi(f)\in C_{L, 0}[G\times\mathcal CX]$, and  
		      $\Phi, \Psi$ extend as homomorphisms from $C^*_{L, 0}(G\times\mathcal CX)$ to itself;
		\item $\Omega\in\mathcal B$ is a partial isometry and $\big(\Phi(f)+\Psi(f)\big)\Omega^*\Omega=\Phi(f)+\Psi(f)$;
		\item $\Omega\cdot(\Phi(f)+\Psi(f))\in C_{L, 0}[G\times\mathcal  
		      CX]$;
		\item $\mathcal S\big(\Omega\cdot\big(\Phi(f)+\Psi(f)\big)\cdot\Omega^*\big)=\Psi(f)$. So in particular, $\Omega\cdot\big(\Phi(f)+\Psi(f)\big)\cdot\Omega^*\in C^*_{L, 0}(G\times\mathcal CX)$;
		\item on the level of $K$-groups, $\Phi_*$ is the identity map. 
	\end{enumerate}
    
    \vskip 10pt
    Assuming the claim, the lemma follows. To see this, note that since $\Phi(f)\Psi(g)=0$ for any $f, g\in C^*_{L, 0}(G\times\mathcal CX)$,  $\Phi+\Psi$ is also a homomorphism and $(\Phi+\Psi)_*=\Phi_*+\Psi_*$ on $K$-groups (\cite[Lemma 4.6.4]{HR}).
    (1)-(4) of the claim above enable us to apply Lemma \ref{conjugate} to Ad$(\Omega)\circ(\Phi+\Psi)$. Together with  $\mathcal S_*=id$, we get $\Phi_*+\Psi_*=\Psi_*$. Hence $\Phi_*=0$, which implies the $K$-groups of $C^*_{L, 0}(G\times\mathcal CX)$ vanish by (5) of the claim. 
    
    \vskip 10pt
    We now prove the claim.
    
    \vskip 10pt
    \noindent
    (1) Clearly, $\Phi(f)$ is still bounded and uniformly continuous. We have
    $$\Phi(f)(t)_{z', z}=\eta\circ\big(f(t)_{z',z}\oplus 0\oplus 0\oplus\cdots\big)\circ\eta^*,\ \forall z, z'\in G\times\mathcal CX$$
    Therefore $\Phi(f)(t)$ is locally compact and has the same support condition as $f$. Hence $\Phi(f)\in C_{L, 0}[G\times\mathcal CX]$.
    
    $\Psi(f)$ is easily seen to be a bounded function in $t$, and as we have seen in the proof of Lemma \ref{ideal}, each $f[n]$ is continuous in $t$ (although $U^n$ is discontinuous at $t=1,2,\cdots$). We compute that each entry of $f[n](t)$ is $0$ if $t\le n$ and is of the form
    $$\big(a(t)V^n_{k-n-1}+b(t)V^n_{k-n}\big)\circ f(t-n)\circ\big(c(t)(V_{k-n-1}^n)^*+d(t)(V_{k-n}^n)^*\big)$$
    if $t\ge n$, where $a(t), b(t), c(t), d(t)$ are products of sine and cosine functions in $t$ and $k$ is the integer so that $k-1\le t<k$. From this, we see that the family $f[n], n=0, 1, \cdots$, is uniformly equicontinuous in $t$. Thus $\Psi(f)$ is uniformly continuous in $t$. $\Psi(f)(t)$ is locally compact because
    for each $t$, there exists $N\in\mathbb N$, so that $f[n](t)=0, \forall n>N$. 
    
    For the support  of $\Psi(f)(t)$, we compute that 
    $$\Psi(f)(t)_{z', z}=\eta\circ\big(0\oplus 0\oplus f[1](t)_{z', z}\oplus f[2](t)_{z', z} \oplus\cdots\big)\circ\eta^*, \ \forall z, z'\in G\times\mathcal CX$$
    where $f[n](t)_{z', z}$ is the matrix obtained from taking the $(z', z)$-component of each entry of $f[n](t)$. Then $\Psi(f)(t)_{z', z}\ne 0$ implies $(z', z, t)\in E_f$, where
    $$E_f=\{(z', z, t)\ |\ z, z'\in G\times\mathcal CX, \exists n\in\mathbb N,\ i,j\in\{0, 1\}\ \text{s.t.}\ t\ge n\ \text{and}$$ $$\big(V^n_{k-n-i}\circ f(t-n)\circ(V^n_{k-n-j})^*\big)_{z', z}\ne 0, 
    \text{where}\ k\in\mathbb N\ \text{s.t.}\ k-1\le t<k\}$$
     Therefore, to show $\Psi(f)\in C_{L, 0}[G\times\mathcal CX]$, it remains to show $E_f$ satisfies the following properties:
    
    \begin{enumerate}[label=(\roman*)]
    	\item for each $t\ge 0$, there exists a finite subset $F_t\subseteq\mathcal CX$ (depending on $t$), so that if we have $\big((g', [x', s']), (g, [x, s]), t\big)\in E_f$, then $[g^{-1}x, s], [(g')^{-1}x', s']\in F_t$;
        \item there exist a finite subset $S\subseteq G$, a compact subset $K\subseteq\mathcal CX$, a countable $G$-invariant subset $\Lambda\subseteq\mathcal CX$ (all of them are independent of $t$), and $E\in\mathscr E^G_{\mathcal CX}$, so that
        $\big((g', [x', s']), (g, [x, s]), t\big)\in E_f$ implies $ [g^{-1}x, s], [(g')^{-1}x', s']\in K, g^{-1}g'\in S$, $[x, s], [x', s']\in\Lambda$ and
        $\big([x', s'], [x, s], t\big)\in E$.
    \end{enumerate}

    By definition, there exist a finite subset $S_1\subseteq G$, a compact subset $K_1\subseteq\mathcal CX$, a countable $G$-invariant subset $\Lambda_1\in\mathcal CX$,  and a set $E_1\in\mathscr E^G_{\mathcal CX}$ so that $f$ is $(S_1, K_1,\Lambda_1, E_1)$-controlled. Also, for each $t\ge 0$, there exists a finite subset $F'_{t}\subseteq\mathcal CX$ (depending on $t$) so that $f(t)_{(g', [x', s']), (g, [x, s])}\ne 0$ implies $[g^{-1}x, s], [(g')^{-1}x', s']\in F'_t$.
    
    For the compact set $K_1$, there exists a compact set $K_2\subseteq X\times[0, 1]$ so that $K_1\subseteq\pi(K_2)$ (this can be proved by using the fact that every compact subset of a $CW$-complex is contained in a finite subcomplex). Let $K_3$ be the projection of $K_2$ onto $X$. Now let
    $$S=S_1,\ K=\pi(K_3\times[0, 1])$$ 
    $$\Lambda=\bigcup_{k, n\in\mathbb N, k>n, i=0, 1}\{[x, s]\in\mathcal CX\ |\ [x, 2^{\frac{n}{k-n-i+1}}s]\in\Lambda_1\}$$
    $$E=\{\big([x', s'], [x, s], t\big)\ |\ [x', s'], [x, s]\in G\cdot K,\ \exists n\in\mathbb N,\ i,j\in\{0, 1\}\ \text{s.t.}\ t\ge n\ \text{and}$$
    $$ \big([x', 2^{\frac{n}{k-n-i+1}}s'], [x, 2^{\frac{n}{k-n-j+1}}s], t-n\big)\in E_1, \text{where}\ k\in\mathbb N\ \text{s.t.}\ k-1\le t< k \}$$
    Also for fixed $t\ge 1$, let $k\in\mathbb N$ so that $k-1\le t<k$ and
    $$F_t=\bigcup_{n\in\mathbb N, n\le t, i=0, 1}\{[x, s]\in\mathcal CX\ |\ [x, 2^{\frac{n}{k-n-i+1}}s]\in F'_{t-n}\},$$
    We show $(S, K, \Lambda, E)$ and $F_t$ are as desired. Certainly, $S$ and $F_t$ are finite, $K$ is compact, $\Lambda$ is countable and by Lemma \ref{conecontrol} below, $E$ is contained in a set in $\mathscr E^G_{\mathcal CX}$. 
    Now if
    $$\big((g', [x', s']), (g, [x, s]), t\big)\in E_f$$
    Then there exist $n\in\mathbb N, i, j\in\{0, 1\}$ so that $t\ge n$ and 
    $$\big(V^n_{k-n-i}\circ f(t-n)\circ(V^n_{k-n-j})^*\big)_{(g', [x', s']), (g, [x, s])}\ne 0$$
    where $k$ is the integer with $k-1\le t< k$.
    This implies
    $$f(t-n)_{(g', [x', 2^{\frac{n}{k-n-i+1}}s']), (g, [x, 2^{\frac{n}{k-n-j+1}}s])}\ne 0$$
    Hence
    $$[g^{-1}x, 2^{\frac{n}{k-n-j+1}}s],[(g')^{-1}x', 2^{\frac{n}{k-n-i+1}}s']\in F'_{t-n}\cap K_1,\ [x, 2^{\frac{n}{k-n-j+1}}s], [x', 2^{\frac{n}{k-n-i+1}}s']\in\Lambda_1 $$
    $$g^{-1}g'\in S_1,\ \big([x', 2^{\frac{n}{k-n-i+1}}s'], [x, 2^{\frac{n}{k-n-j+1}}s], t-n\big)\in E_1$$
    Therefore
    $$g^{-1}g'\in S,\ [g^{-1}x, s], [(g')^{-1}x', s']\in F_t\cap K,\ [x, s], [x', s']\in\Lambda,\ ([x', s'], [x, s], t)\in E,$$
    which shows $\Psi(f)\in C_{L, 0}[G\times\mathcal CX]$. 
    Finally, $\Phi, \Psi$ extend as homomorphisms from $C^*_{L, 0}(G\times\mathcal CX)$ to itself since they are homomorphisms on $C_{L, 0}[G\times\mathcal CX]$ which are norm preserving.
    
    \vskip 10pt
    \noindent
    (2) A direct computation shows that
    $$\Omega^*(t)\circ\Omega(t)=\bar\eta\circ(I\oplus\mathcal S(I)\oplus\mathcal S^2(I)\oplus\cdots)\circ\bar{\eta}^*$$
    where $I(t)=1, t\ge 0$ with $1$ the identity operator on $\mathcal H_{G\times\mathcal CX}\oplus\mathcal H_{G\times\mathcal CX}$. It follows that
    $\Omega^*(t)\circ\Omega(t)\circ\Omega^*(t)=\Omega^*(t)$, which implies $\Omega$ is a partial isometry, and $\big(\Phi(f)+\Psi(f)\big)\Omega^*\Omega=\Phi(f)+\Psi(f)$.
      
    \vskip 10pt
    \noindent 
    (3) We compute that
    $$\Omega(t)\circ\big(\Phi(f)(t)+\Psi(f)(t)\big)=\bar{\eta}\circ\big(\bar{\Omega}_{ij}(t)\big)\circ\bar{\eta}^*$$
    where each $\bar{\Omega}_{ij}(t)$ is a $2\times 2$ matrix of operators with $\bar{\Omega}_{ij}(t)=0$ if $i\ne j+1$ and $$\bar{\Omega}_{j+1, j}(t)=\mathcal S^{j-1}(U)(t)f[j-1](t)=\mathcal S^{j-1}\big(U^j(f\oplus 0)(U^{j-1})^*\big)(t),\ \ j=1, 2, \cdots$$
    From here, we see that $\Omega(t)\circ\big(\Phi(f)(t)+\Psi(f)(t)\big)$ is locally compact and uniformly continuous in $t$ (the argument is similar to that for $\Psi(f)$).
    
    To figure out the support condition, we compute that $\bar{\Omega}_{j+1, j}(t)=0$ if $t\le j-1$ and 
    $$\bar{\Omega}_{j+1, j}(t)=\big(a(t)V_{k-n-1}^{n+1}+b(t)V^{n+1}_{k-n}\big)\circ f(t-n)\circ\big(c(t)(V_{k-n-1}^n)^*+d(t)(V_{k-n}^n)^*\big)$$
    if $t\ge j-1$, where we have set $n=j-1$ and $k$ is the integer so that $k-1\le t< k$.
    Therefore, the situation here is similar to that of $\Psi(f)$. 
    The major part is to prove the following set 
    $$E=\{\big([x', s'], [x, s], t\big)\ |\ [x', s'], [x, s]\in G\cdot K,\ \exists n\in\mathbb Z,\ i,j\in\{0, 1\}\ \text{s.t.}\ t\ge n\ge 0\ \text{and}$$
    $$ \big([x', 2^{\frac{n+1}{k-n-i+1}}s'], [x, 2^{\frac{n}{k-n-j+1}}s], t-n\big)\in E_1, \text{where}\ k\in\mathbb N\ \text{s.t.}\ k-1\le t< k \}$$
    is contained in a set in $\mathscr E^G_{\mathcal CX}$ for any compact subset $K\subseteq\mathcal CX$ and $E_1\in\mathscr E^G_{\mathcal CX}$. But this follows from Lemma \ref{conecontrol} below.

    \vskip 10pt
    \noindent 
    (4) This follows from a direct calculation using $f[n+1]=\mathcal S\big(\mathcal S^n(U)f[n]\mathcal S^n(U)^*)$.
    \vskip 10pt
    \noindent 
    (5) Consider the isometry $\bar J: \mathcal H_{G\times\mathcal CX}\longrightarrow\oplus_{\infty}\mathcal H_{G\times\mathcal CX},\ \ \bar J(v)=(v, 0, \cdots)$. We then have $$\Phi(f)(t)=\bar\eta\circ\bar{J}\circ f(t)\circ\bar {J}^*\circ\bar{\eta}^*$$
    Let $V=\bar{\eta}\circ\bar{J}$, and define $\mathcal V(t)=V, t\ge 0$.
    Then $\mathcal V\in\mathcal B$ is an isometry and  $\Phi(f)=\text{Ad}(\mathcal V)(f)$. One checks that $\mathcal V\cdot f\in C^*_{L, 0}(G\times\mathcal CX)$ for any $f\in C^*_{L, 0}(G\times\mathcal CX)$. Therefore $\Phi_*=id$ on the level of $K$-groups by Lemma \ref{conjugate}.
\end{proof}

\begin{lem}\label{conecontrol} Let $X$ be a $G$-CW-complex and $\mathcal CX$ be the cone over $X$ with the obvious $G$-action. Then for any $E_1\in\mathscr E^G_{\mathcal CX}$ and compact subset $K\subseteq\mathcal CX$, the set 
$$E=\{\big([x', s'], [x, s], t\big)\ |\ [x', s'], [x, s]\in G\cdot K,\ \exists n\in\mathbb Z,\ i,j, l\in\{0, 1\}\ \text{s.t.}\ t\ge n\ge 0\ \text{and}$$
$$ \big([x', 2^{\frac{n+l}{k-n-i+1}}s'], [x, 2^{\frac{n}{k-n-j+1}}s], t-n\big)\in E_1, \text{where}\ k\in\mathbb N\ \text{s.t.}\ k-1\le t< k \}$$
is contained in a set in $\mathscr E^G_{\mathcal CX}$.
\end{lem}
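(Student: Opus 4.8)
The plan is to exhibit an explicit superset $\bar E\supseteq E$ and check that $\bar E\in\mathscr E^G_{\mathcal CX}$. I would take $\bar E=E\cup E^{\mathrm{op}}\cup\Delta$, where $E^{\mathrm{op}}$ interchanges the two cone coordinates and $\Delta$ is the diagonal. The condition $[x',s'],[x,s]\in G\cdot K$ and membership in $E_1$ are both $G$-invariant, so $E$, and hence $\bar E$, is $G$-invariant; $\bar E$ is symmetric and contains $\Delta$ by construction, so conditions (2)--(4) of Definition \ref{ecc} hold automatically, and everything reduces to the continuous-control condition (1). Since $\bar E$ is symmetric, it suffices to prove, for triples in $E$, that ``the second coordinate lying in $V$ forces the first into $U$'' (the reverse implication is the same assertion for $E^{\mathrm{op}}$). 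Throughout I abbreviate $m=k-n\ge 1$ and $\tau=t-n$; from $k-1\le t<k$ and $0\le n\le t$ one has $\tau\in[m-1,m)$, so $m\to\infty$ forces $\tau\to\infty$. The two exponents occurring in the definition are $a=\frac{k-m}{m-j+1}$ and $a'=\frac{k-m+l}{m-i+1}$ with $i,j,l\in\{0,1\}$, both $\ge 0$, and the requirement that $[x,2^{a}s]$ and $[x',2^{a'}s']$ lie in $\mathcal CX$ forces $2^{a}s\le 1$ and $2^{a'}s'\le 1$.

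Two elementary facts drive the argument. A direct computation gives $a'-a=\big((k-m)(i-j)+l(m-j+1)\big)\big/\big((m-i+1)(m-j+1)\big)$, whence $|a'-a|\le (k+1)/m^{2}$ and $a'\ge \frac{m}{m+1}a\ge \frac12 a$. These yield a dichotomy for any relation recorded at large $t$: either $a\le C$, in which case $k-m\le C(m+1)$ forces $m\ge (k-C)/(C+1)$, so that $\tau\to\infty$ while $|a'-a|\le C_1/m\to0$; or $a>C$, in which case $2^{a}s\le1$ and $a'\ge a/2$ force $s\le 2^{-C}$ and $s'\le 2^{-C/2}$, pushing both endpoints toward the cone point. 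Thus unbounded scaling is always accompanied by proximity to the cone point, while bounded scaling is always accompanied by large $E_1$-time and by the two exponents coalescing. I will also use that, lifting $K$ to a compact $\tilde K\subseteq X\times[0,1]$ with image $K_X\subseteq X$ (as in the proof of Lemma \ref{conevanish}), all rescaled points again lie in the fixed compact set $K'=\pi(K_X\times[0,1])\supseteq K$; a routine compactness argument then shows that for every $G$-invariant open neighborhood $W$ of the cone point $c$ there is $\delta_W>0$ with $\{[x,s]\in G\cdot K':s<\delta_W\}\subseteq W$.

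To verify condition (1) at an interior point $p=[x_0,s_0]$ with $s_0>0$: after shrinking the given $G_p$-invariant neighborhood $U$, any nearby participating point has $s\ge s_0/2$, so $a\le C:=\log_2(2/s_0)$ and we are in the bounded regime, where $\tau\to\infty$ and $|a'-a|\le C_1/m\to0$. The possible rescaled positions of $p$ sweep out the compact arc $\{[x_0,r]:s_0\le r\le 1\}$; I would cover it by finitely many neighborhoods on which the control of $E_1$ at infinity applies, taking each such target to be the $2^{a}$-rescaled image of a slightly shrunken copy of $U$ with a margin that absorbs the factor $2^{|a'-a|}\to1$, and then take the largest resulting threshold $N$ and the common shrinking neighborhood $V\subseteq U$. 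Because the rescalings $\sigma_a\colon[x,s]\mapsto[x,2^{a}s]$ with $a\in[0,C]$ form an equicontinuous family and $a'-a\to0$, the $E_1$-comparison of $[x,2^{a}s]$ and $[x',2^{a'}s']$ transports back to the desired comparison of $[x,s]$ and $[x',s']$.

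To verify condition (1) at the cone point $c$ (where $G_c=G$): given a $G$-invariant open $U\ni c$, choose $\delta_0$ with $\{s<\delta_0\}\cap G\cdot K'\subseteq U$, and apply the control of $E_1$ at $c$ with the legitimate basic target $W=\{s<\delta_0\}$ to obtain $N_1$ and a $G$-invariant $V_1$; then pick $\delta_1'$ with $\{s<\delta_1'\}\cap G\cdot K'\subseteq V_1$, pick $C$ with $2^{-C/2}<\delta_0$ and $N$ with $a\le C\Rightarrow\tau>N_1$, and set $V=\{s<\delta_1\}$, $\delta_1=\delta_1'2^{-C}$. If a relation with $[x,s]\in V$ is in the large regime $a>C$, then $s'\le 2^{-C/2}<\delta_0$ already gives $[x',s']\in U$; if it is in the bounded regime $a\le C$, then $2^{a}s<\delta_1'$ puts $[x,2^{a}s]\in V_1$ with $\tau>N_1$, so $E_1$-control yields $2^{a'}s'<\delta_0$, hence $s'\le 2^{a'}s'<\delta_0$ and $[x',s']\in U$. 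Together with the analogous statement for $E^{\mathrm{op}}$ (exchange $a\leftrightarrow a'$), this shows $\bar E\in\mathscr E^G_{\mathcal CX}$, and $E\subseteq\bar E$ proves the lemma. \emph{The main obstacle is exactly the cone point under unbounded scaling}: there $E_1$ supplies no control because $\tau$ stays bounded, and the resolution is to notice that the in-cone constraint $2^{a}s\le1$ together with $a'\ge a/2$ already confines both endpoints to an arbitrarily small neighborhood of $c$, decoupling the estimate from the unavailable control of $E_1$.
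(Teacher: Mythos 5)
Your proposal is correct and follows essentially the same route as the paper's proof: both reduce to verifying the continuous-control condition separately at the cone point and at interior points, and both rest on the same dichotomy --- a large rescaling exponent combined with the in-cone constraint $2^{a}s\le 1$ forces both endpoints toward the cone point, while a bounded exponent forces $t-n\to\infty$ and $|a'-a|\to 0$ so that the control of $E_1$ (applied after covering a compact arc by finitely many neighborhoods) transports back. The only cosmetic differences are that you parametrize the dichotomy by the exponent rather than by $t-n$, and that you symmetrize $E$ explicitly where the paper does so implicitly.
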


\begin{proof}
	It is clear that $E$ is $G$-invariant. Therefore it suffices to show $E$ satisfies condition (1) of Definition \ref{ecc}. Choose a compact subset
	$L\subseteq X$ so that $K\subseteq\pi(L\times[0, 1])$. Let $[x_0, s_0]\in\mathcal CX$ and $U$ be a $G_{[x_0, s_0]}$-invariant open neighborhood of $[x_0, s_0]$ in $\mathcal CX$. We consider in two cases.
	
	\vskip 10pt
	\noindent
	Case 1: $s_0=0$. Then $G_{[x_0, s_0]}=G$ and $\pi^{-1}(U)\subseteq X\times[0, 1]$ is a $G$-invariant open neighborhood of $X\times\{0\}$.
	By compactness of $L$, there exists $s_1>0$ so that $L\times[0, s_1]\subseteq\pi^{-1}(U)$. This implies that $(G\cdot L)\times[0, s_1]\subseteq\pi^{-1}(U)$. Let $U_1=U\cap\pi\big(X\times[0, s_1)\big)$. Then $U_1$ is a $G_{[x_0, s_0]}$-invariant open neighborhood of $[x_0, s_0]$.  Therefore, since $E_1\in\mathscr E^G_{\mathcal CX}$, there exist $N_1>1$ and a $G_{[x_0, s_0]}$-invariant open neighborhood $V_1\subseteq U_1$ of $[x_0, s_0]$ so that if $([x', s'], [x, s], t)\in E_1$ and $t>N_1$, then one of $[x', s'], [x, s]$ lies in $V_1$ implies the other lies in $U_1$. Choose $s_2>0$ so that $(G\cdot L)\times[0, s_2]\subseteq\pi^{-1}(V_1)$ and $N_2>1$ so that $2^{-\frac{N_2}{N_1+2}}<s_1$.
	
	Now let $N=N_1+N_2$ and $V=V_1\cap\pi\big(X\times[0, 2^{-\frac{4N_2}{N_1+2}}s_2)\big)$. Then $V\subseteq U$ is a $G$-invariant open neighborhood of $[x_0, s_0]$ in $\mathcal CX$.
    Assume $\big([x', s'], [x, s], t\big)\in E,\ t>N$. We show if one of $[x, s], [x', s']$ lies in $V$, then the other lies in $U$. This will complete the proof for Case 1.   By definition, we have $[x', s'], [x, s]\in G\cdot K$ and there exist $n\in\mathbb Z,\ i,j, l\in\{0, 1\}$ so that $t\ge n\ge 0,
	\big([x', 2^{\frac{n+l}{k-n-i+1}}s'], [x, 2^{\frac{n}{k-n-j+1}}s], t-n\big)\in E_1, \text{where}\ k\in\mathbb N\ \text{s.t.}\ k-1\le t< k$. 
	\vskip 10pt
	Subcase 1: $t-n\le N_1$. Then $n>N_2$ and $k-n\le N_1+1$. Hence
	$$1\ge 2^{\frac{n+l}{k-n-i+1}}s'>2^{\frac{N_2}{N_1+2}}s',\ \  1\ge 2^{\frac{n}{k-n-j+1}}s>2^{\frac{N_2}{N_1+2}}s.$$
	Therefore $$s', s<2^{-\frac{N_2}{N_1+2}}<s_1.$$
	We also have $$[x', s'], [x, s]\in G\cdot K\subseteq G\cdot\pi(L\times[0, 1])=\pi(G\cdot L\times[0, 1]).$$
	Thus $$[x', s'], [x, s]\in\pi(G\cdot L\times[0, s_1))\subseteq U.$$
	
	\vskip 10pt
	Subcase 2: $t-n>N_1$. Then $k-n>N_1>1$. 
	
	If $[x, s]\in V$, we show $[x', s']\in U$. We may assume $\frac{n+l}{k-n-i+1}\le \frac{N_2}{N_1+2}$ (otherwise, the same argument as in Subcase 1 will imply $[x', s']\in U$). Then
	$$\frac{n}{k-n-j+1}\le\frac{n+l}{k-n-i+1}\cdot\frac{k-n-i+1}{k-n-j+1}\le\frac{N_2}{N_1+2}\cdot\frac{k-n+1}{k-n}\le\frac{2N_2}{N_1+2}.$$
	This together with $s<2^{-\frac{4N_2}{N_1+2}}s_2$ implies
	$$2^{\frac{n}{k-n-j+1}}s<2^{\frac{4N_2}{N_1+2}}s<s_2.$$
	Also $[x, s]\in G\cdot K\subseteq\pi(G\cdot L\times[0, 1])$.
	Hence $[x, 2^{\frac{n}{k-n-j+1}}s]\in\pi\big(G\cdot L\times[0, s_2]\big)\subseteq V_1$.  Thus $[x', 2^{\frac{n+l}{k-n-i+1}}s']\in U_1=U\cap\pi\big(X\times[0, s_1)\big)$. Therefore $s'\le 2^{\frac{n+l}{k-n-i+1}}s'<s_1$. Together with $[x', s']\in G\cdot K\subseteq\pi\big(G\cdot L\times[0, 1]\big)$, it follows that $[x', s']\in\pi\big(G\cdot L\times[0, s_1]\big)$. Hence $[x', s']\in U$.
	Similar argument shows that if $[x', s']\in V$, then $[x, s]\in U$.  
	
	
	\vskip 10pt
	\noindent
	Case 2: $s_0>0$. Then $G_{[x_0, s_0]}$=$G_{x_0}$. For any $r>0$ and $s\in[0, 1]$, let $B_{r}(s)$ be the open ball of radius $r$ and center $s$ in $[0, 1]$.
	By choosing a smaller open neighborhood of $[x_0, s_0]$, we may assume $U=\pi\big(W\times B_{r}(s_0)\big)$ for some $G_{x_0}$-invariant open neighborhood $W$ of $x_0$ in $X$ and $0<r<\frac{1}{8}s_0$. For any $s_0-r\le s \le 1$, let $U_s=\pi\big(W\times B_{\frac{r}{4}}(s)\big)$. There exist $N_s>0, 0<r_s<\frac{r}{4}$ and a $G_{x_0}$-invariant open neighborhood $W_{s}\subseteq W$ of $x_0$ in $X$, so that if $([x', s'], [x'', s''], t)\in E_1$ and $t>N_s$, then one of $[x', s'], [x'', s'']$ lies in $\pi\big(W_{s}\times B_{r_s}(s)\big)$
	implies the other lies in $U_s$. Since $[s_0-r, 1]$ is compact, there exist $s_1, s_2, \cdots, s_m\in[s_0-r, 1]$ so that $[s_0-r, 1]\subseteq\bigcup_{p=1}^mB_{r_{s_p}}(s_p)$. Let $\delta=\min\{r_{s_1}, \cdots, r_{s_m}\}$, $W_{x_0}=\bigcap_{p=1}^{m}W_{s_p}$. Choose $N_0>1$ so that $(\frac{2}{s_0})^{\frac{1}{N_0}}-1<\frac{1}{32}rs_0$ and $1-(\frac{2}{s_0})^{-\frac{1}{N_0}}<\frac{1}{32}rs_0$. Let $N_1=N_0+N_{s_1}+\cdots N_{s_m}$. Choose $N_2>0$ so that $2^{\frac{N_2}{N_1+2}}>\frac{2}{s_0}$. 
	
	Now let $V=\pi\big(W_{x_0}\times B_{\delta}(s_0)\big)$ and $N=N_1+N_2$. Assume $([x', s'],[x, s], t)\in E$, $t>N$ and $[x, s]\in V$, we show $[x', s']\in U$. By definition, we have $[x', s'], [x, s]\in G\cdot K$ and there exist $n\in\mathbb Z,\ i,j, l\in\{0, 1\}$ so that $t\ge n\ge 0$ and $
	\big([x', 2^{\frac{n+l}{k-n-i+1}}s'], [x, 2^{\frac{n}{k-n-j+1}}s], t-n\big)\in E_1, \text{where}\ k\in\mathbb N\ \text{s.t.}\ k-1\le t< k$. By definition of $V$, we have $s>s_0-\delta>s_0-\frac{1}{4}r>\frac{1}{2}s_0$. Hence 
	$$2^{\frac{n}{k-n-j+1}}\cdot\frac{s_0}{2}<2^{\frac{n}{k-n-j+1}}s\le 1$$
	$$2^{\frac{n}{k-n-j+1}}s\ge s>s_0-r$$
	So $2^{\frac{n}{k-n-j+1}}<\frac{2}{s_0}$ and $2^{\frac{n}{k-n-j+1}}s\in B_{r_{s_p}}(s_p)$ for some $1\le p\le m$. This implies $[x, 2^{\frac{n}{k-n-j+1}}s]\in\pi\big(W_{s_p}\times B_{r_{s_p}}(s_p)\big)$ and $t-n>N_1$ (otherwise $k-n\le N_1+1$, $n\ge N_2$ and $2^{\frac{n}{k-n-j+1}}\ge 2^{\frac{N_2}{N_1+2}}>\frac{2}{s_0}$).  Therefore $[x', 2^{\frac{n+l}{k-n-i+1}}s']\in U_{s_p}=\pi(W\times B_{\frac{r}{4}}(s_p))$. Hence $x'\in W$ and
	\begin{align*}
    |s'-s_0|
	&\le |s'-s|+|s-s_0|\\
	&<|2^{\frac{n+l}{k-n-i+1}}s'-2^{\frac{n+l}{k-n-i+1}}s|+\delta\\
	&<|2^{\frac{n+l}{k-n-i+1}}s'-s_p|+|s_p-2^{\frac{n}{k-n-j+1}}s|+|2^{\frac{n}{k-n-j+1}}s-2^{\frac{n+l}{k-n-i+1}}s|+\frac{1}{4}r\\
	&<\frac{1}{4}r+r_{s_p}+|2^{\frac{n}{k-n-j+1}}-2^{\frac{n+l}{k-n-i+1}}|+\frac{1}{4}r\\
	&<2^{\frac{n}{k-n-j+1}}\cdot\big|1-2^{\frac{n+l}{k-n-i+1}-\frac{n}{k-n-j+1}}\big|+\frac{3}{4}r\\
	&<\frac{2}{s_0}\cdot\big|1-2^{\frac{l}{k-n-i+1}}\big|+\frac{2}{s_0}\cdot 2^{\frac{l}{k-n-i+1}}\cdot\big|1-2^{\frac{n}{k-n-i+1}-\frac{n}{k-n-j+1}}\big|+\frac{3}{4}r\\
	&\le\frac{2}{s_0}\cdot(2^{\frac{1}{N_0}}-1)+\frac{2}{s_0}\cdot 2^{\frac{1}{N_0}}\cdot\big|1-2^{\frac{n(i-j)}{(k-n-j+1)(k-n-i+1)}}\big|+\frac{3}{4}r\\
	&\le\frac{2}{s_0}\cdot\frac{1}{32}rs_0+\frac{4}{s_0}\cdot\max\{(\frac{2}{s_0})^{\frac{1}{N_0}}-1,\ 1-(\frac{2}{s_0})^{-\frac{1}{N_0}}\}+\frac{3}{4}r\\
	&<\frac{1}{16}r+\frac{4}{s_0}\frac{1}{32}rs_0+\frac{3}{4}r\\
	&<r
	\end{align*}
    It follows that $[x', s']\in\pi(W\times B_r(s_0))=U$. Similar argument  shows that if $[x', s']\in V$, then $[x, s]\in U$. This completes the proof of Case 2, and therefore the proof of the Lemma.
\end{proof}

\section{Proof of the Main Theorem}\label{proof_of_the_main_theorem}

We are now ready to prove Theorem \ref{eqkhomology}.

\begin{proof}
	(1) Functoriality: let $\phi: (X, A)\rightarrow (Y, B)$ be a $G$-equivariant map between $G$-$CW$-pairs.	
	By Definition \ref{localalg}, we have 
	$$C_L^*(G\times X)=\ds\text{colim}_{\Lambda\subseteq X}C_L^*(G\times X; \Lambda)$$
	where the direct colimit is taking over all $G$-invariant countable subsets of $X$ under inclusions.
	
	Let $\Lambda\subseteq X$ be a $G$-invariant countable subset. For each $y\in Y$, let $\Lambda_y=\Lambda\cap\phi^{-1}(y)$. Since $\Lambda_y$ is countable, there exists an isometry 
	$$W_y: \mathcal H_{\Lambda_y}\longrightarrow \mathcal H_y$$
	This gives rise to an isometry 
	$$W_{\Lambda}=\oplus_{y\in Y} W_y: \mathcal H_\Lambda\longrightarrow \mathcal H_{Y}$$
	It uniquely determines a $G$-invariant isometry 
	$$V_{\Lambda}: \mathcal H_{G\times\Lambda}\longrightarrow \mathcal H_{G\times Y}$$
	so that $V_\Lambda(\mathcal H_{\{g\}\times\Lambda})\subseteq\mathcal H_{\{g\}\times Y}$ and its restriction to $\mathcal H_{\{e\}\times \Lambda}$ is equal to $W_\Lambda$. Note that $V_\Lambda$ covers $id_G\times\phi$.
	
	For any $f\in C_L[G\times X;\Lambda]$, define $\text{Ad}(V_\Lambda)(f)$ by
	$$\text{Ad}(V_\Lambda)(f)(t):=V_\Lambda\circ f(t)\circ V_\Lambda^*, \ t\geq 0$$
	We claim $\text{Ad}(V_\Lambda)(f)\in C_L[G\times Y]$. The only nontrivial part is to show there exists $E_Y\in\mathscr E^G_Y$ so that
	$\text{Ad}(V_\Lambda)(f)(t)_{(g', y'),(g, y)}\ne 0$ implies $(y', y, t)\in E_Y$. As $V_\Lambda$ covers $\phi$, if $\text{Ad}(V_\Lambda)(f)(t)_{(g', y'),(g, y)}\ne 0$, then there exist
	$x', x\in X$ with $f(t)_{(g', x'), (g, x)}\ne 0$ so that $(y', y, t)=(\phi(x'), \phi(x), t)$. Therefore, together with the $G$-cocompact support condition, it suffices to show for any $E_X\in\mathscr E^G_X$ and compact set $K\subseteq X$, the set
	$$E_Y=\{(\phi(x'), \phi(x), t)\ |(x', x, t)\in E_X, x', x\in G\cdot K\}\cup\{(y, y, t)\ |y\in Y,\ t\ge 0\}$$
	belongs to $\mathscr E^G_Y$. But this follows from part (3) of Lemma \ref{ecc_properties}.
    Using this fact, we also see that if in addition $f\in C_L[G\times X; \langle A\rangle]$, then $\text{Ad}(V_\Lambda)(f)\in C_L[G\times Y; \langle B\rangle]$.
	Therefore, we have a  $C^*$-algebra homomorphism
	$$\text{Ad}(V_\Lambda): C^*_L(G\times X; \Lambda)\longrightarrow C^*_L(G\times Y)$$
	which maps $C^*_L(G\times X; \langle A\rangle, \Lambda)$ to $C_L^*(G\times Y; \langle B\rangle)$, where $C^*_L(G\times X; \langle A\rangle, \Lambda)$ is the closure of $C_L[G\times X; \langle A\rangle, \Lambda]=C_L[G\times X; \langle A\rangle]\cap C_L[G\times X; \Lambda]$ in $C^*_L(G\times X)$.  Hence, we have an induced homorphism
	$$\overline{\text{Ad}(V_\Lambda)}: C^*_L(G\times X;\Lambda)/C^*_L(G\times X; \langle A\rangle,\Lambda)\longrightarrow C^*_L(G\times Y)/C^*_L(G\times Y; \langle B\rangle)$$
	Using Lemma \ref{conjugate}, it is not hard to show that on the level of $K$-groups, $\overline{\text{Ad}(V_\Lambda)}_*$ is independent of the choice of $V_\Lambda$.	
	As $C^*_L(G\times X;\langle A\rangle, \Lambda)=C^*_L(G\times X;\langle A\rangle)\cap C^*_L(G\times X; \Lambda)$ and
	$$C^*_L(G\times X;\Lambda)/C^*_L(G\times X;\langle A\rangle)\cap C^*_L(G\times X; \Lambda)\cong\big(C^*_L(G\times X;\Lambda)+C^*_L(G\times X; \langle A\rangle)\big)/C^*_L(G\times X; \langle A\rangle)$$ 
	we obtain a homomorphism (which is still denoted by $\overline{\text{Ad}(V_\Lambda)}$)
	$$\overline{\text{Ad}(V_\Lambda)}: \big(C^*_L(G\times X;\Lambda)+C^*_L(G\times X; \langle A\rangle)\big/C^*_L(G\times X; \langle A\rangle)\longrightarrow C^*_L(G\times Y)/C^*_L(G\times Y; \langle B\rangle)$$
	Since $\overline{\text{Ad}(V_\Lambda)}_*$ is independent of the choice of $V_\Lambda$, we have, for $\Lambda_1\subseteq\Lambda_2$
	$$(\overline{\text{Ad}(V_{\Lambda_2})}\circ j_{\Lambda_1\rightarrow\Lambda_2})_*=\overline{\text{Ad}(V_{\Lambda_1})}_*$$
	where $j_{\Lambda_1\rightarrow\Lambda_2}$ is the inclusion induced by the inclusion of $C^*_L(G\times X; \Lambda_1)$ in $C^*_L(G\times X; \Lambda_2)$. Thus $\overline{\text{Ad}(V_\Lambda)}_*$ induces a homomorphism
	$$\text{colim}_{\Lambda\subseteq X}(\overline{\text{Ad}(V_\Lambda)}_*): K_*\big(C_L^*(G\times X)/C^*_L(G\times X;\langle A\rangle)\big)\longrightarrow K_*\big(C^*_L(G\times Y)/C^*_L(G\times Y; \langle B\rangle)\big)$$
	We define $\phi_*: LK^G_*(X, A)\longrightarrow LK^G_*(Y, B)$ to be this homomorphism. It is not hard to check that $id_*=id$ and $(\phi\circ\psi)_*=\phi_*\circ\psi_*$.

	\vskip 10pt
	\noindent
	(2) Exactness: let $(X, A)$ be a $G$-CW-pair. Then $A$ is a closed subspace of $X$. Hence by Lemma \ref{ideal}, we have $LK^G_*(A)\cong K_*\big(C^*_L(G\times X; \langle A\rangle)\big)$. As $C^*_L(G\times X; \langle A\rangle)\subseteq C^*_L(G\times X)$ is an ideal, the exactness follows.  
	
	\vskip 10pt
	\noindent
	(3) $G$-homotopy invariance: by functoriality, it suffices to show the inclusion $\iota: (X, A)\rightarrow (X\times[0, 1], A\times[0, 1]),\ \iota(x)=(x, 0)$ induces an isomorphism $\iota_{*}: LK^G_*(X, A)\longrightarrow LK^G_*(X\times[0, 1], A\times[0, 1])$. Using exactness of $LK^G_*$, it suffices to show  $\iota_*: LK^G_*(X)\longrightarrow LK^G_*(X\times[0, 1])$ is an isomorphism.

	Let $Y=X\times [-1, 1]/X\times\{-1\}, Y_1=X\times[0, 1], Y_2=X\times [-1, 0]/X\times\{-1\}$. Then $Y_1\cup Y_2=Y,\ Y_1\cap Y_2=X\times\{0\}$. By Lemma \ref{coarseexcisive}, $(Y, Y_1, Y_2)$ is coarsely excisive. Hence by Corollary \ref{excision}, we have 
	$LK^G_*(Y_1, Y_1\cap Y_2)\cong LK^G_*(Y, Y_2)$. By exactness, we see that $\iota_*: LK^G_*(X)\longrightarrow LK^G_*(X\times[0, 1])$ is an isomorphism if and only if the inclusion of $Y_2$ into $Y$ induces an isomorphism $LK^G_*(Y_2)\cong LK^G_*(Y)$. 
	
	Now consider the evaluation maps
	$$ev_{Y_2}: C^*_L(G\times Y_2)\longrightarrow C^*(G\times Y_2)$$
	$$ev_{Y}: C^*_L(G\times Y)\longrightarrow C^*(G\times Y)$$
	By Lemma \ref{lemstr2}, the inclusion of $Y_2$ in $Y$ induces an isomorphism between the $K$-groups of $C^*(G\times Y_2)$ and $C^*(G\times Y)$. As $Y$ and $Y_2$ are $G$-homeomorphic to the cone $\mathcal CX=X\times[0, 1]/X\times\{0\}$, the $K$-groups of $C^*_{L, 0}(G\times Y_2)$ and $C^*_{L, 0}(G\times Y)$ vanish by Lemma \ref{conevanish}, which implies $LK^G_*(Y_2)\cong LK^G_*(Y)$.

	\vskip 10pt
	\noindent
	(4) Excision: if $A\cap B=\emptyset$, then by Corollary \ref{excision}, we get $j_*: LK^G_*(A, A\cap B)\longrightarrow LK^G_*(X, B)$ is an isomorphism. If $A\cap B\ne\emptyset$, then it is not clear whether $(X, A, B)$ is always coarsely excisive. We proceed as follows. Consider the $G$-CW subcomplexes $Y_1=A\times\{0\}\cup\big((A\cap B)\times[0, \frac{1}{2}]\big)$, $Y_2=B\times\{1\}\cup\big((A\cap B)\times[\frac{1}{2}, 1]\big)$, $Y=Y_1\cup Y_2$ of $X\times[0, 1]$. By Lemma \ref{coarseexcisive}, the triad $(Y, Y_1, Y_2)$ is coarsely excisive. Therefore the inclusion induces an isomorphism $LK^G_*(Y_1, Y_1\cap Y_2)\cong LK^G_*(Y, Y_2)$ by Corollary \ref{excision}. Let $Z=X\times\{0\}\cup(B\times[0, 1])$ and $r: Z\longrightarrow X\times\{0\}$ be the obvious retraction which is a $G$-homotopy equivalence. The restrictions of $r$ give rise to $G$-maps $r_1: (Y_1, Y_1\cap Y_2)\longrightarrow\big(A\times\{0\}, (A\cap B)\times\{0\}\big)$ and $r_2: (Y, Y_2)\longrightarrow(X\times\{0\}, B\times\{0\})$. It is clear that $r_1: Y_1\longrightarrow A\times\{0\},\ r_1: Y_1\cap Y_2\longrightarrow (A\cap B)\times\{0\}$ and $r_2: Y_2\longrightarrow B\times\{0\}$ are $G$-homotopy equivalences. We claim $r_2: Y\longrightarrow X\times\{0\}$ is also a $G$-homotopy equivalence. This follows from the fact that  $Y$ is a $G$-equivariant deformation retract of $Z$. Such a deformation retract can be obtained by exactly the same way as that of \cite[Proposition 0.16]{HA}. Therefore, $r_2: Y\longrightarrow X\times\{0\}$, as the composition of two $G$-homotopy equivalences, is also a $G$-homotopy equivalences. These facts together with the exactness and homotopy invariance of $LK^G_*$ imply that $(r_1)_*: LK^G_*(Y_1, Y_1\cap Y_2)\longrightarrow LK^G_*\big(A\times\{0\}, (A\cap B)\times\{0\}\big)$ and $(r_2)_*: LK^G_*(Y, Y_2)\longrightarrow LK^G_*(X\times\{0\}, B\times\{0\})$ are isomorphims, from which the excision follows. 
	
	\vskip 10pt
	\noindent
	(5) Additivity: first assume $X$ is an arbitrary $G$-CW complexes. Suppose $\{X_\beta: \beta\in J\}$ is a directed system of $G$-CW subcomplexes of $X$ directed by inclusions and assume $X=\bigcup_{\beta\in J}X_\beta$.  For any $f\in C_L[G\times X]$, since $f$ has $G$-compact support, we see that there exists some $X_\beta$ so that $f\in C_L[G\times X; X_\beta]$ (a compact subset of a CW-complex is contained in a finite subcomplex). This implies that $C^*_L(G\times X)=\text{colim}_{\beta\in J}C^*_L(G\times X; X_\beta)$. Together with the fact that each subcomplex $X_\beta$ is a closed subspace of $X$ and Lemma \ref{ideal}, we get $LK^G_*(X)=\text{colim}_{\beta\in J}LK^G_*(X_\beta)$. 
	
	Now suppose $X=\coprod_{\alpha\in I} X_\alpha$.  If $I$ is a finite set, then the additivity is a formal consequence of the exactness and excision of $LK^G_*$. The general case can then be obtained by writing $X$ as the directed union of those subcomplexes which are finite unions of the subcomplexes $X_\alpha$ and the fact that $LK^G_*$ commutes with directed colimit.

    \vskip 10 pt
	\noindent
	(6) Value at $G/H$: we first show that for any $f\in C_L[G\times G/H]$, there exists $t_f\geq 0$, so that for any $t>t_f$, if $x\ne x'$, then $f(t)_{(g',x'),(g, x)}=0$. Choose sets $S\subseteq G, K\subseteq G/H$ and $E\in\mathscr E^G_{G/H}$ as in Definition \ref{localalg} for $f$. For any $x\in G/H$, since $G/H$ is discrete, by definition of $E$, there exists $t_x>0$, so that for any $t>t_x$, $(x, x', t)\in E$ implies $x'=x$. Therefore for any $t>t_x$,  $f(t)_{(g',x'), (g, x)}\neq 0$ or $f(t)_{(g,x), (g', x')}\neq 0$ for some $g, g'\in G$ implies $x=x'$. Now since $K$ is compact and discrete, it is finite, say $K=\{x_1, \cdots, x_m\}$. 
	Let $t_f=\text{max}\{t_{x_1}, \cdots, t_{x_m}\}$. For any $t>t_f$, suppose $f(t)_{(g', x'), (g, x)}\neq 0$, then there exists $x_i$, so that $g^{-1}x=x_i$. By $G$-invariance of $f(t)$, we see that $f(t)_{(g^{-1}g', g^{-1}x'), (e, x_i)}\neq 0$. This implies $g^{-1}x'=x_i$. Therefore $x=x'$.
	
	Now for any $\alpha\geq 0$, let $C_L[G\times G/H; \alpha]$ be the $*$-algebra of all $f\in C_L[G\times G/H]$ so that  $t\geq\alpha, x\neq x'$ implies $f(t)_{(g', x'), (g, x)}=0$ for any $g, g'\in G$. According to the previous paragraph, we have $C_L[G\times G/H]=\bigcup_{\alpha\ge 0}C_L[G\times G/H;\alpha]$. Therefore $C^*_L(G\times G/H)=\text{colim}_{\alpha\ge 0}C^*_L(G\times G/H; \alpha)$, where $C^*_L(G\times G/H; \alpha)$ is the closure of $C_L[G\times G/H; \alpha]$, and the colimit is taking under the inclusion $C^*_L(G\times G/H; \alpha)\subseteq C^*_L(G\times G/H; \beta)$ for any $\alpha\le\beta$. Hence $K_i\big(C^*_L(G\times G/H)\big)\cong \text{colim}_{\alpha\ge 0}K_i\big(C^*_L(G\times G/H; \alpha)\big), i=1, 2$. It is not hard to see that every inclusion $C^*_L(G\times G/H; \alpha)\hookrightarrow C^*_L(G\times G/H; \beta)$ is a homotopy equivalence with a homotopy inverse given by a shift. Therefore $K_i\big(C^*_L(G\times G/H)\big)\cong K_i\big(C^*_L(G\times G/H; 0)\big), i=1, 2$. By Lemma \ref{lemmorecontrol}, it suffices to show $K_i\big(C^*_L(G\times G/H; 0)\big)\cong K_i(C^*(G\times G/H; 0)), i=1, 2$.

    There is an evaluation map $r: C_L[G\times G/H; 0]\rightarrow C[G\times G/H; 0] $ by $r(f)=f(0)$, which is clearly a surjective $*$-algebra homomorphism that is norm-non-increasing. Therefore it extends to a surjective homomorphism $\bar{r}: C_L^*(G\times G/H;0)\rightarrow C^*(G\times G/H; 0)$. Let $C_{L, 0}^*(G\times G/H; 0)$ denote the kernel of $\bar r$. We show its $K$-groups vanish, and this will complete the proof.
   

	The method of proof is very similar to that of Lemma \ref{conevanish}. We will construct an Eilenberg swindle on $C_{L, 0}^*(G\times G/H; 0)$. The construction of the swindle here is much simpler as elements of $C_{L, 0}^*(G\times G/H; 0)$ have their supports, in the $G/H$ direction, on the diagonal of $G/H\times G/H$. Hence the continuous control condition  will always be satisfied when we shift $f\in C_{L, 0}^*(G\times G/H; 0)$ in $t$ towards infinity even infinitly many times (without performing any pushing in the $G/H$-direction). We will outline the main construcion, and omit most of the details as they are very similar to and much simpler than that in the proof of \ref{conevanish}.
	
	
	Define
	$$S: C_{L, 0}^*(G\times G/H; 0)\rightarrow C_{L, 0}^*(G\times G/H; 0)$$
	\begin{align*}
	S(f)(t) &=
	\begin{cases}
	0        & \text{if } 0\leq t \leq 1 \\
	f(t-1)        & \text{if } t\geq 1
	\end{cases}
	\end{align*}
	Then $S$ is a $C^*$-algebra homomorphism which is homotopic to the identity. 
	
	For the separable infinite dimensional Hilbert space $\mathcal H$, we choose a unitary isomorphism $U: \mathcal H\rightarrow\oplus_{\infty}\mathcal H$.
	For each $T\in B(\mathcal H)$, define  
	$$\sigma_i(T):=U^*\circ (0\oplus\cdots\oplus 0\oplus T\oplus 0\oplus\cdots)\circ U\in B(\mathcal H),$$
	where $T$ is in the $i$-th place, $i=1, 2, \cdots$. 
    Each $\sigma_i$ induces, for any space $X$,  a map 
    $$\Phi_i^X: B(\mathcal H_{X})\rightarrow B(\mathcal H_{X})$$
    $$\Phi^X_i(T)_{x', x}:=\sigma_i(T_{x', x}),\ \ \forall x, x'\in X.$$ 
    $\Phi_i^X$ preserves supports of operators and  maps locally compact operators to locally compact operators. 
    
    For any $f\in C_{L, 0}^*(G\times G/H; 0)$, define $$\Phi_i(f)(t):=\Phi_i^{G\times G/H}\big(S^{i-1}(f)(t)\big),\ \ t\geq 0,\ \ i=1, 2, \cdots.$$ 
    Then $\Phi_i(f)\in C_{L, 0}^*(G\times G/H; 0)$ and 
    $$\Phi_i: C_{L, 0}^*(G\times G/H; 0)\rightarrow C_{L, 0}^*(G\times G/H; 0)$$ is a $C^*$-algebra homomorphism.  $\Phi_i, \Phi_j, i\neq j$, are orthogonal to each other, i.e. $\Phi_i(f)\Phi_j(g)=0, \forall f, g\in C_{L, 0}^*(G\times G/H; 0)$. Now let 
    $$\Phi=\Phi_1,\ \ \Psi=\sum_{i=2}^{\infty}\Phi_i$$ 
    Note that for each $t\ge 0$, there exists $n\in\mathbb N$, so that $S^i(f)(t)=0$ for all $i\geq n$. Therefore $\Psi(f)$ is  still locally compact and thus an element of $C_{L, 0}^*(G\times G/H; 0)$. $\Psi$ and $\Phi+\Psi$ are $C^*$-algebra homomorphisms since $\Phi_i\perp\Phi_j, i\neq j$. Now similar and simpler arguments as in the proof of Lemma \ref{conevanish} show that, on the level of $K$-groups, $\Phi_*=id$, and $(\Phi+\Psi)_*=\Psi_*$. These together with the fact that  $(\Phi+\Psi)_*=\Phi_*+\Psi_*$ (since $\Phi\perp\Psi$, see \cite[Lemma 4.6.4]{HR}), imply the $K$-groups of $C_{L, 0}^*(G\times G/H; 0)$ vanish, and thus completes the proof.
\end{proof}

\section{Induction}\label{induction}

Let $H$ be a subgroup of $G$ and $X$ be a left $H$-space. Then the \textit{induction} of the $H$-space $X$, denoted by $G\times_HX$ is the quotient space of the space $G\times X$ by the right $H$-action $(g, x)h=(gh, h^{-1}x)$. The image of $(g, x)\in G\times X$ in $G\times_HX$ under the quotient map will be denoted by $[g, x]$. $G\times_H X$ is a left $G$-space by the action $g'[g, x]=[g'g, x]$. It is easily verified that $LK_*^G(G\times_H-)$ is an $H$-equivariant homology theory. The main result of this section is the following theorem:

\begin{thm}\label{induction_str}Let $G$ be a countable and discrete group and $H$ be a subgroup of $G$. Then there is a natural isomorphism between $H$-equivariant homology theories 
$$\eta: LK^H_*(-)\rightarrow LK^G_*(G\times_H-).$$
More precisely, for any $H$-CW-pair $(X, A)$, there is an induction isomorphism
$$\eta_{X, A}: LK^H_*(X, A)\rightarrow LK_*^G(G\times_HX, G\times_H A),$$
which is compatible with the boundary map and  natural in $(X, A)$: for any $H$-equivariant continuous map  $\phi: (X, A)\rightarrow (Y, B)$, let $\tilde{\phi}: (G\times_HX, G\times_H A)\rightarrow (G\times_HY, G\times_H B)$ be the induced map defined by $\tilde{\phi}([g, x])=[g, \phi(x)]$, then the diagram 
$$
	\xymatrix{ 
			LK^H_*(X, A)\ar[r]^{\eta_{X, A}\ \ \ \ \ \ \ \ }\ar[d]^{\phi_*} & LK^G_*(G\times_HX, G\times_HA)\ar[d]^{\tilde{\phi}_*}\\ 
		    LK^H_*(Y, B)\ar[r]^{\eta_{Y, B}\ \ \ \ \ \ \ \ } & LK^G_*(G\times_HY, G\times_HB)
		}
	$$  
commutes.
\end{thm}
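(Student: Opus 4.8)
The plan is to construct $\eta$ explicitly at the level of localization algebras and then prove it is a $K$-theory isomorphism by reducing to the structural computation already carried out for the value at $G/H$. The starting observation is that, since $G$ is discrete, the projection $G\times_H X\to G/H$, $[g,x]\mapsto gH$, is continuous onto a discrete space, so $G\times_H X=\coprod_{\lambda}X_\lambda$ is a disjoint union of clopen copies $X_\lambda\cong X$ indexed by a set of left coset representatives $\{g_\lambda\}$, with $G$ permuting the copies and acting through $H$ inside the copy $X_e$ attached to the trivial coset. The $H$-equivariant embedding $X\hookrightarrow G\times_H X$, $x\mapsto[e,x]$, induces an isometry $V\colon\mathcal H_{H\times X}\to\mathcal H_{G\times(G\times_H X)}$ onto the summand indexed by $\{(h,[e,x])\}$, and I would define, for $f\in C_L[H\times X]$,
\[
\eta(f)(t)=\sum_{\lambda}L_{g_\lambda}\,V f(t)V^{*}\,L_{g_\lambda}^{*},
\]
where $L_g$ is the left translation unitary on $\mathcal H_{G\times(G\times_H X)}$. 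The summands have pairwise disjoint supports over the copies $X_\lambda$ and $V^{*}L_{g_\lambda}^{*}L_{g_\mu}V=\delta_{\lambda\mu}\,\mathrm{id}$, so the sum converges strongly and, using the $H$-equivariance of $V$ and the $H$-invariance of $f(t)$, one checks that $\eta$ is an injective $*$-homomorphism landing in the \emph{within-copy} subalgebra, that $\eta(f)$ is $G$-invariant, and that it satisfies the conditions of Definitions \ref{Roealg} and \ref{localalg}. The one nonformal point is transporting a controlling set $E\in\mathscr E^H_X$ to one in $\mathscr E^G_{G\times_H X}$; since the copies are clopen and $G$ only permutes them, this reduces to the behaviour of $E$ inside a single copy and is handled as in Lemma \ref{ecc_properties}. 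As $\eta$ also carries $C_L[H\times X;\langle A\rangle]$ into $C_L[G\times(G\times_H X);\langle G\times_H A\rangle]$, passing to closures and quotients yields $\eta_{X,A}$ on the relative groups.

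Next I would show $\eta_X\colon C^{*}_L(H\times X)\to C^{*}_L(G\times(G\times_H X))$ is a $K$-equivalence in two steps, both modelled on arguments already in the paper. First, let $C^{*}_L(G\times(G\times_H X);0)$ be the subalgebra of operators whose nonzero components join points of a common copy $X_\lambda$. Because the copies are clopen and disjoint, condition (1) of Definition \ref{ecc} forces any $E\in\mathscr E^G_{G\times_H X}$ to exclude cross-copy pairs for large $t$, so every $f\in C_L[G\times(G\times_H X)]$ is within-copy for $t$ beyond some $t_f$; writing the algebra as a colimit over these cut-offs and applying the shift homotopy exactly as in the computation of the value at $G/H$ in the proof of Theorem \ref{eqkhomology} (compare Lemma \ref{excisivetriad}(3)) shows the inclusion $C^{*}_L(G\times(G\times_H X);0)\hookrightarrow C^{*}_L(G\times(G\times_H X))$ is a $K$-isomorphism. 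Second, a $G$-invariant within-copy operator is determined by its block on $X_e$, an $H$-invariant operator on $\mathcal H_{G\times X}$; decomposing $G=\coprod_\nu Hg'_\nu$ into right cosets identifies such operators, together with their $(S,K,\Lambda,E)$-controlled structure, with operators on $\mathcal H_{H\times(H\backslash G)\times X}$ that are $H$-invariant and trivial in the $H\backslash G$ direction, precisely as in Lemma \ref{lemmorecontrol}. Under this identification $\eta_X$ is the inclusion of the $(H,H)$-corner of an $H\backslash G$-stabilization of $C^{*}_L(H\times X)$, which is a $K$-isomorphism by the standard corner result. Composing the two steps shows $\eta_X$ is a $K$-isomorphism; applying the same to $A$ and invoking the five lemma on the six-term sequences yields that $\eta_{X,A}$ is an isomorphism.

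Finally, naturality and compatibility with boundary maps are formal consequences of the construction. For an $H$-map $\phi\colon(X,A)\to(Y,B)$, the functoriality homomorphisms of Theorem \ref{eqkhomology} are induced by isometries covering $\phi$ (respectively $\tilde\phi$), and conjugating such an isometry by the coset-spreading defining $\eta$ produces an isometry covering $\tilde\phi$; the square commutes on $K$-theory because the induced map on the quotient is independent of the chosen isometry, by Lemma \ref{conjugate}. Compatibility with $\partial$ holds because $\eta$ is a morphism of the short exact sequences of $C^{*}$-algebras defining the relative groups, hence intertwines the associated six-term sequences.

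I expect the main obstacle to be the second step of the isomorphism argument: generalizing Lemma \ref{lemmorecontrol} from the one-point space to an arbitrary $G$-CW complex $X$ while carrying along the $t$-dependent, $(S,K,\Lambda,E)$-controlled structure, and verifying that the $H\backslash G$-stabilization is compatible with these control conditions so that the corner inclusion is genuinely a $K$-isomorphism. The transport of controlling sets from $\mathscr E^H_X$ to $\mathscr E^G_{G\times_H X}$ in the construction of $\eta$, while less deep, is the other place demanding care.
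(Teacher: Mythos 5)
Your construction of $\eta$ is the same as the paper's: the operator $\sum_\lambda L_{g_\lambda}Vf(t)V^*L_{g_\lambda}^*$ is exactly the paper's $\Omega(f)(t)=\bigoplus_i f(t)^{g_i}$, the coset-spreading of the block supported over the copy $X_e$, and your verifications (well-definedness, $G$-invariance, transport of control sets, preservation of the $\langle A\rangle$-ideals, naturality via Lemma \ref{conjugate}) match the paper's. Where you genuinely diverge is in proving that $\eta_X$ is a $K$-isomorphism. The paper does \emph{not} attack general $X$ directly: since $\eta$ is a natural transformation between two $H$-equivariant homology theories, it reduces (by cell induction, Mayer--Vietoris, additivity and a colimit argument) to the orbit case $(X,A)=(H/I,\emptyset)$, where the within-copy reduction, the evaluation at $t=0$ (legitimate there because $K_*(C^*_{L,0}(\cdot\,;0))=0$), Lemma \ref{lemmorecontrol} and Lemma \ref{lemstr2} finish the job. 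You instead run the within-copy reduction and the corner-of-stabilization argument for arbitrary $X$ at the level of localization algebras, where evaluation at $t=0$ is unavailable; this is exactly the obstacle you flag, and it is real work (a $t$-parametrized version of Lemma \ref{lemmorecontrol} with the $(S,K,\Lambda,E)$-data carried along), though both of your steps do go through: the cut-off in step one works because the copies $X_{\lambda}$ are clopen and $G_a$-invariant for $a\in X_\lambda$, so condition (1) of Definition \ref{ecc} plus $G$-compact support gives a uniform $t_f$ exactly as in Lemma \ref{excisivetriad}(3) and the value-at-$G/H$ computation. One caution on step two: ``trivial in the $H\backslash G$ direction'' must mean ``no continuous-control constraint in that coordinate'' (finite matrices over $H\backslash G$ with entries in $C^*_L(H\times X)$), \emph{not} ``diagonal'' --- the within-copy condition constrains the $G\times_HX$-coordinate, whereas the $H\backslash G$-index of the translated operator comes from the group coordinate, so these operators stay genuinely off-diagonal in $H\backslash G$ for all $t$; read correctly, the corner inclusion argument is the right one. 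The trade-off: the paper's route is shorter and recycles Theorem \ref{eqkhomology}, while yours avoids the cell induction and yields the absolute statement for each $X$ at the price of redoing the control-set bookkeeping in full generality.
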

\begin{proof} We first construct a map $\Omega: C[H\times X]\rightarrow C[G\times G\times_HX]$.  For any $g\in G$, let
$X_g=\{[g, x]\in G\times_H X\ |\ x\in X\}$, and $e\in G$ be the identity element. We view $H\times X$ as a subset of $G\times X_e$ via the map $(h, x)\mapsto (h, [e, x])$. Then we have an inclusion $\iota: \mathcal H_{H\times X}\rightarrow\mathcal H_{G\times X_e}$ and a projection $\pi: \mathcal H_{G\times X_e}\rightarrow\mathcal H_{H\times X}$ of Hilbert spaces. For any $T\in C[H\times X]$, define 
$$T^e=\iota\circ T\circ\pi: \mathcal H_{G\times X_e}\rightarrow \mathcal H_{G\times X_e}.$$
In terms of components, we have
$$T^e_{(g', [e, x']), (g, [e, x])}=
\begin{cases}
T_{(g', x'),(g, x)}\ \ \textnormal{if}\ g', g\in H\\
0\ \ \ \ \ \ \ \ \ \ \ \ \ \ \textnormal{otherwise}
\end{cases}$$
Now for any $g\in G$, there is the bijective map $G\times X_e\rightarrow G\times X_g$ defined by $(r, [e, x])\mapsto(gr, [g, x])$. It induces a unitary isomorphism $l_g: \mathcal H_{G\times X_e}\rightarrow\mathcal H_{G\times X_g}$. Define $$T^g=l_g\circ T^e\circ l_g^{-1}:\mathcal H_{G\times X_g}\rightarrow\mathcal H_{G\times X_g}.$$
In terms of components, we have 
$$T^{g}_{(a', [g, x']), (a, [g, x])}=T^e_{(g^{-1}a', [e, x']), (g^{-1}a, [e, x])},\ a, a'\in G, x, x'\in X.$$
Notice if $g, s\in G$ so that  $X_{g}=X_{s}$, then $T^{g}=T^{s}$. To see this, since $X_{g}=X_{s}$, there exists $h\in H$ so that $gh=s$. Then
\begin{align*}
T^{s}_{(a', [g, x']), (a, [g, x])}&=T^{s}_{(a', [sh^{-1}, x']), (a, [sh^{-1}, x])}\\
&=T^{s}_{(a', [s, h^{-1}x']), (a, [s, h^{-1}x])}\\
&=T^{e}_{(s^{-1}a', [e, h^{-1}x']),(s^{-1}a, [e, h^{-1}x])}\\
&=T^{e}_{(h^{-1}g^{-1}a', [e, h^{-1}x']),(h^{-1}g^{-1}a, [e, h^{-1}x])}
\end{align*}
The above operator is non-zero only if both  $g^{-1}a', g^{-1}a\in H$. In this case, we have
\begin{align*}
T^{s}_{(a', [g, x']), (a, [g, x])}&=T^{e}_{(h^{-1}g^{-1}a', [e, h^{-1}x']),(h^{-1}g^{-1}a, [e, h^{-1}x])}\\
&=T_{(h^{-1}g^{-1}a', h^{-1}x'),(h^{-1}g^{-1}a, h^{-1}x)}\\
&=T_{(g^{-1}a', x'),(g^{-1}a, x)}\\
&=T^{g}_{(a', [g, x']), (a, [g, x])}
\end{align*}
Now choose a complete representatives $\{g_i\in G\ |\ i\in I \}$ for the left coset space $G/H$. Then $\mathcal H_{G\times G\times_H X}=\ds\bigoplus_{i\in I}\mathcal H_{G\times X_{g_i}}$. We define $\Omega: C[H\times X]\rightarrow C[G\times G\times_HX]$ by
$$\Omega(T)=\ds\bigoplus_{i\in I}T^{g_i}: \mathcal H_{G\times G\times_H X}\rightarrow \mathcal H_{G\times G\times_H X}.$$
$\Omega(T)$ is independent of the choices of $g_i, i\in I$.
We check $\Omega(T)$ is indeed an element of $C[G\times G\times_HX]$.
\begin{enumerate}
\item $\Omega(T)$ is bounded:  since it is easily seen that $\|\Omega(T)\|=\|T\|$;
\item $\Omega(T)$ is $G$-invariant: $\forall (g, [g_i, x]), (g', [g_i, x'])\in G\times G\times_H X$ and $r\in G$, 
there exist unique $g_j, j\in I$ and $h\in H$ so that $rg_i=g_jh$, then
\begin{align*}
\Omega(T)_{(rg', [rg_i, x']),(rg, [rg_i, x])}&=\Omega(T)_{(rg', [g_jh, x']),(rg, [g_jh, x])}\\
&=\Omega(T)_{(rg', [g_j, hx']),(rg, [g_j, hx])}\\
&=T^{g_j}_{(rg', [g_j, hx']),(rg, [g_j, hx])}\\
&=T^{e}_{(g_j^{-1}rg', [e, hx']),(g_j^{-1}rg, [e, hx])}\\
&=T^{e}_{(hg_i^{-1}g', [e, hx']),(hg_i^{-1}g, [e, hx])}\\
&=T^e_{(g_i^{-1}g', [e, x']),(g_i^{-1}g, [e, x])}\\
&=T^{g_i}{(g', [g_i, x']),(g, [g_i, x])}\\
&=\Omega(T)_{(g', [g_i, x']),(g, [g_i, x])}
\end{align*}
which shows $\Omega(T)$ is $G$-invariant;
\item $\Omega(T)$ is locally compact: obvious;
\item $\Omega(T)$ is $G$-finite and has finite propagation in the $G$-direction: there exist a finite subset $F\subseteq X$ and a finite subset $S\subseteq H$  so that if $T_{(h', x'), (h, x)}\ne 0$, then $h^{-1}x, (h')^{-1}x'\in F$ and $h^{-1}h'\in S$. Let $$F_e=\{[e, x]\in G\times_HX\ |\ x\in F\},$$
which is a finite subset of $G\times_HX$. If 
$$\Omega(T)_{(g', [g_i, x']),(g, [g_i, x])}=T^e_{(g_i^{-1}g', [e, x']),(g_i^{-1}g, [e, x])}\ne 0,$$ 
then $g_i^{-1}g', g_i^{-1}g\in H$ and $T^e_{(g_i^{-1}g', [e, x']),(g_i^{-1}g, [e, x])}=T_{(g_i^{-1}g', x'),(g_i^{-1}g, x)}\ne 0$. Hence $g^{-1}g_ig_i^{-1}g'=g^{-1}g'\in S$ and  
$(g')^{-1}g_ix', g^{-1}g_ix\in F$, which implies 
$$[(g')^{-1}g_i, x']=[e, (g')^{-1}g_ix'],\ [g^{-1}g_i, x]=[e, g^{-1}g_ix]\in F_e.$$
This shows $\Omega(T)$ is $G$-finite and has finite propagation in the $G$-direction.

\end{enumerate}

Altogether, we have shown that $\Omega(T)\in C[G\times G\times_HX]$. 
Now for any $f\in C_L[H\times X]$, define $\Omega(f)(t)=\Omega(f(t)), t\ge 0$ (we do  not distinguish the notations as it should be clear from the context). We check $\Omega(f)\in C_L[G\times G\times_H X]$. Clearly, $\Omega(f)$ is bounded and uniformly continuous in $t$. By definition of $f$, there exist a finite subset $S\subseteq H$, a compact subset $K\subseteq X$, an $H$-invariant countable subset $\Lambda\subseteq X$, and $E\in\mathscr E^H_X$ so that $f$ is $(S, K, \Lambda, E)$-controlled. For any $t\ge 0$, if  
$$\Omega(f(t))_{(g', [g_i, x']),(g, [g_i, x])}=f(t)^e_{(g_i^{-1}g', [e, x']),(g_i^{-1}g, [e, x])}\ne 0,$$
then $g_i^{-1}g', g_i^{-1}g\in H$ and $f(t)^e_{(g_i^{-1}g', [e, x']),(g_i^{-1}g, [e, x])}=f(t)_{(g_i^{-1}g', x'),(g_i^{-1}g, x)}\ne 0$. Therefore $g^{-1}g_ig_i^{-1}g'=g^{-1}g'\in S$, $x, x'\in\Lambda, g^{-1}g_ix, (g')^{-1}g_ix'\in K, (x, x', t)\in E$. Define
$$S_1=S$$
$$\Lambda_1=\{[g_i, x]\in G\times_H X\ |\ i\in I, x\in\Lambda\},$$
$$K_1=\{[e, x]\in G\times_HX\ |\ x\in K\},$$
$$E_1=\{([g_i, x], [g_i, x'], t)\ |\ i\in I, (x, x', t)\in E\}$$
Then $\Omega(f)$ is $(S_1, \Lambda_1, K_1, E_1)$-controlled with $S_1$ finite, $\Lambda_1$ countable and $G$-invariant  and $K_1$ compact. It remains to show $E_1\in\mathscr E^G_{G\times_H X}$. One easily checks $E_1$ is symmetric, $G$-invariant and contains the diagonal. Now fix any $[g_i, x]\in G\times_HX$. Let $U\subseteq G\times_HX$ be a $G_{[g_i, x]}$-invariant open  neighborhood  of $[g_i, x]$. Define 
$$U_i=\{y\in X\ |\ [g_i, y]\in U\}.$$
Then $U_i$ is an open neighborhood of $x\in X$. We claim $U_i$ is $H_x$-invariant. Indeed, for any $h\in H_x, y\in U_i$, 
$[g_i, hy]=[g_ih, y]=[gg_i, y]$, where $g=g_ihg_i^{-1}$. It is clear that $g\in G_{[g_i, x]}$, so $[gg_i, y]=g[g_i, y]\in U$ since $[g_i, y]\in U$ and $U$ is $G_{[g_i, x]}$-invariant. Hence $[g_i, hy]\in U$ which implies $hy\in U_i$. Therefore $U_i$ is $H_x$-invariant. Therefore since $E\in\mathcal E^H_X$, there exists $N>0$ and an $H_x$-invariant open neighborhood $V_i\subseteq U_i$ with the property that if $t>N$ and $(x, x', t)\in E$, then one of $x, x'\in V_i$ implies the other lies in $U_i$. Now let
$$V=\{[g_i, y]\in G\times_HX\ |\ y\in V_i\}.$$
Then $V\subseteq U$ is an open neighborhood of $[g_i, x]$. It is $G_{[g_i, x]}$-invariant since $\forall [g_i, y]\in V$ and $g\in G_{[g_i, x]}$, there exists $h\in H_x$, so that $g=g_ihg_i^{-1}$, hence 
$$g[g_i, y]=[gg_i, y]=[g_ih, y]=[g_i, hy]\in V.$$
Now if $t>N$, $([g_j, y], [g_j, y'], t)\in E_1$ and $[g_j, y]\in V$, then $i=j, y\in V_i$ and $(y, y', t)\in E$, so $y'\in U_i$, thus $[g_j, y']=[g_i, y']\in U$. This shows $E_1\in\mathscr E^G_{G\times_HX}$. Hence we obtain a map
$$\Omega: C_L[H\times X]\rightarrow C_L[G\times G\times_HX]$$
which is easily seen to be a homomorphism of $*$-algebras ($\Omega$ is essentially an inclusion map). 

Next we show if $f\in C_L[H\times X; \langle A\rangle]$, where $A\subseteq X$ is an $H$-invariant subcomplex, then $\Omega(f)\in C_L[G\times G\times_HX; \langle G\times_H A\rangle]$. Choose $E_2\in \mathscr E^H_X$ and compact $K_2\subseteq X$ with the following property: if $f(t)_{(h', x'), (h, x)}\neq 0$, then $h^{-1}x, (h')^{-1}x'\in K_2$ and there exist $a, a'\in A$ so that $h^{-1}a, (h')^{-1}a'\in K_2$ and $(a, x, t), (a', x', t)\in E_2$.
Define
$$K_3=\{[e, x]\in G\times_HX\ |\ x\in K_2\},$$
$$E_3=\{([g_i, a], [g_i, x], t)\ |\ i\in I, (a, x, t)\in E_2\}.$$
Then $K_3$ is compact and $E_3\in\mathscr E^G_{G\times_HX}$ (its proof is exactly the same as the one for $E_1$). If  
$$\Omega(f(t))_{(g', [g_i, x']),(g, [g_i, x])}=f(t)^e_{(g_i^{-1}g', [e, x']),(g_i^{-1}g, [e, x])}\ne 0,$$
then $g_i^{-1}g', g_i^{-1}g\in H$ and $f(t)^e_{(g_i^{-1}g', [e, x']),(g_i^{-1}g, [e, x])}=f(t)_{(g_i^{-1}g', x'),(g_i^{-1}g, x)}\ne 0$. Then $g^{-1}g_ix, (g')^{-1}g_ix'\in K_2$ and there exist $a, a'\in A$ so that $g^{-1}g_ia, (g')^{-1}g_ia'\in K_2$ and $(a, x, t), (a', x', t')\in E_2$.  Then
$$g^{-1}[g_i, x]=[e, g^{-1}g_ix], (g')^{-1}[g_i, x']=[e, (g')^{-1}g_ix']\in K_3,$$
$$g^{-1}[g_i, a]=[e, g^{-1}g_ia], (g')^{-1}[g_i, a']=[e, (g')^{-1}g_ia']\in K_3,$$
$$([g_i, a], [g_i, x], t), ([g_i, a'], [g_i, x'], t)\in E_3.$$
These together with  $[g_i, a], [g_i, a']\in G\times_HA$ completes the proof that $\Omega(f)\in C_L[G\times G\times_HX, \langle G\times_H A\rangle]$ whenever $f\in C_L[H\times X; \langle A\rangle]$.

Now as $\Omega$ preserves norms, it extends to a homomorphism of $C^*$-algebras
$$\Omega: C_L^*(H\times X)\rightarrow C_L^*(G\times G\times_HX),$$
which maps $C_L^*(H\times X; \langle A\rangle)$ to $C_L^*(G\times G\times_HX; \langle G\times_HA\rangle)$. Hence it induces a map on the quotient $C^*$-algebras
$$\bar{\Omega}: C_L^*(H\times X)/C_L^*(H\times X; \langle A\rangle)\rightarrow C_L^*(G\times G\times_HX)/C_L^*(G\times G\times_HX; \langle G\times_HA\rangle).$$
We now define the induction map 
$$\eta_{X, A}: LK^H_*(X, A)\rightarrow LK_*^G(G\times_HX, G\times_H A)$$
to be the map on $K$-groups induced by $\bar{\Omega}$. $\eta_{X, A}$ is compatible with the boundary map since the boundary map in the six-term cyclic exact sequence is natural. It can be easily verified that $\eta_{X, A}$ is natural in $(X, A)$ as the construction is canonical.

Finally, to show $\eta_{X, A}$ is an isomorphism, it suffices to show it is an isomorphism for $(X, A)=(H/I, \emptyset)$, where $I<H$ is a subgroup, since $\eta$ is a natural transformation between two homology theories. As 
$LK^H_*(H/I)\cong C_r^*(I)$, and $LK^G_*(G\times_HH/I)\cong LK^G_*(G/I)\cong C_r^*(I)$, $\eta_{H/I}: LK^H_*(H/I)\rightarrow LK^G_*(G\times_HH/I)$ is expected to be an isomorphism. A rigorous argument is as follows. As we have shown in the proof of Theorem \ref{eqkhomology}, the inclusion maps 
$$C^*_L(H\times H/I; 0)\rightarrow C^*_L(H\times H/I),$$
$$C^*_L(G\times G\times_HH/I; 0)\rightarrow C^*_L(G\times G\times_H H/I)$$
induce isomorphisms on $K$-groups, hence it suffices to show $\Omega$, viewed as a map from $C^*_L(H\times H/I; 0)$ to $C^*_L(G\times G\times_HH/I; 0)$ induces an isomorphism on $K$-groups. As we have also shown in the proof of Theorem \ref{eqkhomology}, the evaluation maps
$$C^*_L(H\times H/I; 0)\rightarrow C^*(H\times H/I; 0)$$
$$C^*_L(G\times G\times_HH/I; 0)\rightarrow C^*(G\times G\times_HH/I; 0)$$
induce isomorphisms on $K$-groups.  As they also commute with $\Omega$, it suffices to show $\Omega$, viewed as a map from $C^*(H\times H/I; 0)$ to $C^*(G\times G\times_HH/I; 0)$, induces an isomorphism on $K$-groups. Now by Lemma \ref{lemmorecontrol}, it suffices to show the inclusion map 
$$C^*(I\times I\backslash H)\rightarrow C^*(I\times I\backslash G)$$
induces an isomorphism on $K$-groups, which is true by Lemma \ref{lemstr2}. Altogether, we have completed the proof of the theorem.
\end{proof}

\section{Transitivity Principle}\label{transitivity}

In this final section, we prove Theorem \ref{transitivity_principle}, namely the \textit{transitivity principle}. The strategy of the proof is exactly the same as that of \cite[Theorem 65]{LR} and the main ingredient is the induction isomorphism proved in the previous section.  We carry out the detail for convenience.

\begin{proof}
First note that, by the fixed point set characterization of classifying spaces, $E_{\mathcal F}G\times E_{\mathcal F'}G$ with the diagonal $G$-action is a model for $E_{\mathcal F}G$. Therefore it suffices to show the map $LK^G_*(E_{\mathcal{F}}G\times E_{\mathcal F'}G)\longrightarrow LK^G_*(E_{\mathcal{F}'}G)$ induced by the projection is an isomorphism. Let $Z$ be any $G$-CW-complex with isotropy groups belonging to $\mathcal F'$, we show the map 
\begin{align}\label{iso}
LK^G_*(E_{\mathcal{F}}G\times Z)\longrightarrow LK^G_*(Z)
\end{align}
induced by the projection is an isomorphism and this will complete the proof.

If $Z$ is $0$-dimensional, then  
it is a disjoint union of $G$-spaces of the form $G/H$, where $H\in\mathcal F'$. Hence by additivity of $LK^G_*$, it suffices to show 
\begin{align}\label{iso1}
LK^G_*(E_{\mathcal{F}}G\times G/H)\longrightarrow LK^G_*(G/H)
\end{align}
is an isomorphism for any $H\in\mathcal F'$. By viewing $E_{\mathcal{F}}G$ as an $H$-space, we have $G$-homeomorphisms of $G$-CW-complexes
$$E_{\mathcal{F}}G\times G/H\cong G\times_H E_{\mathcal{F}}G,\ G/H\cong G\times_H\{pt\}.$$
Applying Theorem \ref{induction_str}, the map (\ref{iso1}) can be identified with the map
\begin{align}\label{iso2}
LK^H_*(E_{\mathcal{F}}G)\longrightarrow LK^H_*(\{pt\})
\end{align}
Notice that, by the fixed point set characterization of classifying spaces, $E_{\mathcal{F}}G$ is a model for $E_{H\cap\mathcal F}H$, hence by assumption, the map (\ref{iso2}) is an isomorphism. This proves the map (\ref{iso}) is an isomorphism when $Z$ is zero dimensional.

Assume the map (\ref{iso}) is an isomorphism for all $G$-CW complexes of dimension less than $d-1$, we prove it for $Z$ of dimension $d$. Let $Z_{d-1}$ be the $(d-1)$-th skeleton of $Z$. Consider a $G$-pushout 
$$
	\xymatrix{ 
			\ds\coprod_{i\in I}G/H_i\times S^{d-1}\ar[r]\ar[d] & Z_{d-1}\ar[d]\\ 
		    \ds\coprod_{i\in I}G/H_i\times D^d\ar[r] & Z
		}
	$$
Multiplying it by $E_{\mathcal F}G$, we obtain a $G$-pushout for $E_{\mathcal F}G\times Z$. The corresponding projection maps induce a map from the Mayer-Vietoris sequence for the $G$-pushout for $E_{\mathcal F}G\times Z$ to the Mayer-Vietoris sequence for the $G$-pushout for $Z$. By induction, it suffices to show the map
\begin{align}\label{iso3}
LK^G_*(E_{\mathcal{F}}G\times\coprod_{i\in I}G/H_i\times D^d)\longrightarrow LK^G_*(\coprod_{i\in I}G/H_i\times D^d)
\end{align}
is an isomorphism. But this follows from the additivity and $G$-homotopy invariance of $LK^G_*$, and the map (\ref{iso1}) is an isomorphism for any $H\in\mathcal F'$.

Finally, if $Z$ is infinite dimensional, then the map (\ref{iso}) is still an isomorphism by a colimit argument.
\end{proof}

\bibliographystyle{unsrt}
\bibliography{Topological_K-Homology_for_Non-proper_Actions_and_Assembly_Maps}
\end{document}